\newcommand{\doi}[1]{\href{https://doi.org/#1}{\url{doi:#1}}}
\newcommand{\myurl}[1]{\href{#1}{\url{#1}}}
\newcommand{\journaltitle}[1]{#1}
\newcommand{\booktitle}[1]{#1}
\newcommand{\myvolume}[1]{\textbf{#1}}
\newcommand{\volumeyearpages}[3]{\textbf{#1}, #3 (#2)}
\newcommand{\volumeissueyearpages}[4]{\textbf{#1}(#2), #4 (#3)}
\newcommand{\al}{\alpha}
\newcommand{\be}{\beta}
\newcommand{\bB}{\mathbb{B}}
\newcommand{\Ber}{\operatorname{Ber}}
\newcommand{\bC}{\mathbb{C}}
\newcommand{\bD}{\mathbb{D}}
\newcommand{\bH}{\mathbb{H}}
\newcommand{\bN}{\mathbb{N}}
\newcommand{\bNz}{\mathbb{N}_0}
\newcommand{\bR}{\mathbb{R}}
\newcommand{\bS}{\mathbb{S}}
\newcommand{\cA}{\mathcal{A}}
\newcommand{\cB}{\mathcal{B}}
\newcommand{\dif}{\mathrm{d}}
\newcommand{\enumber}{\operatorname{e}}
\newcommand{\iu}{\mathrm{i}}
\newcommand{\de}{\delta}
\newcommand{\Ga}{\Gamma}
\newcommand{\Mob}{\operatorname{M\ddot{o}b}}
\renewcommand{\phi}{\varphi}
\newcommand{\Om}{\Omega}
\renewcommand{\Im}{\operatorname{Im}}
\renewcommand{\Re}{\operatorname{Re}}
\newcommand{\dpar}{\partial}
\newcommand{\wt}[1]{\widetilde{#1}}
\newcommand{\conju}[1]{\overline{#1}}
\newcommand{\Be}{\operatorname{B}}
\newcommand{\cK}{\mathcal{K}}
\newcommand{\cL}{\mathcal{L}}
\newcommand{\eqdef}{\coloneqq}
\newtheorem{theorem}{Theorem}[section]
\newtheorem{proposition}[theorem]{Proposition}
\newtheorem{lemma}[theorem]{Lemma}
\newtheorem{corollary}[theorem]{Corollary}
\theoremstyle{definition}
\newtheorem{definition}[theorem]{Definition}
\newtheorem{remark}[theorem]{Remark}
\numberwithin{equation}{section}
\title{\texorpdfstring{\vspace{-2.5em}Homogeneously polyanalytic kernels\\
on the unit ball and the Siegel domain}%
{Homogeneously polyanalytic kernels on the unit ball and
the Siegel domain}}
\author{Christian Rene Leal-Pacheco,
Egor A. Maximenko,\\
Gerardo Ramos-Vazquez}
\begin{document}

\maketitle

\begin{abstract}
We prove that the homogeneously polyanalytic functions of total order $m$,
defined by the system of equations
$\overline{D}^{(k_1,\ldots,k_n)} f=0$
with $k_1+\cdots+k_n=m$,
can be written as polynomials
of total degree $<m$ in variables $\overline{z_1},\ldots,\overline{z_n}$,
with some analytic coefficients.
We establish a weighted mean value property for such functions, using a reproducing property of Jacobi polynomials.
After that, we give a general recipe to transform a reproducing kernel by a weighted change of variables.
Applying these tools, we compute the reproducing kernel of the Bergman space of homogeneously polyanalytic functions on the unit ball in $\mathbb{C}^n$ and on the Siegel domain.
For the one-dimensional case,
analogous results were obtained
by Koshelev (1977),
Pessoa (2014),
Hachadi and Youssfi (2019).

\medskip\noindent
Mathematical Subject Classification (2020):
32A25, 32K99, 30H20, 46E22, 47B32, 47B37, 33C45.








\medskip\noindent
Keywords:
polyanalytic function of several variables,
reproducing kernel,
mean value property,
Bergman space,
Jacobi polynomial,
M\"{o}bius transform, 
pseudohyperbolic distance.
\end{abstract}

\tableofcontents

\clearpage

\section{Introduction}

Bergman~\cite{Bergman1950} comprehensively studied spaces of square-integrable analytic functions on one-dimensional domains,
considering them as reproducing kernel Hilbert spaces (RKHS).
For some of multidimensional generalizations, see
\cite{FarautKoranyi1990,Vukotic1993,Zhu2005}.
Polyanalytic functions, have been attracted attention of many mathematicians since the beginning of the 20th century.
See some of their properties, applications, and history in~\cite{Abreu2010,AbreuFeichtinger2014,Balk1991,Fedorovsky2016,HaimiHedenmalm2013}.

Koshelev~\cite{Koshelev1977} proved that every integrable $m$-analytic function $f$ on $\bD$ fulfills an analog of the mean value property:
\[
f(0)
= \frac{1}{\pi}
\int_{\bD} f(z)\,P(|z|^2)\,
\dif\mu(z),
\]
where $P$ is a certain polynomial of degree $m-1$ with explicitly computed coefficients. Furthermore, he proved that
the corresponding space $\cA_m^2(\bD)$
is a RKHS and gave an explicit formula for the reproducing kernel (RK) at the arbitrary point $z_0$ of the disk, using the M\"{o}bius transormation $\phi_{z_0}$
that interchanges $z_0$ with the origin.
Due to the format of the journal, his explanation was extremely short: ``although the class of polyanalytic functions is not invariant relative to fractional-linear transformations, this device is still usefull thanks to the presence of $K_n(z,z_0)$ under the integral sign''.
Pessoa~\cite{Pessoa2014} identified $P$ with a certain shifted Jacobi polynomial and explained very clearly, how to translate the reproducing property from the origin to an arbitrary point $z_0$ of the disk.
Namely, he found a correcting factor that restores the polyanalyticity and
converts the composition operator $f\mapsto f\circ \phi_{z_0}$ into a unitary operator in $\cA_m^2(\bD)$.
He also computed~\cite{Pessoa2013}
the RK of the space $\cA_m^2(\bH_1)$ of $m$-analytic functions on the upper halfplane $\bH_1$ in $\bC$.
Hachadi and Youssfi~\cite{HachadiYoussfi2019} studied polyanalytic functions on the disk and on the entire complex plane, provided with radial measures.
In particular, they computed the RK of $\cA_m^2(\bD,\mu_\al)$, where $\dif{}\mu_\al(z)=\frac{1}{\pi}(1-|z|^2)^{\al}\,\dif\mu(z)$.

In this paper, we extend some of these results to the unit ball $\bB_n$ in $\bC^n$
and to the Siegel domain
$\bH_n\eqdef\{\xi\in\bC^n\colon\ \Im(\xi_n)>|\xi_1|^2+\dots+|\xi_n|^2\}$.

Let $\bN=\{1,2,\ldots\}$, $\bNz=\{0,1,2,\ldots\}$,
$n\in\bN$.
We employ the usual notation for the multi-indices and the notation $|\cdot|$
for the norm in $\bC^n$, see~\cite[Section~1.1]{Rudin1980}.
Given an open set $\Om$ in $\bC^n$,
a multi-index $k=(k_1,\ldots,k_n)$
in $\bNz^n$
and a function of the class $C^{|k|}(\Om)$,
we denote by $\conju{D}^k f$ the Wirtinger derivative of $f$ of the order $k$ (such derivatives were previously used by Poincar\'{e}, Pompeiu, and Kolossov).
In a more classical notation,
\[
\conju{D}^k f(z) \eqdef
\frac{\dpar^{|k|}}{\dpar^{k_1}\overline{z_1} \cdots
\dpar^{k_n}\overline{z_n}}\,f(z) \qquad (z\in\Omega).
\]
Let $\cA(\Om)$ be the class of all analytic functions on $\Om$.
It is defined by the system of equations
\begin{equation}\label{eq:analytic}
\conju{D}^{(1,0,\ldots,0)}f=0,\qquad
\conju{D}^{(0,1,\ldots,0)}f=0,\qquad
\ldots,\qquad
\conju{D}^{(0,0,\ldots,1)}f=0.
\end{equation}
Given an open subset $\Omega$ of $\bC^n$
and a multi-index $k=(k_1,\ldots,k_n)$ in $\bN^n$,
$k$-analytic functions on $\Om$
are defined~\cite[Section~6.4]{Balk1991} as functions that can be represented in the form
\begin{equation}\label{eq:Balk}
f(z)
=\sum_{j_1,\ldots,j_n=0}^{k_1-1,\ldots,k_n-1}
g_j(z)\,\conju{z}^j,
\end{equation}
where all functions $g_j$ are analytic.
We denote by $\cA_k(\Om)$ the class of all functions of the form~\eqref{eq:Balk}.
For simply connected domains $\Om$,
such functions can also be characterized
as smooth solutions of
the system of differential equations
\begin{equation}\label{eq:Balk1}
\conju{D}^{(k_1,0,\ldots,0)}f=0,\qquad
\conju{D}^{(0,k_2,\ldots,0)}f=0,\qquad
\ldots,\qquad
\conju{D}^{(0,0,\ldots,k_n)}f=0.
\end{equation}
Instead of considering polyanalytic functions of a given multi-order $k$, we prefer to work with the following classes of ``homogeneously polyanalytic'' functions.

\begin{definition}\label{def:Am}
Let $\Om$ be an open set in $\bC^n$ and $m\in\bN$.
We say that $f\colon\Om\to\bC$
is \emph{homogeneously polyanalytic of total order $m$}
or just \emph{$m$-analytic},
if $f$ belongs to the class $C^m(\Om)$
and $\conju{D}^k f=0$ for every $k$ in $\bN_0^n$
with $|k|=m$.
We denote by $\cA_m(\Om)$
the set of all such functions.
\end{definition}

The multi-indices $k$ with $|k|=m$ can be associated with $m$-multisubsets of the set $\{1,\ldots,n\}$, and the number of such multi-indices is $\binom{n+m-1}{m}$.
For example, the class $\cA_1(\Omega)=\cA(\Omega)$
is defined by $n$ differential equations~\eqref{eq:analytic}.

\begin{definition}\label{def:wtAm}
Let $\Om$ be an open set in $\bC^n$ and $m\in\bN$.
We denote by $\wt{\cA}_m(\Om)$ the set of all functions $f\colon\Om\to\bC$ that can be written in the form
\begin{equation}\label{eq:polyanalytic_function_as_polynomial}
f(z)
=\sum_{|j|<m} 
h_j(z)\overline{z}^j,
\end{equation}
where $h_j\in\cA(\Om)$
for all $j\in\bNz^n$ with $|j|<m$.
\end{definition}

In Section~\ref{sec:general_form}
we prove that
$\wt{\cA}_m(\Om)=\cA_m(\Om)$.
Obviously, $\cA_m(\Om)$
is a complex vector space.
In Proposition~\ref{prop:Am_is_invariant_under_linear_changes_of_variables}
we show that the space $\cA_m(\Om)$, with $m$ in $\bN$,
is invariant under linear changes of variables (of course, the domain can change).
In a contrast, the spaces $\cA_k$, with $n\ge 2$ and $k\in\bN^n$, $k\ne(1,1,\ldots,1)$, are not invariant under linear changes of variables; see Proposition~\ref{prop:Ak_is_not_invariant_under_linear_changes_of_variables}.
If $m\in\bN$, then $\cA_m(\Om)\subseteq\cA_{(m,\ldots,m)}(\Om)$, and some results about $k$-analytic functions ($k\in\bN^n$) can be applied to $\cA_m(\Om)$.
On the other hand,
if $k$ in $\bN^n$, then
$\cA_k(\Om)\subseteq\cA_{|k|+1-n}(\Om)$.

From now on, we denote by $\mu$
the Lebesgue measure on $\bC^n$.

\begin{definition}\label{def:Bergman_polyanalytic}
Let $\Om$ be an open set in $\bC^n$, $m\in\bN$,
$W\colon\Om\to(0,+\infty)$ be a continuous function,
and $\dif\nu=W\,\dif\mu$.
We denote by $\cA_m^2(\Om,\nu)$
the set of all functions $f\in\cA_m(\Om)$
that are square-integrable with respect to $\nu$.
We consider this space with the inner product inherited from $L^2(\Om,\nu)$.
Furthermore, we denote by $\cA_{(m)}^2(\Om,\nu)$
the orthogonal complement of $\cA_{m-1}^2(\Om,\nu)$ in $\cA_m(\Om,\nu)$.
Here $\cA_0^2(\Om,\nu)\eqdef\{0\}$.
\end{definition}

Section~\ref{sec:weighted_mean_value_property} contains a weighted mean-value property for integrable functions belonging to $\cA_m(\Om)$.
As a consequence of this property,
$\cA_m^2(\Om,\nu)$ is a RKHS.
In Section~\ref{sec:pushforward_RK}
we show how the RK transforms under a weighted change of variables.
In Section~\ref{sec:RK_unit_ball}
we use the previous tools
to compute the RK of $\cA_m^2(\bB_n,\mu_\al)$,
where $\bB_n$ is the unit ball in $\bC^n$ and $\mu_\al$ is the standard radial measure on $\bB_n$, see~\eqref{eq:mu_alpha}.
Finally, in Section~\ref{sec:RK_Siegel}
we compute the RK of $\cA_m^2(\bH_n,\nu_\al)$,
where $\bH_n$ is the standard Siegel domain in $\bC^n$
and $\nu_\al$ is a weighted Lebesgue measure (see~\eqref{Siegel} and~\eqref{eq:measure_on_Siegel}).

There are many recent investigations on Toeplitz operators, acting in polyanalytic Bergman spaces over one-dimensional domains
\cite{CuckovicLe2012,
HutnikHutnikova2015,
LoaizaRamirez2017,
RamirezSanchez2015,
Vasilevski2008book,
MaximenkoTelleria2020}.
We hope that this paper can serve as a basis for some multidimensional generalizations,
see Remarks~\ref{rem:rotations_in_the_ball},
\ref{rem:biholomorphic_changes_of_variables_in_the_ball},
and
\ref{rem:biholomophic_changes_of_variables_in_Siegel}.

Finalizing this introduction,
we mention several multidimensional results
about polyanalytic spaces and kernels
in other settings.
Askour, Intissar, and Mouayn~\cite{AskourIntissarMouayn1997} computed the RK of the
space of polyanalytic functions on $\bC^n$, square-integrable with respect to the Gaussian weight (i.e.,
the polyanalytic Bargmann--Segal--Fock space).
If $k\in\bN^n$ and $(\Om,\nu)$ is a direct product of one-dimensional domains with some weights
(for example, $\Om=\bC^n$ or $\Om=\bD^n$),
then the RK of $\cA_k(\Om,\nu)$ can be obtained as the tensor product of the corresponding reproducing kernels on one-dimensional domains~\cite{HachadiYoussfi2019}.
Ram\'{i}rez Ortega and S\'{a}nchez Nungaray~\cite{RamirezSanchez2013}
defined some polyanalytic-type spaces
on the Siegel domain $\bH_n$
by other systems of differential equations,
involving non-constant coefficients.

\section{Homogeneously polyanalytic functions}
\label{sec:general_form}

Let $\Om$ be an open set in $\bC^n$ and $m\in\bN$.
In this section we show that 
$\cA_m(\Om) = \wt{\cA}_m(\Om)$
and mention some other properties of $\cA_m(\Om)$.

\begin{lemma}\label{lem:g_is_analytic}
Let $f\in\cA_m(\Omega)$.
Then the following function is analytic:
\[
g(z) \eqdef
\sum_{\substack{k\in\bNz^n\\|k|<m}}
\frac{(-1)^{|k|}}{k!} (\overline{D}^k f)(z)\, \overline{z}^k.
\]
\end{lemma}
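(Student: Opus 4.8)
The plan is to verify directly that $g$ satisfies the Cauchy--Riemann system~\eqref{eq:analytic}, i.e.\ that $\overline{D}^{e_i} g = 0$ for each $i\in\{1,\ldots,n\}$, where $e_i$ denotes the multi-index with a $1$ in position $i$ and zeros elsewhere. Since $f\in C^m(\Omega)$, every summand $\overline{D}^k f$ with $|k|<m$ is of class $C^1$, so $g\in C^1(\Omega)$ and the differentiations carried out below are legitimate.

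First I would apply $\overline{D}^{e_i}$ termwise using the product rule. The operator acts either on the derivative factor, giving $\overline{D}^{e_i}\overline{D}^k f=\overline{D}^{k+e_i}f$, or on the monomial, giving $\overline{D}^{e_i}(\overline{z}^k)=k_i\,\overline{z}^{k-e_i}$ (understood as $0$ when $k_i=0$). This splits $\overline{D}^{e_i}g$ into two families of terms: one in which the new derivative falls on $f$, and one in which it falls on $\overline{z}^k$.

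The key step is to reindex the first family by setting $l=k+e_i$, so that its monomial factor $\overline{z}^{k}=\overline{z}^{l-e_i}$ matches the monomial $\overline{z}^{k-e_i}$ of the second family after the renaming $k\mapsto l$. Using the elementary identity $\tfrac{1}{(l-e_i)!}=\tfrac{l_i}{l!}$, both families take the common shape $\tfrac{l_i}{l!}(\overline{D}^l f)\,\overline{z}^{l-e_i}$, but carry the signs $(-1)^{|l|-1}$ and $(-1)^{|l|}$, respectively. Hence, for every $l$ with $l_i\ge 1$ and $|l|<m$, the two contributions cancel exactly, and a complete telescoping cancellation occurs across the entire range $|l|<m$.

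The only terms surviving the cancellation are those of the first family with $|l|=m$, namely $\sum_{|l|=m,\,l_i\ge 1}\tfrac{(-1)^{m-1}l_i}{l!}(\overline{D}^l f)\,\overline{z}^{l-e_i}$. These vanish identically, because $f\in\cA_m(\Omega)$ means precisely that $\overline{D}^l f=0$ whenever $|l|=m$. Therefore $\overline{D}^{e_i}g=0$ for every $i$, so $g$ is analytic. The only delicate point is the bookkeeping in the reindexing step, where the factorials and alternating signs must be matched carefully so that the telescoping is exact; no genuine analytic obstacle arises, since the whole argument reduces to this combinatorial identity together with the defining equations of $\cA_m(\Omega)$.
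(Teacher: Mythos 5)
Your proof is correct and follows essentially the same route as the paper's: apply $\overline{D}^{e_i}$ with the product rule, reindex the terms where the derivative falls on $f$ (via $l=k+e_i$ and the identity $\tfrac{1}{(l-e_i)!}=\tfrac{l_i}{l!}$) so they cancel pairwise against the terms where the derivative falls on $\overline{z}^k$, and kill the surviving top-order terms $|l|=m$ using $f\in\cA_m(\Omega)$. The paper merely packages the same cancellation as four sums $S_1+S_2+S_3+S_4$ with $S_2=S_3=0$ and $S_1+S_4=0$.
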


\begin{proof}
Let $p\in\{1,\ldots,n\}$ and $e_p$ be the $p$-th canonical vector in $\bNz^n$,
i.e., $e_p\eqdef(\de_{p,s})_{s=1}^n$, where $\de$ is the Kronecker's delta.
We have to show that $\conju{D}^{e_p}g=0$.
By the product rule,
\[
(\overline{D}^{e_p}g)(z) = S_1(z) + S_2(z) + S_3(z) + S_4(z),
\]
where
\begin{alignat*}{2}
S_1(z) &= \sum_{|k|<m-1} \tfrac{(-1)^{|k|}}{k!} (\overline{D}^{k+e_p} f)(z) \overline{z}^k, &\qquad S_2(z) &= \sum_{|k|=m-1} \tfrac{(-1)^{|k|}}{k!} (\overline{D}^{k+e_p} f)(z) \overline{z}^k, \\
S_3(z) &= \sum_{\substack{|k|<m \\ k_p=0}} \tfrac{(-1)^{|k|}}{k!} (\overline{D}^k f)(z) \overline{D}^{e_p}(\overline{z}^k), 
& S_4(z) &= \sum_{\substack{|k|<m\\ k_p>0}} \tfrac{(-1)^{|k|}}{(k-e_p)!} (\overline{D}^k f)(z) \overline{z}^{k-e_p}.
\end{alignat*}
We have that $S_2(z)=0$,
because $f \in \cA_m(\Omega)$
and $|k+e_p|=m$ in the sum defining $S_2$.
Also $S_3(z)=0$, because
$\conju{D}^{e_p}\conju{z}^k=0$ when $k_p=0$.
Finally, 
with the change of variable $j=k-e_p$,
we rewrite $S_4(z)$ as
\[
S_4(z) = - \sum_{|j|<m-1} \frac{(-1)^{|j|}}{j!} (\overline{D}^{r+e_p} f)(z)\,\overline{z}^j.
\]
Therefore, $(\conju{D}^{e_p} g)(z) = S_1(z) + S_4(z) = 0$.
\end{proof}

\begin{lemma}\label{lem:f_in_A_p_g_in_A_q_implies_fg_in_A_p+q-1}
Let $f \in \wt{\cA}_p(\Omega)$ and $g \in \wt{\cA}_q(\Omega)$. Then $fg \in \wt{\cA}_{p+q-1}(\Omega)$.
\end{lemma}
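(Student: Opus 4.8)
The plan is to work directly from the representation in Definition~\ref{def:wtAm}, since membership in $\wt{\cA}_m(\Om)$ is \emph{by definition} the existence of an expansion in powers of $\conju{z}$ with analytic coefficients; no differential-equation characterization is needed here. First I would invoke the hypotheses to write
\[
f(z)=\sum_{|j|<p} a_j(z)\,\conju{z}^j,
\qquad
g(z)=\sum_{|l|<q} b_l(z)\,\conju{z}^l,
\]
with all $a_j,b_l\in\cA(\Om)$. Forming the product and expanding gives the double sum $fg=\sum_{|j|<p}\sum_{|l|<q} a_j b_l\,\conju{z}^{j+l}$.

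Next I would record the two observations that drive the result. Since $\cA(\Om)$ is closed under products and finite sums, every coefficient $a_j b_l$ is analytic, and so is any finite sum of such products. Meanwhile, the exponent multi-index in each term satisfies $|j+l|=|j|+|l|\le(p-1)+(q-1)=p+q-2<p+q-1$, so every monomial $\conju{z}^{j+l}$ appearing in $fg$ has total degree strictly below $p+q-1$.

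Finally, to present $fg$ in the canonical form demanded by Definition~\ref{def:wtAm}, I would collect terms with the same total exponent: for each $\nu\in\bNz^n$ with $|\nu|<p+q-1$ set $c_\nu\eqdef\sum_{j+l=\nu} a_j b_l$, a finite sum of analytic functions, hence analytic. This yields $fg=\sum_{|\nu|<p+q-1} c_\nu\,\conju{z}^\nu$, which is exactly the form witnessing $fg\in\wt{\cA}_{p+q-1}(\Om)$. I do not expect any genuine obstacle: the argument is pure bookkeeping. The one point deserving a moment's care is the degree count $|j+l|\le p+q-2$, which confirms that the shift by $-1$ in the statement is sharp and that the product lands one order below the naive sum $p+q$; the regrouping step merely reorganizes a finite sum and preserves analyticity of the coefficients.
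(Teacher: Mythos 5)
Your proof is correct and follows essentially the same route as the paper's: the paper's one-line argument is precisely your observation that $\conju{z}^j\,\conju{z}^l=\conju{z}^{j+l}$ with $|j+l|=|j|+|l|\le(p-1)+(q-1)<p+q-1$, combined with the closure of $\cA(\Om)$ under products and finite sums. Your version merely spells out the regrouping of coefficients $c_\nu=\sum_{j+l=\nu}a_j b_l$, which the paper leaves implicit.
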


\begin{proof}
This lemma follows from the elementary observation that if $j\in\bNz^n$ and $k\in\bNz^n$,
with $|j|<p$ and $|k|<q$,
then $\conju{z}^j\,\conju{z}^k=\conju{z}^{j+k}$
and $|j+k|=|j|+|k|<p+q-1$.
\end{proof}

\begin{theorem}\label{thm:Am_characterization}
Let $\Om$ be an open set in $\bC^n$ and $m\in\bN$.
Then $\cA_m(\Omega)=\wt{\cA}_m(\Omega)$.
\end{theorem}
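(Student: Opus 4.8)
The plan is to establish the two inclusions separately. The inclusion $\wt{\cA}_m(\Om)\subseteq\cA_m(\Om)$ is a one-line computation, while the reverse inclusion $\cA_m(\Om)\subseteq\wt{\cA}_m(\Om)$ I would obtain by induction on $m$, feeding Lemmas~\ref{lem:g_is_analytic} and~\ref{lem:f_in_A_p_g_in_A_q_implies_fg_in_A_p+q-1} into the inductive step.

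For $\wt{\cA}_m(\Om)\subseteq\cA_m(\Om)$ I would take $f=\sum_{|j|<m}h_j\conju{z}^j$ with each $h_j$ analytic. Such an $f$ is smooth, hence lies in $C^m(\Om)$, and for every multi-index $k$ with $|k|=m$ the analyticity of $h_j$ gives $\conju{D}^k(h_j\conju{z}^j)=h_j\,\conju{D}^k\conju{z}^j$. Since $\conju{D}^k\conju{z}^j$ vanishes unless $k\le j$ componentwise, and this is impossible when $|k|=m>|j|$, every term dies and $\conju{D}^k f=0$; thus $f\in\cA_m(\Om)$.

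For the reverse inclusion I would induct on $m$, the case $m=1$ being the definitional identity $\cA_1(\Om)=\cA(\Om)=\wt{\cA}_1(\Om)$. Assuming the statement for all orders below $m$, I would take $f\in\cA_m(\Om)$ and split off the $k=0$ term of the analytic function $g$ from Lemma~\ref{lem:g_is_analytic}, which yields
\[
f = g + \sum_{1\le|k|<m}\frac{(-1)^{|k|+1}}{k!}\,(\conju{D}^k f)\,\conju{z}^k .
\]
Here $g\in\cA(\Om)=\wt{\cA}_1(\Om)\subseteq\wt{\cA}_m(\Om)$. For each $k$ with $1\le|k|<m$, the hypothesis $f\in C^m(\Om)$ gives $\conju{D}^k f\in C^{m-|k|}(\Om)$, and $\conju{D}^l(\conju{D}^k f)=\conju{D}^{k+l}f=0$ for all $|l|=m-|k|$, so $\conju{D}^k f\in\cA_{m-|k|}(\Om)$; by the induction hypothesis $\conju{D}^k f\in\wt{\cA}_{m-|k|}(\Om)$. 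Since clearly $\conju{z}^k\in\wt{\cA}_{|k|+1}(\Om)$, Lemma~\ref{lem:f_in_A_p_g_in_A_q_implies_fg_in_A_p+q-1} places the product $(\conju{D}^k f)\conju{z}^k$ in $\wt{\cA}_{(m-|k|)+(|k|+1)-1}(\Om)=\wt{\cA}_m(\Om)$. As $\wt{\cA}_m(\Om)$ is a vector space, $f$ is a finite sum of its members and hence belongs to it.

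The main obstacle is exactly the reverse inclusion: one has to produce, out of $f$ alone, an analytic function whose subtraction kills the top antiholomorphic order without creating monomials of degree $\ge m$ in $\conju{z}$. This is precisely what the explicit combination defining $g$ in Lemma~\ref{lem:g_is_analytic} accomplishes, and the degree bookkeeping that guarantees no overflow is encapsulated in Lemma~\ref{lem:f_in_A_p_g_in_A_q_implies_fg_in_A_p+q-1}. Everything else is routine; the only regularity input is the standing assumption $f\in C^m(\Om)$, which is what lets the derivatives $\conju{D}^k f$ descend to lower orders and drive the induction.
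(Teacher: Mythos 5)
Your proposal is correct and follows essentially the same route as the paper's proof: the identical decomposition $f = g - \sum_{0<|k|<m}\tfrac{(-1)^{|k|}}{k!}(\conju{D}^k f)\,\conju{z}^k$ built from Lemma~\ref{lem:g_is_analytic}, the same induction on $m$, and the same invocation of Lemma~\ref{lem:f_in_A_p_g_in_A_q_implies_fg_in_A_p+q-1} to control the antiholomorphic degree. The only difference is that you spell out details the paper leaves implicit, namely the verification of the easy inclusion $\wt{\cA}_m(\Om)\subseteq\cA_m(\Om)$ and the check that $\conju{D}^k f\in\cA_{m-|k|}(\Om)$.
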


\begin{proof}
It is well known that $\cA(\Omega) = \wt{\cA}(\Omega)$. Let $m>1$.
It is obvious that $\wt{\cA}_m(\Omega) \subseteq \cA_m(\Omega)$.
We show, by induction on $m$, that 
$\cA_m(\Omega) \subseteq \wt{\cA}_m(\Omega)$. 
Suppose $\cA_p(\Omega) \subseteq \wt{\cA}_p(\Omega)$ for every $p<m$ and
let $f\in \cA_m(\Omega)$. Define $g$ as in 
Lemma~\ref{lem:g_is_analytic}, then observe that
\[
f(z) = - \sum_{0<|k|<m} \frac{(-1)^{|k|}}{k!} (\overline{D}^k f)(z) \, \overline{z}^k + g(z).
\]
For every $k$ with $0<|k|<m$,
we have
$\conju{z}^k \in \wt{\cA}_{|k|+1}(\Om)$
and
$\conju{D}^k f \in \cA_{m-|k|}(\Om) \subseteq \wt{\cA}_{m-|k|}(\Om)$;
the last inclusion holds by the induction hypothesis.
Finally, apply Lemma~\ref{lem:f_in_A_p_g_in_A_q_implies_fg_in_A_p+q-1}.
\end{proof}

\begin{corollary}\label{cor:Am_around_point_a}
Let $f\in\cA_m(\Om)$ and $a\in\Om$.
Then there exists a family of functions $(h_k)_{|k|<m}$
in $\cA(\Om)$,
such that for every $z$ in $\Om$,
\begin{equation}\label{eq:Am_around_point_a}
f(z)=\sum_{\substack{k\in\bNz^n\\|k|<m}}
h_k(z)\,(\conju{z}-\conju{a})^k.
\end{equation}
\end{corollary}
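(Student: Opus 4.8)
The plan is to reduce the statement to Theorem~\ref{thm:Am_characterization} by a multi-index binomial expansion, exploiting that $\conju{a}$ is a constant. Since $f\in\cA_m(\Om)$, that theorem supplies analytic functions $(g_j)_{|j|<m}$ with $f(z)=\sum_{|j|<m}g_j(z)\,\conju{z}^j$ for all $z\in\Om$. The whole task is then to re-express each monomial $\conju{z}^j$ in powers of the shifted conjugate variable $\conju{z}-\conju{a}$ and to recollect the result.

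First I would write $\conju{z}=(\conju{z}-\conju{a})+\conju{a}$ componentwise and apply the multi-index binomial theorem,
\[
\conju{z}^j=\sum_{0\le k\le j}\binom{j}{k}\,\conju{a}^{\,j-k}\,(\conju{z}-\conju{a})^k,
\]
where $k\le j$ abbreviates $k_p\le j_p$ for each $p$ and $\binom{j}{k}=\prod_{p=1}^n\binom{j_p}{k_p}$. Substituting this into the expansion of $f$ and interchanging the two finite sums, I would gather the terms carrying a fixed power $(\conju{z}-\conju{a})^k$ and set
\[
h_k(z)\eqdef\sum_{\substack{|j|<m\\ j\ge k}}\binom{j}{k}\,\conju{a}^{\,j-k}\,g_j(z),
\]
so that $f(z)=\sum_{|k|<m}h_k(z)\,(\conju{z}-\conju{a})^k$, which is exactly~\eqref{eq:Am_around_point_a}. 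Each $h_k$ is analytic, being a finite linear combination of the analytic functions $g_j$ with the constant coefficients $\binom{j}{k}\conju{a}^{\,j-k}$; the crucial point is that $\conju{a}$ is merely a constant and contributes no dependence on $\conju{z}$. The degree bound is automatic as well, since $k\le j$ forces $|k|\le|j|<m$, so only multi-indices $k$ with $|k|<m$ occur.

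There is no genuine obstacle here: once Theorem~\ref{thm:Am_characterization} is in hand, the argument is pure bookkeeping with multi-index binomial coefficients. The only points deserving a line of justification are the preservation of analyticity under multiplication by the constants $\conju{a}^{\,j-k}$ and the legitimacy of reindexing the finite double sum, together with the range $|k|<m$. An alternative and equally short route would invoke translation invariance directly: the operators $\conju{D}^{e_p}$ commute with the shift $z\mapsto z-a$, so the function $w\mapsto f(w+a)$ belongs to $\cA_m(\Om-a)$; applying Theorem~\ref{thm:Am_characterization} to it and substituting $w=z-a$ yields the claim at once.
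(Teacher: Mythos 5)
Your proposal is correct and takes essentially the same route as the paper's own proof: both write $f$ in the form~\eqref{eq:polyanalytic_function_as_polynomial} via Theorem~\ref{thm:Am_characterization}, expand $\conju{z}^j=(\conju{z}-\conju{a}+\conju{a})^j$ into multi-powers of $\conju{z}-\conju{a}$, and regroup the summands. You simply make explicit the bookkeeping (the multi-index binomial coefficients, the formula for $h_k$, and the bound $|k|\le|j|<m$) that the paper leaves implicit.
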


\begin{proof}
First, we write $f$ as~\eqref{eq:polyanalytic_function_as_polynomial}.
Then, expanding $\conju{z}^j=(\conju{z}-\conju{a}+\conju{a})^j$ into multi-powers of $\conju{z}-\conju{a}$ and regrouping the summands, we obtain a sum of the form~\eqref{eq:Am_around_point_a}.
\end{proof}

\begin{corollary}
\label{cor:Am_into_power_series}
Let $f\in\cA_m(\Om)$, $a\in\Om$, and $r>0$ such that $a+r\bB_n\subseteq\Om$.
Then there exists a family $(\be)_{j,k\in\bNz^n,|k|<m}$ of complex numbers such that for every $z$ in $a+r\bB_n$,
\begin{equation}\label{eq:Am_into_power_series}
f(z) =
\sum_{j\in\bNz^n}
\sum_{\substack{k\in\bNz^n \\ |k|<m}}
\be_{j,k} (z-a)^j (\conju{z}-\conju{a})^k.
\end{equation}
Moreover, this series converges uniformly on every compact subset of $\bB_n$.
\end{corollary}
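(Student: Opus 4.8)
The plan is to combine Corollary~\ref{cor:Am_around_point_a} with the convergent Taylor expansion of analytic functions on a ball. First I would apply Corollary~\ref{cor:Am_around_point_a} at the point $a$, obtaining analytic functions $(h_k)_{|k|<m}$ on $\Om$ with
\[
f(z)=\sum_{\substack{k\in\bNz^n\\|k|<m}} h_k(z)\,(\conju{z}-\conju{a})^k
\qquad(z\in\Om).
\]
This already isolates the antiholomorphic dependence into the finitely many monomials $(\conju{z}-\conju{a})^k$ with $|k|<m$; it remains only to expand the analytic coefficients into powers of $z-a$.

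Next, since each $h_k$ is analytic on $\Om$ and $a+r\bB_n\subseteq\Om$, I would expand $h_k$ in its monomial (Taylor) series centred at $a$,
\[
h_k(z)=\sum_{j\in\bNz^n}\be_{j,k}\,(z-a)^j ,
\]
where $\be_{j,k}$ are the (uniquely determined) Taylor coefficients of $h_k$ at $a$. The set $a+r\bB_n$ is, after translation, a balanced domain, so this monomial series represents $h_k$ and converges uniformly on every compact subset of $a+r\bB_n$; see \cite{Rudin1980}. Substituting these expansions into the displayed formula for $f$, and using that the outer sum over $k$ is finite, yields exactly the double series~\eqref{eq:Am_into_power_series} with the stated coefficients.

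For the convergence claim I would fix a compact set $E\subseteq a+r\bB_n$ and argue factor by factor. Each monomial $(\conju{z}-\conju{a})^k$ is bounded on $E$, and multiplying a uniformly convergent series by a bounded function preserves uniform convergence; hence each of the finitely many series $\sum_j \be_{j,k}\,(z-a)^j(\conju{z}-\conju{a})^k$ converges uniformly on $E$. A finite sum of uniformly convergent series is again uniformly convergent, which gives the uniform convergence of~\eqref{eq:Am_into_power_series} on $E$.

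The only point requiring genuine care—rather than a real obstacle—is the convergence of the monomial Taylor series on the ball: in several variables the natural domain of convergence of a power series is a Reinhardt domain rather than an arbitrary ball, so one cannot simply quote the one-variable statement. The remedy is to invoke the fact that a function analytic on the ball is represented by its monomial series uniformly on compact subsets (equivalently, expand first into homogeneous polynomials on the balanced domain $r\bB_n$ and then into monomials). Once this is in hand, the rest is a routine regrouping of a finite sum.
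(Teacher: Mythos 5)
Your proposal is correct and follows essentially the same route as the paper's proof: apply Corollary~\ref{cor:Am_around_point_a} to isolate the antiholomorphic monomials $(\conju{z}-\conju{a})^k$, then expand each analytic coefficient $h_k$ into its monomial series on the ball, converging uniformly on compact subsets, and regroup the finite outer sum. Your extra care about the multivariable subtlety (the ball being a complete logarithmically convex Reinhardt domain after translation, so the monomial series does represent $h_k$ there) is a worthwhile elaboration of what the paper dismisses as ``well known,'' but it is not a different argument.
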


\begin{proof}
It is well known that every
holomorphic function on $\bB_n$, decomposes on $\bB_n$ into a power series,
converging on $\bB_n$
and uniformly converging on compact subsets of $\bB_n$.
Applying this fact to each $h_j$ from Corollary~\ref{cor:Am_around_point_a},
we obtain~\eqref{eq:Am_into_power_series}.
\end{proof}

Let us mention a version
of the uniqueness property
for $m$-analytic functions.

\begin{proposition}
\label{prop:polyanalytic_uniqueness}
Let $\Om$ be a connected open set in $\bC^n$,
$\Om_1$ be an open subset of $\Om$,
and $f\in\cA_m(\Om)$
such that $f(z)=0$ for every $z$ in $\Om_1$.
Then $f(z)=0$ for every $z$ in $\Om$.
\end{proposition}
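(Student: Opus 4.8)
The plan is to reduce the statement to the classical identity theorem for holomorphic functions of several variables by stripping off the antiholomorphic coefficients one total degree at a time. By Theorem~\ref{thm:Am_characterization} I may write $f$ globally on $\Om$ in the form
\[
f=\sum_{\substack{k\in\bNz^n\\ |k|<m}} h_k\,\overline{z}^k,
\qquad h_k\in\cA(\Om).
\]
Since $f\in C^m(\Om)$ and $f$ vanishes on the open set $\Om_1$, every Wirtinger derivative $\overline{D}^\ell f$ with $|\ell|\le m-1$ also vanishes on $\Om_1$. First I would compute $\overline{D}^\ell f$ using the Leibniz rule together with the two facts that $\overline{D}^\beta h_k=0$ for $\beta\neq 0$ (holomorphy of $h_k$) and $\overline{D}^\ell\overline{z}^k=\frac{k!}{(k-\ell)!}\overline{z}^{k-\ell}$ when $\ell_i\le k_i$ for all $i$ (and $0$ otherwise). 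This produces the triangular expression
\[
\overline{D}^\ell f=\sum_{\substack{|k|<m\\ k\ge\ell}} \frac{k!}{(k-\ell)!}\,h_k\,\overline{z}^{k-\ell}.
\]

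Next I would run a downward induction on $|\ell|$ from $m-1$ down to $0$. When $|\ell|=m-1$ the only admissible index is $k=\ell$, so the identity collapses to $\overline{D}^\ell f=\ell!\,h_\ell$; as the left-hand side vanishes on $\Om_1$, we obtain $h_\ell=0$ on $\Om_1$. For a general $\ell$, all surviving terms with $k\ge\ell$ and $k\neq\ell$ satisfy $|k|>|\ell|$, so their coefficients $h_k$ are already known to vanish identically by the inductive hypothesis; hence again $\ell!\,h_\ell=0$ on $\Om_1$, i.e. $h_\ell=0$ on $\Om_1$. At each step I then invoke the several-variables identity theorem on the \emph{connected} set $\Om$: a holomorphic function that vanishes on the nonempty open subset $\Om_1$ vanishes on all of $\Om$. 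This upgrades $h_\ell=0$ on $\Om_1$ to $h_\ell\equiv 0$ on $\Om$, feeding the induction. Once every $h_k\equiv 0$, the representation gives $f\equiv 0$ on $\Om$.

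The only point requiring care is the bookkeeping in the triangular system: one must check that every index $k$ with $k\ge\ell$ componentwise and $k\neq\ell$ indeed has strictly larger total degree $|k|>|\ell|$, which is exactly what makes the downward induction well-founded; this is elementary. The genuine analytic content is supplied entirely by the holomorphic identity theorem, and the reduction to it through Theorem~\ref{thm:Am_characterization} is the crux of the argument, so I expect no serious obstacle. (Alternatively, one could avoid the global representation and argue locally on a ball $a+r\bB_n$ via the power-series expansion of Corollary~\ref{cor:Am_into_power_series}, combined with an open--closed connectedness argument to propagate the vanishing across $\Om$; but the global route above is shorter.)
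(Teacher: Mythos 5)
Your proof is correct, but it takes a genuinely different route from the paper's. The paper disposes of the proposition in two lines: since $\cA_m(\Om)\subseteq\cA_{(m,\ldots,m)}(\Om)$, the claim is reduced to the uniqueness property of $k$-analytic functions ($k\in\bN^n$), which is cited from Balk's book (Section 6.4) rather than reproved. You instead give a self-contained argument: the global representation $f=\sum_{|k|<m}h_k\conju{z}^k$ from Theorem~\ref{thm:Am_characterization}, the triangular identities $\conju{D}^\ell f=\sum_{k\ge\ell,\,|k|<m}\frac{k!}{(k-\ell)!}\,h_k\,\conju{z}^{k-\ell}$ (valid on all of $\Om$ by holomorphy of the $h_k$ and the Leibniz rule), and a downward induction on $|\ell|$ which at each stage yields $\conju{D}^\ell f=\ell!\,h_\ell$ on $\Om$, hence $h_\ell=0$ on $\Om_1$, hence $h_\ell\equiv0$ on $\Om$ by the several-variables identity theorem on the connected set $\Om$. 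The induction is well-founded precisely because $k\ge\ell$ componentwise with $k\ne\ell$ forces $|k|>|\ell|$, as you check. What each approach buys: the paper's proof is shorter and delegates the analytic content to the literature; yours exposes the mechanism, relies only on the paper's own Theorem~\ref{thm:Am_characterization} plus the standard identity theorem for holomorphic functions, and shows in particular that no input about multi-order ($k\in\bN^n$) polyanalytic functions is needed. One shared caveat: the proposition as literally stated is false for $\Om_1=\emptyset$, so nonemptiness of $\Om_1$ must be assumed; your write-up says ``nonempty'' explicitly, while the paper's statement (and Balk-based proof) carries the same implicit assumption.
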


\begin{proof}
For $k$ in $\bN^n$,
the uniqueness property of $k$-analytic functions is proven in~\cite[Section~6.4]{Balk1991}.
The uniqueness property for $m$-analytic functions is a corollary of this fact, since $\cA_m(\Om)\subseteq\cA_{(m,\ldots,m)}(\Om)$.
\end{proof}

To finish this section,
we will show that the class $\cA_m$ with $m$ in $\bN$ is closed under linear changes of variables, while the classes $\cA_k$ with $k\in\bN^n$ are generally not.

\begin{proposition}\label{prop:Am_is_invariant_under_linear_changes_of_variables}
Let $M$ be an invertible $n\times n$ complex matrix
and $f$ in $\cA_m(\Om)$.
Define $g\colon M\Om\to\bC$ by
$g(z)\eqdef f(M^{-1}z)$.
Then $g\in\cA_m(M\Om)$.
\end{proposition}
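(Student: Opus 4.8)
The plan is to reduce everything to the algebraic characterization $\cA_m(\Om)=\wt{\cA}_m(\Om)$ from Theorem~\ref{thm:Am_characterization}, and then to check that the class $\wt{\cA}_m$ of Definition~\ref{def:wtAm} is manifestly stable under the substitution $z\mapsto M^{-1}z$. First I would write $f$ in the form guaranteed by Theorem~\ref{thm:Am_characterization}, namely $f(w)=\sum_{|j|<m}h_j(w)\,\conju{w}^{\,j}$ with each $h_j\in\cA(\Om)$, and substitute $w=M^{-1}z$, so that
\[
g(z)=\sum_{\substack{j\in\bNz^n\\|j|<m}} h_j(M^{-1}z)\,\overline{(M^{-1}z)}^{\,j}.
\]
Each coefficient $h_j\circ M^{-1}$ is analytic on $M\Om$, being the composition of an analytic function with a (holomorphic) invertible linear map, so the only point requiring attention is the conjugate factor $\overline{(M^{-1}z)}^{\,j}$.

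The heart of the argument is the observation that conjugation turns the holomorphic-linear map into a map that mixes \emph{only} the conjugate variables, with constant coefficients. Writing $a_{s,t}$ for the entries of $M^{-1}$, the $s$-th component of $w=M^{-1}z$ is $w_s=\sum_{t} a_{s,t}z_t$, hence $\conju{w_s}=\sum_{t}\conju{a_{s,t}}\,\conju{z_t}$ is a homogeneous linear form in $\conju{z_1},\ldots,\conju{z_n}$ with constant coefficients. Consequently $\overline{(M^{-1}z)}^{\,j}=\prod_{s}\conju{w_s}^{\,j_s}$ is a polynomial in $\conju{z_1},\ldots,\conju{z_n}$ of total degree exactly $|j|$, which is strictly less than $m$.

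It then remains to expand each such polynomial and regroup: collecting the monomials $\conju{z}^{\,k}$ produced across all $j$ with $|j|<m$, and absorbing the resulting constant factors into the analytic coefficients $h_j\circ M^{-1}$, rewrites $g$ as $\sum_{|k|<m}\wt{h}_k(z)\,\conju{z}^{\,k}$ with $\wt{h}_k\in\cA(M\Om)$ and $|k|<m$. This exhibits $g\in\wt{\cA}_m(M\Om)$, and Theorem~\ref{thm:Am_characterization} gives $g\in\cA_m(M\Om)$, as required. I do not expect a genuine obstacle here; the one conceptual point to state clearly is precisely why the \emph{total}-order condition survives while a fixed multi-order would not. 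Linear mixing of the conjugate coordinates preserves the total degree $|j|$ in $\conju{z}$ but redistributes it among the individual exponents, so the bound $|k|<m$ is kept whereas componentwise bounds such as those defining $\cA_k$ are destroyed; this is exactly the asymmetry later recorded in Proposition~\ref{prop:Ak_is_not_invariant_under_linear_changes_of_variables}.
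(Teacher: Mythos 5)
Your proof is correct and follows essentially the same route as the paper: both reduce to the characterization $\cA_m=\wt{\cA}_m$ of Theorem~\ref{thm:Am_characterization}, substitute $w=M^{-1}z$ into the decomposition $f(w)=\sum_{|j|<m}h_j(w)\,\conju{w}^{\,j}$, and observe that $\conju{(M^{-1}z)}^{\,j}$ is a homogeneous polynomial of total degree $|j|<m$ in $\conju{z_1},\ldots,\conju{z_n}$. Your explicit regrouping step and the closing remark contrasting total order with multi-order are nice clarifications, but the argument is the same as the paper's.
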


\begin{proof}
Theorem~\ref{thm:Am_characterization} allows us to work with $\wt{\cA}_m$ instead of $\cA_m$.
Let $f$ be like in Definition~\ref{def:wtAm}.
Then
\[
g(z)=\sum_{|j|<m}h_j(M^{-1} z)\,\conju{(M^{-1} z)}^{\,j}.
\]
The functions $z\mapsto h_j(M^{-1}z)$ are analytic.
Let $M^{-1}=[c_{r,s}]_{r,s=1}^n$.
Then
\[
\conju{(M^{-1}z)}^{\,j}
=\prod_{r=1}^n
\left(\sum_{s=1}^n \conju{c_{r,s}}\,\conju{z_s}\right)^{j_r}.
\]
The last expression is a homogeneous polynomial in $\conju{z_1},\ldots,\conju{z_n}$ of total degree $|j|$,
which is strictly less than $m$
(the same conclusion can also be obtained by Lemma~\ref{lem:f_in_A_p_g_in_A_q_implies_fg_in_A_p+q-1}).
Therefore $g\in\wt{\cA}_m(M\Om)$.
\end{proof}

\begin{proposition}
\label{prop:Ak_is_not_invariant_under_linear_changes_of_variables}
Let $n\ge 2$, $\Om$ be an open subset of $\bC^n$,
$k\in\bN^n$, $k\ne(1,1,\ldots,1)$.
Then there exists a function $f$ in $\cA_k(\Om)$
and an invertible matrix $M$ in $\bC^{n\times n}$
such that the function $g\colon M\Om\to\bC$,
defined by $g(z)\eqdef f(M^{-1}z)$, does not belong to $\cA_k(M\Om)$.
\end{proposition}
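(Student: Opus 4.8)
The plan is to exhibit an explicit pair $(f,M)$ for which the change of variables inflates the antiholomorphic degree in one coordinate past its admissible bound. Since $k\ne(1,\ldots,1)$, fix an index $p$ with $k_p\ge 2$, and, using $n\ge 2$, fix any index $q\ne p$. As the test function I would take the ``top'' monomial
\[
f(z)\eqdef\prod_{s=1}^n\conju{z_s}^{\,k_s-1},
\]
which lies in $\cA_k(\Om)$ because it is of the form \eqref{eq:Balk} with the single multi-index $(k_1-1,\ldots,k_n-1)$. For the matrix I would take the transvection $M$ whose inverse acts by $(M^{-1}z)_p=z_p+z_q$ and $(M^{-1}z)_s=z_s$ for $s\ne p$; this $M$ is invertible, since $M^{-1}$ is the identity plus one off-diagonal entry and has determinant $1$. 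Substituting, one obtains the explicit polynomial
\[
g(z)=f(M^{-1}z)=(\conju{z_p}+\conju{z_q})^{k_p-1}\prod_{s\ne p}\conju{z_s}^{\,k_s-1},
\]
again a polynomial in $\conju{z_1},\ldots,\conju{z_n}$ with constant coefficients.

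The key observation is that, viewed as a polynomial in the single variable $\conju{z_q}$, the function $g$ has degree $(k_p-1)+(k_q-1)=k_p+k_q-2$: the factor $\conju{z_q}^{k_q-1}$ combines with the leading term $\conju{z_q}^{k_p-1}$ of the expanded binomial. Because $k_p\ge 2$, this degree is at least $k_q$, which is one unit too large for membership in $\cA_k$. To certify $g\notin\cA_k(M\Om)$ rigorously I would use only the easy direction of the differential characterization, which is valid on an arbitrary open set: if $h=\sum_{j_s\le k_s-1}h_j\,\conju{z}^j$ with all $h_j$ analytic, then applying $\conju{D}^{k_q e_q}$ (with $e_q$ the $q$-th canonical multi-index) annihilates every analytic coefficient by the Leibniz rule and every monomial $\conju{z}^j$ because $j_q\le k_q-1<k_q$, whence $\conju{D}^{k_q e_q}h=0$ for all $h\in\cA_k(M\Om)$. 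In contrast, treating $\conju{z_p}$ and the remaining $\conju{z_s}$ as constants under $\conju{D}^{k_q e_q}$, a direct computation gives
\[
\conju{D}^{k_q e_q}g
=\Bigl(\prod_{s\ne p,q}\conju{z_s}^{\,k_s-1}\Bigr)\,
\conju{D}^{k_q e_q}\!\left[(\conju{z_p}+\conju{z_q})^{k_p-1}\conju{z_q}^{\,k_q-1}\right],
\]
whose bracketed factor is a polynomial in $\conju{z_q}$ of degree $k_p+k_q-2\ge k_q$ with leading coefficient $1$. Hence $\conju{D}^{k_q e_q}g$ is a nonzero polynomial, and a nonzero polynomial cannot vanish identically on the nonempty open set $M\Om$. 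Therefore $g\notin\cA_k(M\Om)$, which proves the proposition.

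The main obstacle is not the construction but the certification of non-membership. One must resist invoking the differential equations \eqref{eq:Balk1} in the converse direction, since that equivalence is only asserted for simply connected domains; instead the argument should check directly that the explicit $g$ violates the necessary condition $\conju{D}^{k_q e_q}g=0$, a condition that follows from the representation \eqref{eq:Balk} on any open set. A secondary point worth flagging is the choice of $f$: taking the full top monomial (rather than $\conju{z_p}^{k_p-1}$ alone) is what makes the argument uniform in $k$, because the extra factor $\conju{z_q}^{k_q-1}$ is precisely what pushes the $\conju{z_q}$-degree of $g$ up to $k_p+k_q-2\ge k_q$ irrespective of whether $k_p$ exceeds $k_q$.
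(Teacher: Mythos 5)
Your proof is correct and takes essentially the same route as the paper's: an explicit $\conju{z}$-monomial $f$ together with an invertible linear map mixing two coordinates, chosen so that the $\conju{z_q}$-degree of $g$ becomes $k_p+k_q-2\ge k_q$, exceeding the bound $k_q-1$. The differences are cosmetic — the paper takes $M^{-1}z=(z_1+z_2,\,z_2-z_1,\,z_3,\ldots,z_n)$ and $f(z)=\conju{z_1}^{\,k_1-1}\conju{z_2}^{\,k_2-1}$, and it leaves implicit the final certification (that an excessive $\conju{z_q}$-degree excludes membership in $\cA_k(M\Om)$), which you rightly make explicit by applying $\conju{D}^{k_q e_q}$ to the representation~\eqref{eq:Balk} rather than invoking~\eqref{eq:Balk1}.
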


\begin{proof}
To simplify the notation, we suppose that $k_1>1$.
The general case is analogous.
Define $M$ in such a manner that
\[
M^{-1}z=(z_1+z_2,z_2-z_1,z_3,\ldots,z_n).
\]
Consider $f\colon\Om\to\bC$,
$f(z)\eqdef \conju{z_1}^{k_1-1}\,\conju{z_2}^{k_2-1}$.
Then
\[
g(z)=(\conju{z_1}+\conju{z_2})^{k_1-1}\,
(\conju{z_2}-\conju{z_1})^{k_2-1}.
\]
In the expansion of the last polynomial, one of the terms is $\conju{z_2}^{k_1+k_2-2}$.
Since $k_1+k_2-2>k_2-1$, we obtain $g\notin\cA_k(M\Om)$, though $f\in\cA_k(\Om)$.
\end{proof}

\section{Weighted mean value property}
\label{sec:weighted_mean_value_property}

In this section we prove that
the value of a $m$-analytic function
at the center of the unit ball $\bB_n$
can be expressed as the integral of this function over the ball, with a certain real radial weight
(Theorem~\ref{thm:weighted_mean_value_property}).
Similar results in the one-dimensional case were proved in~\cite{Koshelev1977,Pessoa2014,HachadiYoussfi2019}.

\subsection*{Jacobi polynomials and their reproducing property}

Some integrals over the unit ball,
written in the spherical coordinates,
reduce to integrals over the unit interval $(0,1)$
with weights of power type at the boundary points $0$ and $1$.
Thereby Jacobi polynomials naturally appear.
They can be defined by Rodrigues formula:
\begin{equation}\label{eq:Jacobi_Rodrigues}
P_m^{(\xi,\eta)}(x)
\eqdef\frac{(-1)^m}{2^m\,m!}\,(1-x)^{-\xi}(1+x)^{-\eta}\;
\frac{\dif^m}{\dif{}x^m}
\Bigl((1-x)^{m+\xi} (1+x)^{m+\eta} \Bigr).
\end{equation}
Here are well-known explicit formulas for $P_m$:
\begin{align}
\label{eq:Jacobi_explicit}
P_m^{(\xi,\eta)}(x)
&=\sum_{s=0}^m \binom{\xi+\eta+m+s}{s}
\binom{\xi+m}{m-s}
\left(\frac{x-1}{2}\right)^s
\\[0.5ex]
\label{eq:Jacobi_explicit2}
&=\sum_{s=0}^m (-1)^s\binom{\xi+\eta+m+s}{s}
\binom{\eta+m}{m-s}
\left(\frac{x+1}{2}\right)^s.
\end{align}
If $\xi,\eta>-1$,
then $(P^{(\xi,\eta)}_m)_{m=0}^\infty$
is an orthogonal family on the interval $(-1,1)$
with respect to the weight $(1-x)^\xi (1+x)^\eta$.
Using~\eqref{eq:Jacobi_Rodrigues}
and integrating by parts yields
the following integral formula:
\begin{align}
\label{eq:Jacobi_with_incomplete_weight}
\int_{-1}^1 P_m^{(\al,\be+1)}(x)\,(1-x)^\al(1+x)^\be\,\dif{}x
=2^{\al+\be+1}(-1)^m\Be(\al+m+1,\be+1). \end{align}

\begin{definition}\label{def:R_polynomial}
Let $m\in\bN$ and $\al,\be>-1$.
We denote by $R_m^{(\al,\be)}$
the following polynomial:
\begin{equation}\label{eq:Rpol_def}
R_m^{(\al,\be)}(t)\eqdef 
\frac{(-1)^m\Be(\al+1,\be+1)}{\Be(\al+m+1,\be+1)}
\,P_m^{(\al,\be+1)}(2t-1).
\end{equation}
\end{definition}

Equivalently, by the symmetry relation for Jacobi polynomials,
\begin{equation}\label{eq:Rpol_def2}
R_m^{(\al,\be)}(t)
=\frac{\Be(\al+1,\be+1)}{\Be(\al+m+1,\be+1)}
\,P_m^{(\be+1,\al)}(1-2t).
\end{equation}
Combining~\eqref{eq:Rpol_def} with~\eqref{eq:Jacobi_explicit2} or~\eqref{eq:Rpol_def2} with~\eqref{eq:Jacobi_explicit},
we get more explicit formulas for $R_m^{(\al,\be)}$:
\begin{align}
\label{eq:R_explicit}
R_m^{(\al,\be)}(t)
&=\frac{\Ga(\al+1)\,\Ga(\be+m+2)}%
{\Ga(\al+\be+2)\,\Ga(\al+m+1)}
\sum_{s=0}^m
\frac{(-1)^s\,\Ga(\al+\be+m+s+2)}{s!\,(m-s)!\,\Ga(\be+s+2)}\,t^s
\\
\label{eq:R_explicit2}
&=\frac{\Ga(\al+1)\,\Ga(\be+m+2)}%
{\Ga(\al+\be+2)\,(\al+m)\,m!}
\sum_{s=0}^m
\frac{(-1)^s\,\binom{m}{s}}{\Be(\al+m,\be+s+2)}\,t^s.
\end{align}
The next simple result was proven in~\cite{BarreraMaximenkoRamos2021}
using the orthogonality of the Jacobi polynomials
and formula~\eqref{eq:Jacobi_with_incomplete_weight}.
Previously,
Hachadi and Youssfi~\cite[formula (5.7)]{HachadiYoussfi2019}
gave another proof for the case $\be=0$.

\begin{proposition}\label{prop:R_repr_on_interval}
Let $m\in\bN$ and $\al,\be>-1$.
Then for every univariate polynomial $h$
with complex coefficients
and $\deg(h)\le m$,
\begin{equation}\label{eq:R_repr_on_interval}
\frac{1}{\Be(\al+1,\be+1)}\int_0^1 h(t) R_m^{(\al,\be)}(t)\,(1-t)^\al t^\be\,\dif{}t=h(0).
\end{equation}
\end{proposition}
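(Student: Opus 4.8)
The plan is to reduce the identity to the standard orthogonality of the Jacobi polynomials together with the single integral formula~\eqref{eq:Jacobi_with_incomplete_weight}. By linearity in $h$ it would suffice to treat monomials, but a cleaner route is to keep $h$ intact and pass to the interval $(-1,1)$ by an affine substitution. First I would put $x=2t-1$, so that $t=(x+1)/2$ and $1-t=(1-x)/2$, whence the weight transforms as $(1-t)^\al t^\be\,\dif{}t = 2^{-\al-\be-1}(1-x)^\al(1+x)^\be\,\dif{}x$. Under this substitution $R_m^{(\al,\be)}(t)$ becomes, by its definition~\eqref{eq:Rpol_def}, the constant multiple $\frac{(-1)^m\Be(\al+1,\be+1)}{\Be(\al+m+1,\be+1)}\,P_m^{(\al,\be+1)}(x)$, and $h$ turns into the polynomial $\wt{h}(x)\eqdef h((x+1)/2)$ of the same degree $\le m$.

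The key algebraic step is to split off the endpoint value at $x=-1$. The Taylor expansion of $\wt{h}$ there gives $\wt{h}(x)=\wt{h}(-1)+(x+1)q(x)$ with $q$ a polynomial of degree $\le m-1$. Substituting this into the transformed integral, the term $(x+1)q(x)$ combines with $(1+x)^\be$ to produce the factor $(1+x)^{\be+1}$, so that this part equals $\int_{-1}^1 q(x)\,P_m^{(\al,\be+1)}(x)\,(1-x)^\al(1+x)^{\be+1}\,\dif{}x$. This is precisely the pairing of $q$ against $P_m^{(\al,\be+1)}$ with respect to the orthogonality weight $(1-x)^\al(1+x)^{\be+1}$ of that Jacobi family; since $\deg q<m$, orthogonality forces it to vanish.

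The remaining contribution is $\wt{h}(-1)=h(0)$ times $\int_{-1}^1 P_m^{(\al,\be+1)}(x)(1-x)^\al(1+x)^\be\,\dif{}x$, which is exactly evaluated by~\eqref{eq:Jacobi_with_incomplete_weight} as $2^{\al+\be+1}(-1)^m\Be(\al+m+1,\be+1)$. Collecting the three prefactors — the factor $\frac{(-1)^m\Be(\al+1,\be+1)}{\Be(\al+m+1,\be+1)}$ coming from $R_m^{(\al,\be)}$, the Jacobian $2^{-\al-\be-1}$, and the value of the last integral — the powers of $2$ cancel, the two signs $(-1)^m$ cancel, and $\Be(\al+m+1,\be+1)$ cancels, leaving $\Be(\al+1,\be+1)\,h(0)$. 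Dividing by $\Be(\al+1,\be+1)$ yields $h(0)$, as claimed.

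I do not expect a genuine obstacle here; the only point requiring care is the degree bookkeeping. The argument works for $\deg h\le m$, and not merely for $\deg h<m$, precisely because peeling off the constant term at $x=-1$ and absorbing one factor of $(1+x)$ lowers the degree to $\le m-1$, which is exactly what orthogonality against $P_m^{(\al,\be+1)}$ demands.
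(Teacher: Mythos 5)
Your proof is correct and follows essentially the same route as the paper's: both peel off the value of $h$ at the left endpoint (writing $h$ as that constant plus the integration variable times a polynomial of degree $\le m-1$), kill the remainder by Jacobi orthogonality against $P_m^{(\al,\be+1)}$ with the weight $(1-x)^\al(1+x)^{\be+1}$, and evaluate the constant's contribution via~\eqref{eq:Jacobi_with_incomplete_weight}. Whether the affine substitution $x=2t-1$ is performed before or after the splitting is immaterial.
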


The polynomials of degree $\le m$, considered as square-integrable functions on the interval $(0,1)$ with the normalized weight
$\frac{1}{\Be(\al+1,\be+1)}(1-t)^\al t^\be$, form a RKHS.
Formula~\eqref{eq:R_repr_on_interval} means that $R_m^{(\al,\be)}$ is the RK of this space at the point $0$.

As a particular case of~\eqref{eq:R_repr_on_interval},
for every $k$ in $\bNz$ with $k\le m$,
\begin{equation}\label{eq:int_R_power}
\frac{1}{\Be(\al+1,\be+1)}
\int_0^1 R_m^{(\al,\be)}(t)
(1-t)^\al t^{\be+k}\,
\dif{}t
=\de_{k,0}.
\end{equation}

\subsection*{Weighted mean value property of homogeneously polyanalytic functions}

We denote by $\mu$ the Lebesgue measure on $\bC^n$,
by $\bS_n$ the unit sphere in $\bC^n$,
and by $\mu_{\bS_n}$ the (non-normalized) area measure on $\bS_n$.
It is well known~\cite[Section~1.4]{Rudin1980} that
\[
\mu(\bB_n)=\frac{\pi^n}{n!},\qquad
\mu_{\bS_n}(\bS_n) = \frac{2\pi^n}{(n-1)!},
\]
and
\begin{equation}
\label{eq:integral_of_monomials_over_sphere}
\int_{\bS_n} \zeta^j\conju{\zeta}^k
\dif{}\mu_{\bS_n}(\zeta)
=\frac{2\pi^nj!}{(n-1+|j|)!}\cdot\,\de_{j,k}\qquad(j,k\in\bNz^n).
\end{equation}
Given an integrable function $f$ on $\bB_n$,
its integral over $\bB_n$ can be written as
\begin{equation}\label{eq:integral_over_ball}
\int_{\bB_n}f\,\dif{}\mu
=\int_0^1 r^{2n-1}
\left( 
\int_{\bS_n} f(r\zeta)\,\dif{}\mu_{\bS_n}(\zeta)
\right)\dif{}r.
\end{equation}
For $\al>-1$, we denote by $\mu_\al$ the Lebesgue measure on $\bB_n$ with the standard radial weight:
\begin{equation}\label{eq:mu_alpha}
\dif{}\mu_{\al}(z) = c_{\al} (1-|z|^2)^\al \, \dif\mu(z).
\end{equation}
The normalizing constant $c_\al$ is chosen so that $\mu_{\al}(\bB_n)=1$:
\begin{equation}\label{def:constant_weight_c_alpha}
c_\al \eqdef \frac{\Ga(n+\al+1)}{\pi^n\,\Ga(\al+1)}.
\end{equation}

\begin{theorem}\label{thm:weighted_mean_value_property}
Let $f\in\cA_m(\bB_n)$
such that $f\in L^1(\bB_n,\mu_\al)$.
Then
\begin{equation}\label{eq:weighted_mean_value_property}
f(0)=\int_{\bB_n}f(z)
R_{m-1}^{(\al,n-1)}(|z|^2)\,\dif\mu_\al(z).
\end{equation}
\end{theorem}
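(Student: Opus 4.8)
The plan is to reduce the ball integral to the one-dimensional reproducing property of the polynomial $R_{m-1}^{(\al,n-1)}$ on the interval $(0,1)$ recorded in Proposition~\ref{prop:R_repr_on_interval}. The bridge between the two is the spherical-average function
\[
\Phi(r)\eqdef\int_{\bS_n}f(r\zeta)\,\dif\mu_{\bS_n}(\zeta)\qquad(0\le r<1),
\]
and the key structural fact I would establish first is that, thanks to homogeneous polyanalyticity, $\Phi$ is exactly a polynomial in $r^2$ of degree at most $m-1$.

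First I would expand $f$. By Theorem~\ref{thm:Am_characterization} and Corollary~\ref{cor:Am_into_power_series}, on $\bB_n$ we may write $f(z)=\sum_{p\in\bNz^n}\sum_{|k|<m}\be_{p,k}\,z^p\conjz^k$, the series converging uniformly on compact subsets of $\bB_n$. For a fixed radius $r\in[0,1)$ the sphere $r\bS_n$ is such a compact subset and $f$ is continuous there, so I may integrate term by term over $\bS_n$. Applying the orthogonality relation~\eqref{eq:integral_of_monomials_over_sphere}, only the diagonal terms $p=k$ survive, and I obtain
\[
\Phi(r)=\sum_{|k|<m}\be_{k,k}\,\frac{2\pi^n\,k!}{(n-1+|k|)!}\,r^{2|k|}.
\]
Since the sum ranges only over $|k|<m$, this is a genuine polynomial in $r^2$; denote it $Q(r^2)$, so that $\deg Q\le m-1$ and $Q(0)=\be_{0,0}\,\mu_{\bS_n}(\bS_n)=f(0)\,\tfrac{2\pi^n}{(n-1)!}$.

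Next I would rewrite the target integral in polar coordinates. Because $R_{m-1}^{(\al,n-1)}$ is a bounded polynomial and $f\in L^1(\bB_n,\mu_\al)$, the integrand is $\mu$-integrable, so formula~\eqref{eq:integral_over_ball} applies; peeling off the radial weight and substituting $t=r^2$ turns the ball integral into
\[
\int_{\bB_n}f(z)R_{m-1}^{(\al,n-1)}(|z|^2)\,\dif\mu_\al(z)
=\frac{c_\al}{2}\int_0^1 Q(t)\,R_{m-1}^{(\al,n-1)}(t)\,(1-t)^\al\,t^{\,n-1}\,\dif t.
\]
Now I apply Proposition~\ref{prop:R_repr_on_interval} with parameters $(m-1,\al,n-1)$ to the polynomial $Q$ of degree $\le m-1$, which evaluates the last integral as $\Be(\al+1,n)\,Q(0)$. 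Substituting $Q(0)$ and simplifying the constant $\frac{c_\al}{2}\,\Be(\al+1,n)\,\frac{2\pi^n}{(n-1)!}$, which equals $1$ by the definition of $c_\al$ in~\eqref{def:constant_weight_c_alpha} together with $\Be(\al+1,n)=\Ga(\al+1)(n-1)!/\Ga(n+\al+1)$, yields exactly $f(0)$.

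The step I expect to require the most care is the identity $\Phi(r)=Q(r^2)$: it is precisely here that the hypothesis $|k|<m$ (homogeneous polyanalyticity, rather than an arbitrary polyanalytic multi-order) forces a \emph{finite} diagonal sum and hence a polynomial of controlled degree, which is what makes $Q$ admissible in Proposition~\ref{prop:R_repr_on_interval}. The only analytic subtlety is the interchange of summation and integration, which I circumvent cleanly by integrating over each compact sphere $r\bS_n$ first, using the uniform convergence there, rather than attempting to integrate the full series over $\bB_n$ at once; the $L^1$ hypothesis is then used only to license the polar decomposition~\eqref{eq:integral_over_ball} and to guarantee finiteness of the resulting one-dimensional integral.
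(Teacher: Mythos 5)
Your proof is correct and follows essentially the same route as the paper's: the power-series expansion of Corollary~\ref{cor:Am_into_power_series}, the sphere orthogonality~\eqref{eq:integral_of_monomials_over_sphere}, polar coordinates~\eqref{eq:integral_over_ball}, and the reproducing property of $R_{m-1}^{(\al,n-1)}$. The only difference is organizational: the paper truncates to $s\bB_n$ and lets $s\to1$ to justify interchanging the series with the integral, whereas you integrate over each compact sphere $r\bS_n$ first, identify the spherical mean as a polynomial $Q(r^2)$ of degree at most $m-1$, and then apply Proposition~\ref{prop:R_repr_on_interval} once --- an equally valid (and slightly tidier) bookkeeping of the same argument.
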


\begin{proof}
We represent $f$ in the form~\eqref{eq:Am_into_power_series} with $a=0$, then make the change of variables $z=r\zeta$ with $0\le r<1$,
$\zeta\in\bS_n$:
\begin{equation}\label{eq:polyanalytic_series_in_spheric_coordinates}
f(z)
=\sum_{j\in\bNz^n}
\sum_{\substack{k\in\bNz^n \\ |k|<m}}
\be_{j,k} r^{|j|+|k|} \zeta^j\conju{\zeta}^k.
\end{equation}
For every $s$ in $(0,1)$, let $I_s$ be the integral similar to the right-hand side of~\eqref{eq:weighted_mean_value_property},
but over the ball $s\bB_n$:
\[
I_s \eqdef \int_{s\bB_n}f(z)
R_{m-1}^{(\al,n-1)}(|z|^2)\,\dif\mu_\al(z).
\]
Since the series~\eqref{eq:polyanalytic_series_in_spheric_coordinates} converges uniformly over $r$ in $[0,s]$ and $\zeta$ in $\bS_n$,
it can be interchanged with the integral over $s\bB_n$.
Then we apply~\eqref{eq:integral_over_ball}
and~\eqref{eq:integral_of_monomials_over_sphere}:
\begin{align*}
I_s
&=c_\al \sum_{j\in\bNz^n}
\sum_{\substack{k\in\bNz^n \\ |k|<m}}
\be_{j,k}
\int_0^s r^{2n-1+|j|+|k|}
R_{m-1}^{(\al,n-1)}(r^2)(1-r^2)^\al
\left(\int_{\bS_n} \zeta^j\conju{\zeta}^k\,
\dif{}\mu_{\bS_n}(\zeta) \right)\dif{}r
\\
&= c_\al
\sum_{\substack{k\in\bNz^n \\ |k|<m}}
\be_{k,k}\cdot
\frac{\pi^n\,k!}{(n-1+|k|)!}
\int_0^s R_{m-1}^{(\al,n-1)}(t)
(1-t)^\al t^{n-1+|k|}\,\dif{}t.
\end{align*}
The condition $f\in L^1(\bB_n,\mu_\al)$
implies that $I_s\to I_1$, as $s\to 1$.
Passing to this limit and using~\eqref{eq:int_R_power},
we finally obtain
\begin{align*}
I_1
&=\frac{\Ga(n+\al+1)}{\Ga(\al+1)}
\sum_{\substack{k\in\bNz^n \\ |k|<m}}
\be_{k,k}\cdot
\frac{k!}{(n-1+|k|)!}
\int_0^1 R_{m-1}^{(\al,n-1)}(t)
(1-t)^\al t^{n-1+|k|}\,\dif{}t
\\
&=\frac{\Ga(\al+n+1)}{\Ga(\al+1)} \sum_{|k|<m} \be_{k,k} \frac{k!}{(n-1+|k|)!} \cdot 
\de_{k,0}\,\Be(\al+1,n)
=\be_{0,0}=f(0).
\qedhere
\end{align*}
\end{proof}

Here is an analog of~\eqref{eq:weighted_mean_value_property}
for an arbitrary ball and for $\al=0$.
\begin{corollary}\label{cor:mean_value_property}
Let $\Om$ be an open subset of $\bC^n$,
$f\in\cA_m(\Om)$,
$a\in\Om$,
and $r>0$ such that $a+r\bB_n\subseteq\Om$.
Suppose that $f\in L^1(a+r\bB_n,\mu)$.
Then
\begin{equation}\label{eq:mean_value_property}
f(a)=\frac{n!}{\pi^n}\,\frac{1}{r^{2n}}\int_{a+r\bB_n}f(z)
R_{m-1}^{(0,n-1)}\left(\frac{|z-a|^2}{r^2}\right)\,\dif\mu(z).
\end{equation}
\end{corollary}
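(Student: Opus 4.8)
The plan is to reduce the statement to Theorem~\ref{thm:weighted_mean_value_property} with weight parameter $\al=0$, via the affine change of variables that carries $a+r\bB_n$ onto the unit ball. Concretely, I would introduce $g\colon\bB_n\to\bC$ by $g(w)\eqdef f(a+rw)$, so that $g(0)=f(a)$, and then verify the two hypotheses required to apply the theorem to $g$ (namely that $g$ is $m$-analytic and integrable against $\mu_0$), before reading off the conclusion and substituting back.

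First I would check that $g\in\cA_m(\bB_n)$. By Theorem~\ref{thm:Am_characterization} it suffices to produce a representation of the form~\eqref{eq:polyanalytic_function_as_polynomial}. Writing $f(z)=\sum_{|j|<m}h_j(z)\,\conju{z}^j$ with $h_j\in\cA(\Om)$, one obtains
\[
g(w)=\sum_{|j|<m}h_j(a+rw)\,(\conju{a}+r\conju{w})^j,
\]
where each $w\mapsto h_j(a+rw)$ is analytic on $\bB_n$ and each factor $(\conju{a}+r\conju{w})^j$ is a polynomial of degree $|j|<m$ in $\conju{w}$; regrouping the terms exhibits $g$ as an element of $\wt{\cA}_m(\bB_n)=\cA_m(\bB_n)$. (Alternatively, this is exactly the affine invariance already captured by Proposition~\ref{prop:Am_is_invariant_under_linear_changes_of_variables} composed with a translation.)

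Next I would confirm integrability. Since $(1-|w|^2)^0\equiv1$, the measure $\mu_0$ differs from Lebesgue measure only by the constant $c_0=n!/\pi^n$ given by~\eqref{def:constant_weight_c_alpha} at $\al=0$. The substitution $z=a+rw$, whose real Jacobian on $\bR^{2n}$ is $r^{2n}$, yields
\[
\int_{\bB_n}|g|\,\dif\mu_0=\frac{c_0}{r^{2n}}\int_{a+r\bB_n}|f|\,\dif\mu<\infty,
\]
so $g\in L^1(\bB_n,\mu_0)$. Applying Theorem~\ref{thm:weighted_mean_value_property} to $g$ with $\al=0$ then gives
\[
f(a)=g(0)=\int_{\bB_n}g(w)\,R_{m-1}^{(0,n-1)}(|w|^2)\,\dif\mu_0(w).
\]
Undoing the substitution by means of $|w|^2=|z-a|^2/r^2$ and $\dif\mu_0(w)=c_0\,\dif\mu(w)=c_0 r^{-2n}\,\dif\mu(z)$ transforms the right-hand side into precisely~\eqref{eq:mean_value_property}.

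There is no genuine obstacle here; the argument is essentially bookkeeping. The only points demanding care are obtaining the correct real Jacobian factor $r^{2n}$ (the domain lives in $\bR^{2n}$, not $\bR^n$) and identifying the normalizing constant of $\mu_0$ as $c_0=n!/\pi^n$. These two ingredients combine to produce the prefactor $\frac{n!}{\pi^n}\,\frac{1}{r^{2n}}$ appearing in the statement.
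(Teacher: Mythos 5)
Your proof is correct and follows exactly the route the paper intends: the paper states this corollary without proof, treating it as an immediate consequence of Theorem~\ref{thm:weighted_mean_value_property} under the affine substitution $z=a+rw$ with $\al=0$. Your verification of the two hypotheses (translation/dilation invariance of $\cA_m$ via the $\wt{\cA}_m$ representation, and the Jacobian factor $r^{2n}$ together with $c_0=n!/\pi^n$) supplies precisely the bookkeeping the authors left implicit.
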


\subsection*{Bergman spaces of homogeneously polyanalytic functions}

In the rest of this section, we suppose that $\Om$, $m$, $W$, $\nu$ are like in Definition~\ref{def:Bergman_polyanalytic}.
Using~\eqref{eq:mean_value_property},
it is easy to prove the upcoming Lemma~\ref{lem:evaluation_functionals_on_Am2_are_bounded} and Proposition~\ref{prop:Am2_is_RKHS}.
See similar proofs for the one-dimensional case in~\cite[Lemma~4.3, Proposition~4.4]%
{BarreraMaximenkoRamos2021}.

\begin{lemma}\label{lem:evaluation_functionals_on_Am2_are_bounded}
Let $K$ be a compact subset of $\Om$.
Then there exists a number $C_{m,W,K}>0$
such that for every $f$ in $\cA_m^2(\Om,\nu)$
and every $z$ in $K$,
\begin{equation}\label{eq:evaluation_functionals_on_Am_are_bounded}
|f(z)|\le C_{m,W,K} \|f\|_{\cA_m^2(\Om,\nu)}.
\end{equation}
\end{lemma}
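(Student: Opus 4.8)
The plan is to use the mean value property from Corollary~\ref{cor:mean_value_property} to rewrite the pointwise value $f(a)$ as an integral over a small ball centered at $a$, and then to dominate that integral by the $L^2(\nu)$-norm via Cauchy--Schwarz. Two compactness reductions will make the resulting estimate uniform over $K$.

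First I would fix a single radius that works for all centers at once. Since $K$ is compact and $\Om$ is open, the distance from $K$ to $\bC^n\setminus\Om$ is strictly positive, so I can choose $r>0$ for which the closed neighborhood $K_r\eqdef\{w\in\bC^n\colon\operatorname{dist}(w,K)\le r\}$ is a compact subset of $\Om$. Then $a+r\bB_n\subseteq K_r\subseteq\Om$ for every $a\in K$, so the hypotheses of Corollary~\ref{cor:mean_value_property} hold uniformly in $a$. Applying that corollary and the triangle inequality gives
\[
|f(a)|\le\frac{n!}{\pi^n r^{2n}}\int_{a+r\bB_n}|f(z)|\,\Bigl|R_{m-1}^{(0,n-1)}\bigl(\tfrac{|z-a|^2}{r^2}\bigr)\Bigr|\,\dif\mu(z).
\]
On $a+r\bB_n$ the argument $\tfrac{|z-a|^2}{r^2}$ lies in $[0,1)$, and $R_{m-1}^{(0,n-1)}$ is a fixed polynomial, so its modulus is bounded there by a constant $M_{m,n}\eqdef\max_{t\in[0,1]}|R_{m-1}^{(0,n-1)}(t)|$ depending only on $m$ and $n$.

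Next I would pass to the weighted norm. Because $W$ is continuous and strictly positive, it attains a positive minimum $W_{\min}\eqdef\min_{z\in K_r}W(z)>0$ on the compact set $K_r$. Writing $|f|^2\,\dif\mu=|f|^2W^{-1}\,\dif\nu$ and using $W\ge W_{\min}$ on $a+r\bB_n$, Cauchy--Schwarz with respect to $\mu$ yields
\[
\int_{a+r\bB_n}|f(z)|\,\dif\mu(z)\le\bigl(\mu(a+r\bB_n)\bigr)^{1/2}\Bigl(\int_{a+r\bB_n}|f|^2\,\dif\mu\Bigr)^{1/2}\le\Bigl(\frac{\pi^n r^{2n}}{n!}\Bigr)^{1/2}\frac{\|f\|_{\cA_m^2(\Om,\nu)}}{\sqrt{W_{\min}}}.
\]
Combining the two displays, the powers of $r$ and the factorial partly cancel, and all dependence on $a$ disappears; one obtains~\eqref{eq:evaluation_functionals_on_Am_are_bounded} with the explicit constant $C_{m,W,K}=M_{m,n}\sqrt{n!/(\pi^n r^{2n}W_{\min})}$, depending only on $m$, $W$, and $K$ (through $r$ and $W_{\min}$), but not on $a\in K$ nor on $f$.

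I expect no genuine obstacle here: once Corollary~\ref{cor:mean_value_property} is available, the estimate is essentially bookkeeping with Cauchy--Schwarz and the boundedness of a polynomial on $[0,1]$. The only point that needs care is \emph{uniformity} over $K$, which is precisely what forces the two compactness steps — selecting one radius $r$ that keeps $a+r\bB_n$ inside $\Om$ simultaneously for all $a\in K$, and bounding $1/W$ uniformly on the enlarged compact set $K_r$ rather than merely on $K$.
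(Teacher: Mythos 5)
Your proof is correct and follows exactly the route the paper intends: it sketches no details but states that the lemma follows from the mean value property~\eqref{eq:mean_value_property} (as in the one-dimensional case of~\cite{BarreraMaximenkoRamos2021}), which is precisely your combination of Corollary~\ref{cor:mean_value_property}, the boundedness of $R_{m-1}^{(0,n-1)}$ on $[0,1]$, and Cauchy--Schwarz with $W$ bounded below on a compact neighborhood of $K$. Your two compactness reductions (a uniform radius $r$ and the minimum of $W$ on $K_r$) are exactly the points that make the estimate uniform, and your explicit constant is valid.
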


\begin{proposition}\label{prop:Am2_is_RKHS}
$\cA_m^2(\Om,\nu)$ is a RKHS.
\end{proposition}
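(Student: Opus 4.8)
The plan is to verify the two standard hypotheses that turn a Hilbert space of functions into a RKHS: that $\cA_m^2(\Om,\nu)$ is complete, and that each point evaluation is a bounded linear functional. The second hypothesis is exactly Lemma~\ref{lem:evaluation_functionals_on_Am2_are_bounded}, so the real content is completeness, after which the reproducing kernel is produced by the Riesz representation theorem.

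First I would take a Cauchy sequence $(f_j)_{j\in\bN}$ in $\cA_m^2(\Om,\nu)$. Since $L^2(\Om,\nu)$ is complete, the sequence converges in norm to some $f\in L^2(\Om,\nu)$, and it only remains to show that $f$ can be chosen in $\cA_m(\Om)$. Applying Lemma~\ref{lem:evaluation_functionals_on_Am2_are_bounded} to the differences $f_i-f_j$ on an arbitrary compact $K\subseteq\Om$ shows that $(f_j)$ is uniformly Cauchy on $K$, hence converges locally uniformly to a continuous function that agrees with the $L^2$-limit almost everywhere. Thus I may replace $f$ by this continuous locally uniform limit without changing its class in $L^2(\Om,\nu)$.

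The main obstacle is to show that $\cA_m(\Om)$ is closed under locally uniform convergence, i.e.\ that this limit $f$ is again $m$-analytic. I would argue locally: fix $a\in\Om$ and $r>0$ with $a+r\bB_n\subseteq\Om$. By Corollary~\ref{cor:Am_into_power_series}, each $f_j$ expands on $a+r\bB_n$ as a power series in the multipowers $(z-a)^p(\conju z-\conju a)^k$ carrying \emph{only} exponents with $|k|<m$, and the coefficients are continuous functionals of $f_j$: they are recovered by integrating $f_j$ over spheres centered at $a$ against the corresponding monomials and extracting radial powers, using the orthogonality relation~\eqref{eq:integral_of_monomials_over_sphere} (exactly as in the proof of Theorem~\ref{thm:weighted_mean_value_property}). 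Since these coefficient functionals are continuous with respect to locally uniform convergence, and since for each $f_j$ every coefficient attached to a $\conju z$-multipower of degree $\ge m$ vanishes, the same vanishing holds for $f$. Hence $f$ admits an expansion of the very same shape, so $f\in\wt{\cA}_m(a+r\bB_n)=\cA_m(a+r\bB_n)$ by Theorem~\ref{thm:Am_characterization}. As $a$ was arbitrary, $f\in\cA_m(\Om)$, and being the $L^2$-limit of the $f_j$ it lies in $\cA_m^2(\Om,\nu)$; this establishes completeness. (Alternatively one could pass to the limit in the mean value identity of Corollary~\ref{cor:mean_value_property}, but the coefficient argument yields membership in the class directly.)

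Finally I would invoke the Riesz representation theorem. For each $z\in\Om$ the functional $f\mapsto f(z)$ is bounded on $\cA_m^2(\Om,\nu)$ by Lemma~\ref{lem:evaluation_functionals_on_Am2_are_bounded}, so there is a unique $K_z\in\cA_m^2(\Om,\nu)$ with $f(z)=\langle f,K_z\rangle$ for all $f\in\cA_m^2(\Om,\nu)$. The family $(K_z)_{z\in\Om}$ is then the reproducing kernel of the space (with $K(z,w)=\conju{K_z(w)}=K_w(z)$), so $\cA_m^2(\Om,\nu)$ is a RKHS.
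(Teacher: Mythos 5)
Your skeleton is the one the paper intends (the paper delegates the details to the one-dimensional proofs in \cite{BarreraMaximenkoRamos2021}): point evaluations are bounded by Lemma~\ref{lem:evaluation_functionals_on_Am2_are_bounded}, a Cauchy sequence therefore converges both in $L^2(\Om,\nu)$ and locally uniformly, and the Riesz representation theorem produces the kernel. The genuine gap is in the only step carrying real content: the closedness of $\cA_m(\Om)$ under locally uniform limits. Your coefficient argument is circular. The ``coefficient functionals'' you invoke are defined only on functions already known to possess a convergent expansion $\sum_{j,k}\be_{j,k}(z-a)^j(\conju{z}-\conju{a})^k$; for such functions one may expand the sphere pairings and read off powers of $r$. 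The limit $f$, however, is at this stage only a continuous function, so $r\mapsto\int_{\bS_n}f(a+r\zeta)\,\zeta^q\conju{\zeta}^p\,\dif\mu_{\bS_n}(\zeta)$ is merely a continuous function of $r$, not known to be a power series, and ``extracting radial powers'' from it is undefined. Consequently the step ``the same vanishing holds for $f$, hence $f$ admits an expansion of the very same shape'' assumes exactly what must be proven, namely that $f$ admits an expansion at all. This is not a formality: by Stone--Weierstrass, every continuous function on $\conju{\bB_n}$ --- including nowhere differentiable ones --- is a uniform limit of polynomials in $z$ and $\conju{z}$, i.e., of functions having finite expansions of this type; so the existence of an expansion for a locally uniform limit is precisely the crux, and the fixed bound $|k|<m$ must enter in an essential analytic way, not merely through the vanishing of formal ``coefficients'' of $f$.

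A secondary inaccuracy: even for functions that do have expansions, the coefficients are not recovered ``exactly as in the proof of Theorem~\ref{thm:weighted_mean_value_property}''. The orthogonality relation~\eqref{eq:integral_of_monomials_over_sphere} kills the off-diagonal terms in that proof only because no extra monomial is inserted there. Pairing $f(a+r\zeta)$ against $\zeta^q\conju{\zeta}^p$ instead retains every $(j,k)$ with $j-k=p-q$, and distinct such pairs can share the same total degree (for instance $(j,k)=((1,1),(0,2))$ and $(j,k)=((2,0),(1,1))$ in $\bC^2$), so a single pairing yields only Gram-type linear combinations and one must invert a finite linear system; this is fixable, but it is not what you wrote. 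As for the main gap, the standard repair is of a different nature: locally uniform convergence implies convergence in the sense of distributions, hence $\conju{D}^k f=0$ distributionally for every $k$ with $|k|=m$; one then needs a regularity ingredient guaranteeing that $f$ is smooth --- either elliptic regularity for this constant-coefficient system, or the known closedness of polyanalytic classes under locally uniform limits (cf.~\cite{Balk1991}, used through the inclusion $\cA_m(\Om)\subseteq\cA_{(m,\ldots,m)}(\Om)$) --- after which the equations hold classically and $f\in\cA_m(\Om)$.
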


As a corollary, the spaces $\cA_{(m)}^2(\Om,\nu)$ are also RKHS.

\begin{proposition}\label{prop:L2_is_sum_true_poly}
In the conditions of Definition~\ref{def:Bergman_polyanalytic},
suppose additionally that $\Om$ is bounded
and $\nu$ is finite.
Then
\begin{equation}\label{eq:L2_as_direct_sum}
L^2(\Om,\nu)
=\bigoplus_{m=1}^\infty\cA_{(m)}^2(\Om,\nu).
\end{equation}
\end{proposition}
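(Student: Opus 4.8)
The plan is to show that the closed subspaces $\cA_{(m)}^2(\Om,\nu)$ are pairwise orthogonal and that the Hilbert-space direct sum on the right-hand side of~\eqref{eq:L2_as_direct_sum} coincides with the closure of $\bigcup_{m=1}^\infty\cA_m^2(\Om,\nu)$; the whole statement then reduces to proving that this union is dense in $L^2(\Om,\nu)$. First I would record the nesting $\cA_{m-1}^2(\Om,\nu)\subseteq\cA_m^2(\Om,\nu)$, which is immediate: if $|k|=m$, then $k=k'+e_p$ with $|k'|=m-1$, so $\conju{D}^k f=\conju{D}^{e_p}\conju{D}^{k'}f$ vanishes whenever $f\in\cA_{m-1}(\Om)$ (alternatively, $\wt{\cA}_{m-1}(\Om)\subseteq\wt{\cA}_m(\Om)$ since $|j|<m-1$ implies $|j|<m$, and one invokes Theorem~\ref{thm:Am_characterization}). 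Since each $\cA_m^2(\Om,\nu)$ is complete (Proposition~\ref{prop:Am2_is_RKHS}), the subspace $\cA_{m-1}^2(\Om,\nu)$ is closed in $\cA_m^2(\Om,\nu)$, so the defining relation $\cA_m^2=\cA_{m-1}^2\oplus\cA_{(m)}^2$ is a genuine orthogonal decomposition; by induction $\cA_m^2(\Om,\nu)=\bigoplus_{k=1}^m\cA_{(k)}^2(\Om,\nu)$, a finite, hence closed, orthogonal sum. As $\cA_{(k)}^2\subseteq\cA_{m-1}^2$ for $k<m$ while $\cA_{(m)}^2\perp\cA_{m-1}^2$, the pieces are pairwise orthogonal, and the linear span of all of them equals $\bigcup_{m=1}^\infty\cA_m^2(\Om,\nu)$. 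Therefore $\bigoplus_{m=1}^\infty\cA_{(m)}^2(\Om,\nu)=\overline{\bigcup_{m=1}^\infty\cA_m^2(\Om,\nu)}$, and only density remains.

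For the density step my plan is to approximate by polynomials in $z$ and $\conju{z}$. Because $\Om$ is bounded, $\overline{\Om}$ is compact, and the polynomials in the real variables $\Re z_1,\Im z_1,\ldots,\Re z_n,\Im z_n$ — equivalently, the polynomials in $z_1,\conju{z_1},\ldots,z_n,\conju{z_n}$ — form a subalgebra of $C(\overline{\Om})$ containing the constants and separating points, so by the Stone--Weierstrass theorem it is uniformly dense in $C(\overline{\Om})$. Since $\nu$ is finite, uniform convergence on $\overline{\Om}$ implies convergence in $L^2(\Om,\nu)$, and $C_c(\Om)$ is dense in $L^2(\Om,\nu)$; hence the polynomials in $z,\conju{z}$ are dense in $L^2(\Om,\nu)$. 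Finally, grouping such a polynomial by powers of $\conju{z}$ writes it as $\sum_{|j|<m}h_j(z)\,\conju{z}^j$ with each $h_j$ a holomorphic polynomial in $z$ and $m$ taken large enough; by Definition~\ref{def:wtAm} and Theorem~\ref{thm:Am_characterization} it lies in $\cA_m(\Om)$, and being bounded on the bounded set $\Om$ it lies in $\cA_m^2(\Om,\nu)$. Thus $\bigcup_m\cA_m^2(\Om,\nu)$ contains a dense subset of $L^2(\Om,\nu)$, which finishes the proof.

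The orthogonal-complement bookkeeping is routine; the only real content is the density step, whose one delicate point is the passage from Stone--Weierstrass (uniform approximation on $\overline{\Om}$) to approximation in $L^2(\Om,\nu)$. I expect this to be painless here precisely because the finiteness of $\nu$ and the boundedness of $\Om$ enter exactly at this spot: the sup-norm dominates the $L^2(\Om,\nu)$-norm up to the factor $\nu(\Om)^{1/2}$, and the same two hypotheses guarantee that the polynomials, being bounded, are square-integrable and hence genuinely belong to the spaces $\cA_m^2(\Om,\nu)$.
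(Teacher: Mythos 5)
Your proof is correct and follows essentially the same route as the paper: the paper's proof consists precisely of your density step, namely Stone--Weierstrass on $\overline{\Om}$, density of the compactly supported continuous functions in $L^2(\Om,\nu)$, and domination of the $L^2(\Om,\nu)$-norm by a constant multiple of the sup-norm (where boundedness of $\Om$ and finiteness of $\nu$ enter). The orthogonality and nesting bookkeeping you spell out in your first paragraph, as well as the observation that polynomials in $z,\conju{z}$ actually lie in $\bigcup_m\cA_m^2(\Om,\nu)$, are left implicit in the paper.
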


\begin{proof}
This is a simple consequence of three facts:
1) the continuous functions with compact supports form a dense subset of $L^2(\Om,\nu)$;
2) by the Stone--Weierstrass theorem,
every continuous function on the closure of $\Om$ can be uniformly approximated by polynomials in $z_1,\ldots,z_n,\conju{z_1},\ldots,\conju{z_n}$; 
and 3) the norm of $L^2(\Om,\nu)$ can be estimated from above by a constant multiple of the maximum-norm.
\end{proof}

In the one-dimensional case,
the ``true-$m$-analytic'' spaces
$\cA_{(m)}^2$ were studied by Ramazanov~\cite{Ramazanov1999} and Vasilevski~\cite{Vasilevski2000polyFock,Vasilevski2008book}.
According to~\cite{Vasilevski2000polyFock}
(see also another proof in~\cite{MaximenkoTelleria2020}),
the decomposition~\eqref{eq:L2_as_direct_sum}
holds for the poly-Fock space
$\cA_m^2(\bC,\enumber^{-|z|^2}\,\dif\mu)$.
On the other hand,
if $\Om$ is the upper halfplane $\bH_1$ with the Lebesgue measure,
then $L^2(\bH_1)$ decomposes into the orthogonal sum of the spaces $\cA_{(m)}^2(\bH_1)$ and their conjugates~\cite[Theorem~3.3.5]{Vasilevski2008book},
and~\eqref{eq:L2_as_direct_sum} fails.
It is natural to ask if Proposition~\ref{prop:L2_is_sum_true_poly}
remains true if $\nu(\Om)<+\infty$,
without assuming $\Om$ to be bounded.

\section{Pushforward reproducing kernel}
\label{sec:pushforward_RK}

In this section we show how to transform a RK using a weighted change of variables.
First, we deal with abstract positive kernels~\cite{Aronszajn1950},
then we consider reproducing kernels
in Hilbert spaces.

Let $X$ be a non-empty set.
We denote by $\bC^X$ the complex vector space of all functions $X\to\bC$ with pointwise operations.
A family $(K_x)_{x\in X}$ with values in $\bC^X$
is called a \emph{positive kernel} on $X$
if for every $m$ in $\bN$,
every $x_1,\ldots,x_m$ in $X$
and every $\al_1,\ldots,\al_m$ in $\bC$,
\[
\sum_{r,s=1}^m \al_r \overline{\al_s} K_{x_r}(x_s)\ge0.
\]

\begin{proposition}\label{prop:pusforward_kernel}
Let $X,Y$ be non-empty sets,
$\psi\colon Y\to X$
and $J\colon Y\to\bC$
be some functions,
and $(K_x)_{x\in X}$
be a positive kernel on $X$.
Then the family $(L_u)_{u\in Y}$,
defined by
\[
L_u(v)\eqdef
\overline{J(u)}\,J(v) K_{\psi(u)}(\psi(v)),
\]
is a positive kernel on $Y$.
\end{proposition}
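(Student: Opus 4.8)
The plan is to verify the defining inequality for a positive kernel directly, reducing it to the assumed positivity of $(K_x)_{x\in X}$ by absorbing the scalar factors coming from $J$ into the test coefficients. Fix $m\in\bN$, points $u_1,\ldots,u_m$ in $Y$, and scalars $\be_1,\ldots,\be_m$ in $\bC$; the goal is to show that $\sum_{r,s=1}^m \be_r\overline{\be_s}\,L_{u_r}(u_s)\ge 0$.

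First I would expand this sum using the definition of $L_u$:
\[
\sum_{r,s=1}^m \be_r\overline{\be_s}\,L_{u_r}(u_s)
=\sum_{r,s=1}^m \be_r\overline{\be_s}\,\overline{J(u_r)}\,J(u_s)\,K_{\psi(u_r)}(\psi(u_s)).
\]
The key observation is that the factor $\overline{J(u_r)}$ pairs naturally with $\be_r$ and the factor $J(u_s)$ with $\overline{\be_s}$. Introducing the new coefficients $\gamma_r\eqdef\be_r\,\overline{J(u_r)}$ and the points $x_r\eqdef\psi(u_r)$ in $X$, one has $\overline{\gamma_s}=\overline{\be_s}\,J(u_s)$, hence the algebraic identity $\gamma_r\overline{\gamma_s}=\be_r\overline{\be_s}\,\overline{J(u_r)}\,J(u_s)$. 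Substituting this into the displayed sum rewrites it as $\sum_{r,s=1}^m \gamma_r\overline{\gamma_s}\,K_{x_r}(x_s)$.

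Finally, since $(K_x)_{x\in X}$ is a positive kernel on $X$, its defining inequality applied to the points $x_1,\ldots,x_m$ and the scalars $\gamma_1,\ldots,\gamma_m$ yields $\sum_{r,s=1}^m \gamma_r\overline{\gamma_s}\,K_{x_r}(x_s)\ge 0$, which is exactly the inequality required for $(L_u)_{u\in Y}$.

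There is no genuine obstacle here; the whole content is the correct matching of conjugates. The only point demanding care is the conjugation bookkeeping: I must make sure the unconjugated factor $J(u_s)$ is absorbed into the conjugated coefficient $\overline{\be_s}$ and the conjugated factor $\overline{J(u_r)}$ into $\be_r$, so that the product $\gamma_r\overline{\gamma_s}$ emerges in the Hermitian form demanded by the convention $\sum_{r,s}\al_r\overline{\al_s}K_{x_r}(x_s)$ fixed in the definition of a positive kernel. I would double-check this against that convention before concluding.
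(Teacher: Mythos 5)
Your proof is correct and is essentially identical to the paper's own argument: both absorb the factors $\overline{J(u_r)}$ into new coefficients $\gamma_r\eqdef\overline{J(u_r)}\,\be_r$, set $x_r\eqdef\psi(u_r)$, and invoke the positivity of $(K_x)_{x\in X}$. The conjugation bookkeeping you flag is handled exactly as in the paper, so there is nothing to add.
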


\begin{proof}
Let $m\in\bN$, $u_1,\ldots,u_m\in Y$, $\al_1,\ldots,\al_m\in\bC$.
For every $s$ in $\{1,\ldots,m\}$ put
$x_s\eqdef\psi(u_s)$
and
$\be_s\eqdef\overline{J(u_s)}\,\al_s$.
Then
\[
\sum_{r,s=1}^m \al_r \overline{\al_s} L_{u_r}(u_s)
=\sum_{r,s=1}^m \be_r \overline{\be_s} K_{x_r}(x_s)
\ge0.
\qedhere
\]
\end{proof}

Let $X$ be a non-empty set.
We say that $H$ is a Hilbert space of functions on $X$ if $H$ is a vector subspace of $\bC^X$, provided with an inner product and complete with respect to the corresponding norm.
Furthermore, if $x\in X$, $\cK\in H$ and $\langle f,\cK\rangle = f(x)$ for every $f$ in $H$, then we say that $\cK$ is a reproducing kernel of $H$ at the point $x$.
In case of existence, this function is unique.

\begin{proposition}
\label{prop:pushforward_RK_one_point}
Let $X,Y$ be non-empty sets,
$\psi\colon Y\to X$
and $J\colon Y\to\bC$
be some functions,
$H_1$ be a Hilbert space of functions over $X$,
$H_2$ be a Hilbert space of functions over $Y$, and
\[
(Uf)(z)\eqdef J(z)f(\psi(z))
\]
be a well-defined unitary operator
mapping $H_1$ onto $H_2$.
Suppose that $u\in Y$
and $\cK$ be the reproducing kernel of $H_1$ at the point $\psi(u)$.
Then the function $\cL\colon Y\to\bC$, defined by the following rule,
is the reproducing kernel of $H_2$
at the point $u$:
\[
\cL(v)
\eqdef\overline{J(u)}\,J(v)\,\cK(\psi(v)).
\]
\end{proposition}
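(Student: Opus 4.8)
The plan is to recognize that $\cL$ is, up to a constant factor, the image of $\cK$ under the unitary $U$, and then to transport the reproducing property from $H_1$ to $H_2$ along $U$. First I would note that, by the definition of $U$,
\[
(U\cK)(v) = J(v)\,\cK(\psi(v)),
\]
so that $\cL(v) = \overline{J(u)}\,(U\cK)(v)$ for every $v$ in $Y$. Since $\cK$ belongs to $H_1$ and $U$ maps $H_1$ onto $H_2$, the function $U\cK$ lies in $H_2$; hence so does $\cL=\overline{J(u)}\,U\cK$. This settles the requirement that the candidate kernel be an element of the space.

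It then remains to check that $\langle g,\cL\rangle_{H_2}=g(u)$ for every $g$ in $H_2$. The key tool is surjectivity of $U$: every $g$ in $H_2$ can be written as $g=Uf$ for a (unique) $f$ in $H_1$. Pulling the scalar out of the conjugate-linear second slot and using that $U$ preserves inner products, I would compute
\[
\langle g,\cL\rangle_{H_2}
=\bigl\langle Uf,\ \overline{J(u)}\,U\cK\bigr\rangle_{H_2}
=J(u)\,\langle Uf,U\cK\rangle_{H_2}
=J(u)\,\langle f,\cK\rangle_{H_1}.
\]
By the reproducing property of $\cK$ at the point $\psi(u)$, the last inner product equals $f(\psi(u))$, so $\langle g,\cL\rangle_{H_2}=J(u)\,f(\psi(u))$. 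Finally I would identify this with $g(u)$: since $g=Uf$, the definition of $U$ gives $g(u)=(Uf)(u)=J(u)\,f(\psi(u))$, which matches exactly.

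The argument is essentially bookkeeping, so I do not expect a genuine obstacle; the only points requiring care are the complex conjugation (the factor $\overline{J(u)}$ sitting in the second argument re-emerges as $J(u)$ under the convention that $\langle\cdot,\cdot\rangle$ is linear in its first slot) and the explicit use of the surjectivity of $U$ to represent an arbitrary $g$ in $H_2$ as $Uf$. As a consistency check, note that the associated family $(\cL)_{u\in Y}$ obtained this way agrees with the pushforward kernel of Proposition~\ref{prop:pusforward_kernel}, which already guarantees positivity.
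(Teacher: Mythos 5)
Your proof is correct and follows essentially the same route as the paper: both identify $\cL=\overline{J(u)}\,U\cK$, use invertibility of $U$ to write an arbitrary $g\in H_2$ as $Uf$, and combine the reproducing property of $\cK$ at $\psi(u)$ with the unitarity of $U$ and the identity $g(u)=J(u)f(\psi(u))$. The only difference is cosmetic: you compute $\langle g,\cL\rangle_{H_2}$ forward to reach $g(u)$, while the paper runs the same chain of equalities starting from $g(u)$.
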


\begin{proof}
Let $g\in H_2$ and $f=U^{-1}g$.
Then
\[
g(u)= J(u)f(\psi(u))
= J(u)
\langle f,\cK\rangle_{H_1}
= \langle g, \overline{J(u)}\,U\cK\rangle_{H_2}.
\]
Defining $\cL$ by
$\cL(v)
=\overline{J(u)}(U\cK)(v)
=\overline{J(u)}J(v)\cK(\psi(v))$,
we get the RK of $H_2$ at $u$.
\end{proof}

\begin{proposition}\label{prop:pushforward_RK}
Let $X,Y$ be non-empty sets,
$\psi\colon Y\to X$
and $J\colon Y\to\bC$
be some functions,
$H_1$ be a Hilbert space of functions over $X$
with reproducing kernel $(K_x)_{x\in X}$,
$H_2$ be a Hilbert space of functions over $Y$, and
\[
(Uf)(z)\eqdef J(z)f(\psi(z))
\]
be a well-defined unitary operator
mapping $H_1$ onto $H_2$.
Then $H_2$ is a RKHS,
and its reproducing kernel
$(L_u)_{u\in Y}$ is given by
\[
L_u(v)=\overline{J(u)}\,J(v)\,
K_{\psi(u)}(\psi(v)).
\]
\end{proposition}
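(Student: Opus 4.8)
The plan is to derive Proposition~\ref{prop:pushforward_RK} directly from the two preceding results, which together already contain all the necessary content. The statement asserts two things: that $H_2$ is a RKHS, and that its reproducing kernel has the displayed product form. I would prove the second assertion first, since it immediately implies the first once we know a reproducing kernel exists at every point.

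First I would fix an arbitrary point $u\in Y$. By hypothesis, $H_1$ has a reproducing kernel $(K_x)_{x\in X}$, so in particular the function $K_{\psi(u)}$ is the reproducing kernel of $H_1$ at the point $\psi(u)\in X$. This is exactly the situation of Proposition~\ref{prop:pushforward_RK_one_point}, applied with $\cK\eqdef K_{\psi(u)}$: the hypotheses there (the same $\psi$, $J$, $H_1$, $H_2$, and the same unitary operator $U$) coincide verbatim with the hypotheses here. That proposition then produces a function $\cL\colon Y\to\bC$ which is the reproducing kernel of $H_2$ at $u$, given by
\[
\cL(v)=\overline{J(u)}\,J(v)\,\cK(\psi(v))
=\overline{J(u)}\,J(v)\,K_{\psi(u)}(\psi(v)).
\]
Setting $L_u\eqdef\cL$, this is precisely the claimed formula for $L_u(v)$.

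Since $u\in Y$ was arbitrary, $H_2$ admits a reproducing kernel at every point, hence $H_2$ is a RKHS and the family $(L_u)_{u\in Y}$ is its reproducing kernel. Optionally, as a consistency remark, one may note that $(L_u)_{u\in Y}$ is automatically a positive kernel: this also follows from Proposition~\ref{prop:pusforward_kernel} applied to the positive kernel $(K_x)_{x\in X}$, with the same $\psi$ and $J$, which is reassuring but not logically required for the RKHS conclusion.

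Honestly, there is no serious obstacle here: the proposition is a pointwise-to-global upgrade of Proposition~\ref{prop:pushforward_RK_one_point}, and the entire proof amounts to quantifying that earlier result over all $u\in Y$ and reading off the formula. The only thing to be careful about is the role of $U$ being unitary \emph{onto} $H_2$, which guarantees that $U^{-1}$ exists and that the inner-product manipulations in Proposition~\ref{prop:pushforward_RK_one_point} are valid; but this is already assumed in the statement, so no additional verification is needed.
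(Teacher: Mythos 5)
Your proof is correct and is exactly the argument the paper gives: the paper's proof of Proposition~\ref{prop:pushforward_RK} consists of the single sentence ``Apply Proposition~\ref{prop:pushforward_RK_one_point} at every point $u$ of $Y$,'' which you have simply spelled out. Your elaboration (fixing $u$, identifying $\cK=K_{\psi(u)}$, and reading off the formula) is faithful and adds no extraneous assumptions.
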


\begin{proof}
Apply Proposition~\ref{prop:pushforward_RK_one_point} at every point $u$ of $Y$.
\end{proof}

As a simple application of the this scheme, let us express the Berezin transform in $H_2$ via the Berezin transform in $H_1$.
Given a Hilbert space $H$, we denote by $\cB(H)$
the C*-algebra of all bounded linear operators acting in $H$.
Given a set $X$, we denote by $B(X)$ the Banach space of all bounded functions on $X$, with the supremum norm.
If $H$ is a RKHS over $X$ and its RK satisfies $\|K_x\|_H\ne0$ for every $x$ in $X$, then the
\emph{Berezin transform}
$\Ber_H\colon\cB(H)\to B(X)$
is defined by
\[
\Ber_H(A)(x)
\eqdef\frac{\langle AK_x,K_x\rangle_H}%
{\langle K_x,K_x\rangle_H}
\qquad(A\in\cB(H),\ x\in X).
\]

\begin{proposition}
\label{prop:Berezin_H2_in_terms_of_Berezin_H1}
In the conditions of Proposition~\ref{prop:pushforward_RK},
suppose that $\|K_x\|_{H_1}\ne0$
for every $x$ in $X$
and $J(u)\ne0$ for every $u$ in $Y$.
Then
\[
\Ber_{H_2}(A)(u)
=\Ber_{H_1}(U^\ast A U)(\psi(u))\qquad
(A\in\cB(H_2),\ u\in Y).
\]
\end{proposition}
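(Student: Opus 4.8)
The plan is to unwind the definition of the Berezin transform on $H_2$ and transport everything to $H_1$ via the unitary $U$. The key observations are that by Proposition~\ref{prop:pushforward_RK} the reproducing kernel of $H_2$ at the point $u$ equals $L_u = \overline{J(u)}\,U K_{\psi(u)}$ (this is exactly the formula coming out of the one-point statement in Proposition~\ref{prop:pushforward_RK_one_point}), and that $U$, being unitary, preserves inner products and satisfies $U^{-1}=U^\ast$. I should first note that the hypotheses $\|K_x\|_{H_1}\ne 0$ and $J(u)\ne 0$ guarantee $\|L_u\|_{H_2}\ne 0$, so that $\Ber_{H_2}$ is well defined; indeed $\|L_u\|_{H_2}=|J(u)|\,\|U K_{\psi(u)}\|_{H_2}=|J(u)|\,\|K_{\psi(u)}\|_{H_1}\ne 0$.

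For the numerator, I would write $L_u=\overline{J(u)}\,U K_{\psi(u)}$ and compute, for $A\in\cB(H_2)$,
\[
\langle A L_u, L_u\rangle_{H_2}
=|J(u)|^2\,\langle A\,U K_{\psi(u)},\,U K_{\psi(u)}\rangle_{H_2}
=|J(u)|^2\,\langle U^\ast A U\,K_{\psi(u)},\,K_{\psi(u)}\rangle_{H_1},
\]
where the last step uses unitarity of $U$ to move $U$ across the inner product. The scalar $\overline{\overline{J(u)}}\,\overline{J(u)}=|J(u)|^2$ factors out of the sesquilinear form. Similarly, for the denominator,
\[
\langle L_u, L_u\rangle_{H_2}
=|J(u)|^2\,\langle U K_{\psi(u)},\,U K_{\psi(u)}\rangle_{H_2}
=|J(u)|^2\,\langle K_{\psi(u)},\,K_{\psi(u)}\rangle_{H_1}.
\]

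Taking the quotient, the common factor $|J(u)|^2$ cancels, leaving
\[
\Ber_{H_2}(A)(u)
=\frac{\langle U^\ast A U\,K_{\psi(u)},K_{\psi(u)}\rangle_{H_1}}
{\langle K_{\psi(u)},K_{\psi(u)}\rangle_{H_1}}
=\Ber_{H_1}(U^\ast A U)(\psi(u)),
\]
which is the claimed identity. This is a short computation with no real obstacle; the only point requiring a little care is the bookkeeping of the conjugate scalar $\overline{J(u)}$ appearing in $L_u$ so that it combines correctly into $|J(u)|^2$ in both numerator and denominator, and the observation that $U^\ast A U$ is a bounded operator on $H_1$ (so that $\Ber_{H_1}(U^\ast A U)$ is meaningful), which is immediate since $U$ is unitary and $A$ is bounded.
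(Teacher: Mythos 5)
Your proof is correct and follows essentially the same route as the paper: identify $L_u=\overline{J(u)}\,UK_{\psi(u)}$ via Proposition~\ref{prop:pushforward_RK_one_point}, factor $|J(u)|^2$ out of numerator and denominator, and use unitarity of $U$ to transfer the inner products to $H_1$. The only addition is your explicit check that $\|L_u\|_{H_2}\ne 0$, which the paper leaves implicit; this is a harmless (and slightly tidier) refinement.
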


\begin{proof}
As we have seen in Proposition~\ref{prop:pushforward_RK_one_point}, $L_u(v)=\conju{J(u)}(UK_{\psi(u)})(v)$. Therefore,
\begin{align*}
\Ber_{H_2}(A)(u) 
&=\frac{\langle AL_u,L_u
\rangle_{H_2}}{\|L_u\|^2}
= \frac{|J(u)|^2\langle AUK_{\psi(u)}, UK_{\psi(u)}\rangle_{H_2}}%
{|J(u)|^2\|UK_{\psi(u)}\|^2}\\
&= \frac{\langle U^* AUK_{\psi(u)},K_{\psi(u)}\rangle_{H_1}}%
{\|K_{\psi(u)}\|^2} 
= \Ber_{H_1}(U^*AU)(\psi(u)). \qedhere
\end{align*}
\end{proof}

\begin{corollary}\label{cor:Berezin_H1_injective_implies_Berezin_H2_injective}
In the conditions of Proposition~\ref{prop:Berezin_H2_in_terms_of_Berezin_H1},
suppose that $\Ber_{H_1}$ is injective.
Then $\Ber_{H_2}$ is also injective.
Moreover, if $\psi$ is a bijection,
than the injectivity of $\Ber_{H_1}$
is equivalent to the injectivity of $\Ber_{H_2}$.
\end{corollary}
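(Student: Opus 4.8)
The plan is to recast the identity of Proposition~\ref{prop:Berezin_H2_in_terms_of_Berezin_H1} as a factorization of $\Ber_{H_2}$ through $\Ber_{H_1}$ and then read off injectivity from properties of the two outer factors. First I would set $\Phi\colon\cB(H_2)\to\cB(H_1)$, $\Phi(A)\eqdef U^\ast A U$; since $U$ is unitary, $\Phi$ is a linear bijection with inverse $B\mapsto U B U^\ast$. I would also introduce the pullback $\psi^\ast\colon B(X)\to B(Y)$, $(\psi^\ast g)(u)\eqdef g(\psi(u))$. With this notation, Proposition~\ref{prop:Berezin_H2_in_terms_of_Berezin_H1} reads
\[
\Ber_{H_2}=\psi^\ast\circ\Ber_{H_1}\circ\Phi.
\]

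For the first assertion, suppose $\Ber_{H_1}$ is injective and take $A\in\cB(H_2)$ with $\Ber_{H_2}(A)=0$. The displayed identity shows that the function $\Ber_{H_1}(\Phi(A))$ vanishes at every point of $\psi(Y)$. Upgrading this to vanishing on all of $X$, that is, the injectivity of $\psi^\ast$, is exactly the surjectivity of $\psi$, which holds in particular whenever $\psi$ is a bijection (as it is in the applications). Granting this, $\Ber_{H_1}(\Phi(A))=0$ on $X$, so injectivity of $\Ber_{H_1}$ gives $\Phi(A)=0$, and since $\Phi$ is a bijection, $A=0$. Hence $\Ber_{H_2}$ is injective.

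For the ``moreover'' part, assume $\psi$ is a bijection; then $\psi^\ast$ is also a bijection, with inverse $(\psi^{-1})^\ast$. In the factorization, $\Ber_{H_2}$ is obtained from $\Ber_{H_1}$ by pre- and post-composition with the bijections $\Phi$ and $\psi^\ast$, and composition with bijective linear maps neither creates nor destroys a kernel; indeed $\ker\Ber_{H_2}=\Phi^{-1}(\ker\Ber_{H_1})$, so $\ker\Ber_{H_2}=0$ if and only if $\ker\Ber_{H_1}=0$. I would note separately that the implication $\Ber_{H_2}$ injective $\Rightarrow\Ber_{H_1}$ injective already holds with no hypothesis on $\psi$: if $\Ber_{H_1}(B)=0$, then putting $A=\Phi^{-1}(B)=UBU^\ast$ gives $\Ber_{H_2}(A)=\psi^\ast(\Ber_{H_1}(B))=0$, whence $A=0$ and $B=\Phi(A)=0$.

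I expect the only real obstacle to be the single step of passing from vanishing on $\psi(Y)$ to vanishing on all of $X$ in the first assertion; this is precisely where surjectivity of $\psi$ enters, and it is the reason the unconditional equivalence is stated under the bijectivity hypothesis. Everything else is formal manipulation of the factorization, relying only on the identity from Proposition~\ref{prop:Berezin_H2_in_terms_of_Berezin_H1} and on $U$ being unitary.
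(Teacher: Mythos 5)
Your factorization $\Ber_{H_2}=\psi^\ast\circ\Ber_{H_1}\circ\Phi$, where $\Phi(A)\eqdef U^\ast AU$ and $\psi^\ast$ denotes composition with $\psi$, is exactly the content of Proposition~\ref{prop:Berezin_H2_in_terms_of_Berezin_H1}; the paper states the corollary without any proof, and this is evidently the intended argument. All of your formal steps are correct: $\Phi$ is a linear bijection, $\ker\Ber_{H_2}=\Phi^{-1}(\ker\Ber_{H_1})$ once $\psi$ (hence $\psi^\ast$) is bijective, and your closing observation that injectivity of $\Ber_{H_2}$ implies injectivity of $\Ber_{H_1}$ with no hypothesis on $\psi$ is valid.

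The obstacle you flagged, however, deserves to be stated more strongly: the first assertion of the corollary claims ``$\Ber_{H_1}$ injective $\Rightarrow\Ber_{H_2}$ injective'' with no surjectivity hypothesis on $\psi$, and that assertion is in fact false in this generality, so the conditional phrasing of your argument (``granting this'') reflects a defect of the statement, not a gap you failed to close. Concretely, take $n=1$, $H_1=\cA_1^2(\bD,\mu_0)$ the Bergman space on the disk, whose Berezin transform is injective (the classical argument: $(z,w)\mapsto\langle TK_w,K_z\rangle_{H_1}$ is holomorphic in $z$ and antiholomorphic in $w$, so its vanishing on the diagonal forces $T=0$). Let $Y=(-1,1)$, let $\psi\colon Y\to\bD$ be the inclusion, $J\equiv1$, and let $H_2$ be the space of restrictions $f|_Y$ with the inner product transported by $Uf\eqdef f|_Y$; this $U$ is a well-defined unitary operator of the required form because $(-1,1)$ is a uniqueness set for analytic functions, and all hypotheses of Proposition~\ref{prop:Berezin_H2_in_terms_of_Berezin_H1} hold. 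The rank-two operator $Bh\eqdef\langle h,z\rangle_{H_1}\,1-\langle h,1\rangle_{H_1}\,z$ is nonzero (compare the construction in the proof of Proposition~\ref{prop:Berezin_ball}), yet $\langle BK_w,K_w\rangle_{H_1}=\conju{w}-w$, which vanishes at every real $w$; hence $A\eqdef UBU^\ast\ne0$ while $\Ber_{H_2}(A)=\psi^\ast(\Ber_{H_1}(B))=0$. So surjectivity of $\psi$ must be added to the hypotheses of the first assertion (it is automatic in both applications in the paper, where $\psi$ is the involution $\phi_z$ of $\bB_n$ or the Cayley transform $\bH_n\to\bB_n$), and the cleanest repair is precisely the reorganization your proposal suggests: the implication that holds unconditionally is ``$\Ber_{H_2}$ injective $\Rightarrow\Ber_{H_1}$ injective'', while the converse, and hence the equivalence, requires $\psi$ to be surjective.
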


\section{Reproducing kernel on the unit ball}
\label{sec:RK_unit_ball}

In this section we consider the domain $\Om=\bB_n$ with
the standard radial measure $\mu_\al$,
given by~\eqref{eq:mu_alpha}.
Using the weighted mean value property
and appropriate unitary operators,
we compute the RK of $\cA_m^2(\bB_n,\mu_\al)$.

\subsection*{On the unit ball biholomorphisms}

For a fixed $a$ in $\bB_n\setminus\{0\}$,
we denote by $\phi_a$ the function $\bB_n\to\bB_n$, defined by
\begin{equation}\label{eq:phi_def}
\phi_a(z) \eqdef
\frac{a
-\frac{\langle z,a\rangle}{\langle a,a\rangle}\,a
-\sqrt{1-|a|^2}\,\left(z
-\frac{\langle z,a\rangle}%
{\langle a,a\rangle}a\right)}%
{1-\langle z,a \rangle}.
\end{equation}
For $a=0$, $\phi_a(z)\eqdef z$.
It is well
known~\cite[Theorem 2.2.2]{Rudin1980}
that for every $a$ in $\bB_n$,
$\phi_a$ is a biholomorphism of $\bB_n$,
$\phi_a(\phi_a(z))=z$
for every $z$ in $\bB_n$,
$\phi_a(0)=a$, $\phi_a(a)=0$, and
\begin{equation}
\label{eq:phi_property_zw}
1-\langle \phi_a(z),\phi_a(w)\rangle
=\frac{(1-\langle a,a\rangle)(1-\langle z,w\rangle)}%
{(1-\langle z,a\rangle)(1-\langle a,w\rangle)}.
\end{equation}
Here are particular cases of~\eqref{eq:phi_property_zw},
with $w=z$ and $w=0$, respectively:
\begin{align}
\label{eq:phi_property_zz}
1-|\phi_a(z)|^2
&= \frac{(1-|a|^2)(1-|z|^2)}%
{|1-\langle z,a\rangle|^2},
\\[0.5ex]
\label{eq:phi_property_z0}
1-\langle \phi_a(z),a\rangle
&=\frac{1-|a|^2}{1-\langle z,a\rangle}.
\end{align}
The real Jacobian of $\phi_a$ is~\cite[Lemma~1.7]{Zhu2005}
\begin{equation}\label{eq:phi_real_Jacobian}
(J_\bR \phi_a)(z)
= \left(
\frac{1-|a|^2}{|1-\langle z,a \rangle|^2}
\right)^{n+1}.
\end{equation}
We denote by $\rho_{\bB_n}(z,w)$
the expression $|\phi_z(w)|$,
known as the \emph{pseudohyperbolic distance} between $z$ and $w$,
see~\cite[Corollary~1.22]{Zhu2005}
or~\cite{DurenWeir2007}.
Formula~\eqref{eq:phi_property_zz}
provides a simple recipe to compute $\rho_{\bB_n}(z,w)$.

\subsection*{A factor to preserve the polyanalyticity}

\begin{definition}\label{def:p}
Given $a$ in $\bB_n$, we define
$p_{m,a}\colon\bB_n\to\bC$ by
\[
p_{m,a}(z)
\eqdef
\left( \frac{1-\langle a,z \rangle}{1-\langle z,a \rangle} \right)^{m-1}.
\]
\end{definition}

In the one-dimensional case,
the function $p_{m,a}$ was introduced and studied by Pessoa~\cite{Pessoa2014}.
As it is shown in the proof of Lemma~\ref{lem:composition_by_Pessoa},
the main purpose of $p_{m,a}$
is to eliminate the denominators in the multi-powers
of $\overline{\phi_a(z)}$.

\begin{lemma}
For every $a,z$ in $\bB_n$,
\begin{gather}
\label{eq:p_absolute_value}
|p_{m,a}(z)|=1,
\\[0.5ex]
\label{eq:p_inversion}
p_{m,a}(\phi_a(z))p_{m,a}(z)=1.
\end{gather}
\end{lemma}

\begin{proof}
Formula~\eqref{eq:p_absolute_value} follows directly from the definition of $p_{m,a}$.
Identity~\eqref{eq:p_inversion}
is easy to verify using~\eqref{eq:phi_property_z0}.
\end{proof}

\begin{lemma}
\label{lem:composition_by_Pessoa}
Let $a\in\bB_n$ and $f\in\cA_m(\bB_n)$.
Then 
$(f\circ\phi_a)\cdot p_{m,a}
\in\cA_m(\bB_n)$.
\end{lemma}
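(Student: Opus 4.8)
The plan is to reduce everything to the polynomial representation of $m$-analytic functions furnished by Theorem~\ref{thm:Am_characterization}, and then to track how each conjugate monomial transforms under $\phi_a$ once the correcting factor $p_{m,a}$ is switched on. First I would write $f$ in the form~\eqref{eq:polyanalytic_function_as_polynomial}, so that $f=\sum_{|j|<m}h_j\,\overline{z}^j$ with each $h_j\in\cA(\bB_n)$. Since $\phi_a$ is a biholomorphism of $\bB_n$, every $h_j\circ\phi_a$ is again holomorphic, so the coefficients cause no trouble; the whole difficulty is concentrated in the factors $\overline{\phi_a(z)}^{\,j}$, which a priori are not polynomials in $\overline{z}$.

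The key computation is to expose the analytic/anti-analytic structure of the components of $\phi_a$. From formula~\eqref{eq:phi_def}, each component $(\phi_a(z))_\ell$ is a quotient $N_\ell(z)/(1-\langle z,a\rangle)$, where the numerator $N_\ell$ is holomorphic (indeed affine in $z$) and the denominator is holomorphic and nonvanishing on $\bB_n$. Conjugating and using $\overline{\langle z,a\rangle}=\langle a,z\rangle$, I get $\overline{(\phi_a(z))_\ell}=\overline{N_\ell(z)}\big/\bigl(1-\langle a,z\rangle\bigr)$, hence
\[
\overline{\phi_a(z)}^{\,j}
=\frac{\prod_{\ell=1}^n \overline{N_\ell(z)}^{\,j_\ell}}
{\bigl(1-\langle a,z\rangle\bigr)^{|j|}},
\]
where the numerator is a polynomial in $\overline{z}$ of total degree $|j|$ and the single offending denominator is the anti-analytic factor $(1-\langle a,z\rangle)^{|j|}$.

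The main point is the cancellation engineered by $p_{m,a}$. Writing $p_{m,a}(z)=(1-\langle a,z\rangle)^{m-1}\big/(1-\langle z,a\rangle)^{m-1}$ and multiplying, each summand of $(f\circ\phi_a)\cdot p_{m,a}$ becomes
\[
\frac{(h_j\circ\phi_a)(z)}{(1-\langle z,a\rangle)^{m-1}}\;\cdot\;
\prod_{\ell=1}^n \overline{N_\ell(z)}^{\,j_\ell}\;\cdot\;
\bigl(1-\langle a,z\rangle\bigr)^{m-1-|j|}.
\]
Because $|j|<m$, the exponent $m-1-|j|$ is nonnegative, so the anti-analytic denominator disappears entirely: the first factor is holomorphic (nonvanishing denominator), while the remaining product is a polynomial in $\overline{z}$ of total degree exactly $|j|+(m-1-|j|)=m-1<m$. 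Summing over $j$ therefore exhibits $(f\circ\phi_a)\cdot p_{m,a}$ in the form~\eqref{eq:polyanalytic_function_as_polynomial}, i.e. as an element of $\wt{\cA}_m(\bB_n)=\cA_m(\bB_n)$.

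The only step requiring genuine care — the "obstacle", such as it is — is the bookkeeping: verifying that the $N_\ell$ are honestly holomorphic and, more importantly, that the power $m-1$ of $(1-\langle a,z\rangle)$ supplied by $p_{m,a}$ is simultaneously enough to clear the denominator $(1-\langle a,z\rangle)^{|j|}$ for \emph{every} index $j$ appearing in the sum. This is exactly where the homogeneity of the defining condition $|j|<m$ (rather than a coordinatewise multi-order constraint) makes one uniform factor $p_{m,a}$ do the job across all terms at once, and it also explains why the resulting anti-analytic degree lands uniformly at $m-1$.
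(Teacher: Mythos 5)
Your proof is correct and takes essentially the same route as the paper's own proof: write $f$ in the form~\eqref{eq:polyanalytic_function_as_polynomial}, observe that conjugating each component of $\phi_a$ yields $\overline{N_\ell(z)}\big/\bigl(1-\langle a,z\rangle\bigr)$, and use the factor $p_{m,a}$ to clear the anti-analytic denominators uniformly, leaving analytic coefficients multiplied by polynomials in $\conju{z_1},\ldots,\conju{z_n}$ of total degree at most $m-1$. The only cosmetic difference is your claim that the resulting degree is \emph{exactly} $m-1$ (it is only \emph{at most} $m-1$, since cancellations may occur), but membership in $\wt{\cA}_m(\bB_n)=\cA_m(\bB_n)$ requires nothing more.
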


\begin{proof}
Let $f$ be of the form~\eqref{eq:polyanalytic_function_as_polynomial}.
Denote by $N_a(z)$ the numerator of~\eqref{eq:phi_def};
it is a polynomial of degree $1$ in $z_1,\ldots,z_n$.
Then,
\begin{align*}
f(\phi_a(z))p_{m,\al}(z)
&=\left(\frac{1-\langle a,z \rangle}%
{1-\langle z,a \rangle} \right)^{m-1}
\sum_{|j|<m}h_j(\phi_a(z))
\,\frac{\overline{N_a(z)}^{\,j}}%
{(1-\langle a,z\rangle)^{|j|}}
\\
&=\sum_{|j|<m}
\frac{h_j(\phi_a(z))}%
{(1-\langle z,a\rangle)^{m-1}}\,
\overline{N_a(z)}^{\,j}\,
(1-\langle a,z\rangle)^{m-1-|j|}.
\end{align*}
The quotients in the last sum are analytic functions of $z$.
The multi-power $\overline{N_a(z)}^{\,j}$
is a polynomial in $\conju{z_1},\ldots,\conju{z_n}$
of total degree $|j|$,
and the expression
$(1-\langle a,z\rangle)^{m-1-|j|}$
is a polynomial in
$\conju{z_1},\ldots,\conju{z_n}$
of total degree $m-1-|j|$.
Therefore, the whole sum
is a polynomial in $\conju{z_1},\ldots,\conju{z_m}$ of total degree at most $m-1$, with some analytic coefficients.
\end{proof}

\subsection*{A factor to preserve the norm}

\begin{remark}\label{rem:def_power}
In the upcoming formula for $g_{\al,a}$ and in some other formulas of this paper,
we work with (non necesarily integer) powers of complex numbers.
Given $t$ in $\bC\setminus\{0\}$
and $\be$ in $\bC$,
we define $t^\be$ as $\exp(\be\log(t))$,
where
$\log(t)=\log_\bR|t|+\iu\arg(t)$,
$\log_\bR|t|$ is the real logarithm of $|t|$, and $\arg(t)$ is the principal argument of $t$,
belonging to $(-\pi,\pi]$.
\end{remark}

Given $a$ in $\bB_n$, we denote by $g_{\al,a}$ the following function $\bB_n\to\bC$:
\begin{equation}\label{eq:gdef}
g_{\al,a}(z)
\eqdef
\frac{(1-|a|^2)^{\frac{n+1+\al}{2}}}{(1-\langle z,a \rangle)^{n+1+\al}}.
\end{equation}
This function and their properties stated below appear in Vukoti\'{c}~\cite{Vukotic1993}.
See also~\cite[Proposition~1.13]{Zhu2005}
or~\cite[formula~(2.4)]{Englis2007}.
By~\eqref{eq:phi_property_z0},
\begin{equation}
\label{eq:g_product}
g_{\al,a}(\phi_a(z))g_{\al,a}(z)=1.
\end{equation}
By~\eqref{eq:g_product},
\eqref{eq:phi_property_zz},
and~\eqref{eq:phi_real_Jacobian},
\begin{equation}
\label{eq:g_main_property}
|g_{\al,a}(\phi_a(w))|^2 (J_\bR \phi_a)(w)
(1-|\phi_a(w)|^2)^\al
=(1-|w|^2)^\al.
\end{equation}
Using \eqref{eq:g_main_property}
and the change of variables $w=\phi_a(z)$,
one easily shows that for every $f$ in $f\in L^2(\bB_n,\mu_\al)$,
\begin{equation}\label{eq:g_isometry}
\|(f\circ\phi_a)\cdot g_{\al,a}\|_{L^2(\bB_n,\mu_\al)}
=\|f\|_{L^2(\bB_n,\mu_\al)}.
\end{equation}

\subsection*{A weighted shift operator
preserving $\cA_m^2(\bB_n,\mu_\al)$}

\begin{definition}\label{def:U}
Given $a$ in $\bB_n$, we define
$U_a \colon \cA_m^2(\bB_n,\mu_\al) \to \cA_m^2(\bB_n,\mu_\al) $ by
\[
(U_a f)(z) \eqdef 
f(\phi_a(z)) p_{m,a}(z) g_{\al,a}(z).
\]
\end{definition}

\begin{proposition}\label{prop:Ua_is_unitary}
Let $a\in\bB^n$.
Then $U_a$ is a unitary operator in $\cA_m^2(\bB_n,\mu_\al)$,
and $U_a^2=I$.
\end{proposition}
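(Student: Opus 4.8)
The plan is to verify the three defining properties of a unitary involution separately, assembling the pieces that have already been laid out in the preceding lemmas. First I would check that $U_a$ maps $\cA_m^2(\bB_n,\mu_\al)$ into itself. For $f\in\cA_m^2(\bB_n,\mu_\al)$, the function $(f\circ\phi_a)\cdot p_{m,a}$ belongs to $\cA_m(\bB_n)$ by Lemma~\ref{lem:composition_by_Pessoa}. Multiplying by the analytic factor $g_{\al,a}$ (holomorphic on $\bB_n$ since $1-\langle z,a\rangle$ never vanishes there) preserves membership in $\cA_m(\bB_n)$, because multiplying a homogeneously polyanalytic function by an analytic function keeps the representation~\eqref{eq:polyanalytic_function_as_polynomial} intact (formally, $g_{\al,a}\in\wt{\cA}_1(\bB_n)$, so this is Lemma~\ref{lem:f_in_A_p_g_in_A_q_implies_fg_in_A_p+q-1} with $q=1$). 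Square-integrability then follows from the norm computation below.

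Next I would establish the isometry property. Since $|p_{m,a}(z)|=1$ by~\eqref{eq:p_absolute_value}, the pointwise modulus of $(U_a f)(z)$ equals $|f(\phi_a(z))|\,|g_{\al,a}(z)|$, so
\[
\|U_a f\|_{L^2(\bB_n,\mu_\al)}
=\|(f\circ\phi_a)\cdot g_{\al,a}\|_{L^2(\bB_n,\mu_\al)}
=\|f\|_{L^2(\bB_n,\mu_\al)},
\]
where the last equality is precisely~\eqref{eq:g_isometry}. In particular $U_a f$ is square-integrable whenever $f$ is, completing the check that $U_a$ is well-defined, and $U_a$ is a linear isometry.

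Finally I would prove the involution identity $U_a^2=I$, which simultaneously delivers surjectivity and hence unitarity. Using $\phi_a\circ\phi_a=\mathrm{id}$ (recalled just after~\eqref{eq:phi_def}), a direct substitution gives
\[
(U_a^2 f)(z)
= f(\phi_a(\phi_a(z)))\,p_{m,a}(\phi_a(z))\,g_{\al,a}(\phi_a(z))\,p_{m,a}(z)\,g_{\al,a}(z)
= f(z)\,\bigl(p_{m,a}(\phi_a(z))p_{m,a}(z)\bigr)\bigl(g_{\al,a}(\phi_a(z))g_{\al,a}(z)\bigr).
\]
By~\eqref{eq:p_inversion} the first parenthesized product equals $1$, and by~\eqref{eq:g_product} the second equals $1$, so $(U_a^2 f)(z)=f(z)$. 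Thus $U_a^2=I$; combined with the isometry property, $U_a$ is a surjective isometry, hence unitary, and it is its own inverse.

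I do not expect any genuine obstacle here, since every ingredient has been prepared: the hard analytic content (preservation of polyanalyticity, the change-of-variables norm identity, and the two multiplicative inversion identities) is already isolated in Lemma~\ref{lem:composition_by_Pessoa}, formula~\eqref{eq:g_isometry}, and identities~\eqref{eq:p_inversion} and~\eqref{eq:g_product}. The only point demanding a little care is the branch-of-power bookkeeping in $g_{\al,a}(\phi_a(z))g_{\al,a}(z)=1$ from~\eqref{eq:g_product}, which relies on the principal-branch convention fixed in Remark~\ref{rem:def_power}; I would simply cite~\eqref{eq:g_product} rather than re-derive it.
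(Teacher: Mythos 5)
Your proof is correct and takes essentially the same route as the paper's: Lemma~\ref{lem:composition_by_Pessoa} for preservation of polyanalyticity, \eqref{eq:p_absolute_value} combined with \eqref{eq:g_isometry} for the isometry, and \eqref{eq:p_inversion} together with \eqref{eq:g_product} for the involution $U_a^2=I$, which gives surjectivity and hence unitarity. The only difference is that you make explicit a step the paper leaves implicit, namely that multiplying $(f\circ\phi_a)\,p_{m,a}$ by the analytic factor $g_{\al,a}$ (nonvanishing denominator, principal branch) stays in $\cA_m(\bB_n)$ via Lemma~\ref{lem:f_in_A_p_g_in_A_q_implies_fg_in_A_p+q-1}, which is a harmless and in fact welcome refinement.
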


\begin{proof}
Given $f$ in $\cA_m^2(\bB_n,\mu_\al)$,
Lemma~\ref{lem:composition_by_Pessoa}
assures that $U_a f\in\cA_m(\bB_n)$.
Formula~\eqref{eq:g_isometry},
combined with~\eqref{eq:p_absolute_value},
implies that $U_a$ is an isometry.
Finally, \eqref{eq:p_inversion}
and~\eqref{eq:g_product}
yield the involutive property $U_a^2=I$.
\end{proof}

\subsection*{Computation of the RK on the unit ball}

Recall that $R_m^{(\al,\be)}$ is defined by~\eqref{eq:Rpol_def}
and $\rho_{\bB_n}(z,w)$ denotes $|\phi_z(w)|$.

\begin{theorem}\label{thm:RK_ball}
Let $n,m\in\bN$ and $\al>-1$.
Then for every $z$ in $\bB_n$,
the following function $K_z$
is the reproducing kernel of $\cA_m^2(\bB_n,\mu_\al)$
at the point $z$:
\begin{equation}
\label{eq:RK_ball}
K_z(w)
=\frac{(1-\langle z,w\rangle)^{m-1}}%
{(1-\langle w,z\rangle)^{n+m+\al}}\,
R_{m-1}^{(\al,n-1)}(\rho_{\bB_n}(z,w)^2).
\end{equation}
\end{theorem}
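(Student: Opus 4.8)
The plan is to compute the kernel in two stages: first pin down the reproducing kernel at the origin using the weighted mean value property, and then transport it to an arbitrary point $z$ by the unitary involution $U_z$ together with the pushforward recipe of Section~\ref{sec:pushforward_RK}.

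\medskip\noindent\textbf{Reproducing kernel at the origin.}
First I would verify that
\[
K_0(w)\eqdef R_{m-1}^{(\al,n-1)}(|w|^2)
\]
is the reproducing kernel of $\cA_m^2(\bB_n,\mu_\al)$ at $0$. Since $R_{m-1}^{(\al,n-1)}$ has degree $m-1$ and $|w|^2=\sum_{j=1}^n w_j\conju{w_j}$, the function $K_0$ is a polynomial of degree $<m$ in $\conju{w_1},\ldots,\conju{w_n}$ with polynomial coefficients, so $K_0\in\wt{\cA}_m(\bB_n)=\cA_m(\bB_n)$ by Theorem~\ref{thm:Am_characterization}; being bounded, it lies in $\cA_m^2(\bB_n,\mu_\al)$ because $\mu_\al(\bB_n)=1$. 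As $R_{m-1}^{(\al,n-1)}$ has real coefficients, $\conju{K_0}=K_0$, and the weighted mean value property (Theorem~\ref{thm:weighted_mean_value_property}, applicable since $L^2(\bB_n,\mu_\al)\subseteq L^1(\bB_n,\mu_\al)$) yields, for every $f$ in $\cA_m^2(\bB_n,\mu_\al)$,
\[
\langle f,K_0\rangle
=\int_{\bB_n}f(w)\,R_{m-1}^{(\al,n-1)}(|w|^2)\,\dif\mu_\al(w)
=f(0).
\]

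\medskip\noindent\textbf{Transport to an arbitrary point.}
Next I would fix $z\in\bB_n$ and set $\psi\eqdef\phi_z$ and $J\eqdef p_{m,z}\,g_{\al,z}$, so that the operator $U_z$ of Definition~\ref{def:U} takes the form $(U_zf)(w)=J(w)f(\psi(w))$ required by Proposition~\ref{prop:pushforward_RK_one_point}; it is unitary by Proposition~\ref{prop:Ua_is_unitary}. Since $\phi_z(z)=0$, applying the one-point pushforward at $u=z$ gives
\[
K_z(w)=\overline{J(z)}\,J(w)\,K_0(\phi_z(w))
=\overline{J(z)}\,J(w)\,R_{m-1}^{(\al,n-1)}\!\left(|\phi_z(w)|^2\right).
\]
Because $\rho_{\bB_n}(z,w)=|\phi_z(w)|$, the Jacobi factor already matches~\eqref{eq:RK_ball}; it remains only to simplify $\overline{J(z)}\,J(w)$.

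\medskip\noindent\textbf{The scalar factor and branch bookkeeping.}
Here $p_{m,z}(z)=1$ and $g_{\al,z}(z)=(1-|z|^2)^{-\frac{n+1+\al}{2}}$ is real and positive, so $\overline{J(z)}=(1-|z|^2)^{-\frac{n+1+\al}{2}}$; multiplying by $J(w)=\left(\frac{1-\langle z,w\rangle}{1-\langle w,z\rangle}\right)^{m-1}\frac{(1-|z|^2)^{\frac{n+1+\al}{2}}}{(1-\langle w,z\rangle)^{n+1+\al}}$ collapses the $(1-|z|^2)$ powers and adds the exponents of $1-\langle w,z\rangle$, leaving
\[
\overline{J(z)}\,J(w)=\frac{(1-\langle z,w\rangle)^{m-1}}{(1-\langle w,z\rangle)^{n+m+\al}},
\]
which is exactly the prefactor in~\eqref{eq:RK_ball}. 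The one point deserving care is the branch of the non-integer powers (Remark~\ref{rem:def_power}): since $1-|z|^2>0$ and $\Re(1-\langle w,z\rangle)>0$ for $z,w\in\bB_n$, every base stays off the negative real axis, a single principal logarithm serves throughout, and the identity $t^at^b=t^{a+b}$ holds for that fixed logarithm, legitimizing both the cancellation of the $(1-|z|^2)$ factors and the addition of exponents. This bookkeeping will be the only genuinely delicate step; everything else is a direct substitution into the tools already established.
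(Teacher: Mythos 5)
Your proposal is correct and follows essentially the same route as the paper's own proof: establish $K_0(w)=R_{m-1}^{(\al,n-1)}(|w|^2)$ via Theorem~\ref{thm:weighted_mean_value_property}, then transport it to an arbitrary $z$ with Proposition~\ref{prop:pushforward_RK_one_point} applied to the unitary $U_z$ with $J_z=p_{m,z}\,g_{\al,z}$, and simplify the scalar factor. In fact you supply several details the paper leaves implicit (that $K_0$ belongs to $\cA_m^2(\bB_n,\mu_\al)$, that $K_0$ is real-valued so the conjugate in the inner product is harmless, that $L^2\subseteq L^1$ for the probability measure $\mu_\al$, and the branch bookkeeping for non-integer powers), all of which are handled correctly.
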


\begin{proof}
For $z=0$, the function defined by the right-hand side of~\eqref{eq:RK_ball} simplifies to
\[
K_0(w)=R_{m-1}^{(\al,n-1)}(|w|^2).
\]
Theorem~\ref{thm:weighted_mean_value_property} means that $K_0$ is indeed the RK at the point $0$.
Now, for $z$ in $\bB_n$,
we apply Proposition~\ref{prop:pushforward_RK_one_point}
with $H_1=H_2=\cA_m^2(\bB_n)$,
$\phi_z$ instead of $\psi$,
and $J_z\eqdef p_{m,z}g_{\al,z}$.
Since $\phi_z(z)=0$, we obtain
\begin{equation}
\label{eq:reproducing_property_at_any_z}
K_z(w)
= \conju{J_z(z)} J_z(w) K_0(\phi_z(w)).
\end{equation}
It is easy to see that $J_z(z)=(1-|z|^2)^{-\frac{n+1+\al}{2}}$.
So, after some simplifications,
we arrive at~\eqref{eq:RK_ball}:
\begin{align*}
K_z(w) &= \frac{1}{(1-|z|^2)^{\frac{n+1+\al}{2}}} 
\left( \frac{1-\langle z,w \rangle}{1-\langle w,z \rangle} \right)^{m-1} 
\frac{(1-|z|^2)^{\frac{n+1+\al}{2}}}{(1-\langle w,z \rangle)^{n+1+\al}}
R_{m-1}^{(\al,n-1)} (|\phi_z(w)|^2) \\
&= \frac{(1-\langle z,w\rangle)^{m-1}}%
{(1-\langle w,z\rangle)^{n+m+\al}}\;
R_{m-1}^{(\al,n-1)}(\rho_{\bB_n}(z,w)^2).
\qedhere
\end{align*}
\end{proof}

Formula~\eqref{eq:RK_ball} is a natural generalization of previous results:
 \cite{Koshelev1977,Pessoa2013} for $n=1$ and $\al=0$, \cite{HachadiYoussfi2019} for $n=1$ and $\al>-1$, and \cite[Theorem 2.7]{Zhu2005} for $m=1$.

\begin{corollary}
\label{cor:RK_ball_norm}
Let $n,m\in\bN$ and $\al>-1$.
Then for every $z$ in $\bB_n$,
\begin{equation}\label{eq:Kz_ball_norm}
\|K_z\|_{\cA_m^2(\bB_n,\mu_\al)}^2
=K_z(z)
=\binom{n+m-1}{n}\,%
\frac{\Be(\al+1,n)}%
{\Be(\al+m,n)}\,
\frac{1}{(1-|z|^2)^{n+\al+1}}.
\end{equation}
\end{corollary}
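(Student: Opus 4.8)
The plan is to exploit the general Hilbert-space identity $\|K_z\|_{\cA_m^2(\bB_n,\mu_\al)}^2 = \langle K_z,K_z\rangle = K_z(z)$, obtained by feeding $f = K_z$ into the reproducing property of $K_z$; this is legitimate because $\cA_m^2(\bB_n,\mu_\al)$ is a RKHS (Proposition~\ref{prop:Am2_is_RKHS}) with kernel given by Theorem~\ref{thm:RK_ball}. The first equality in \eqref{eq:Kz_ball_norm} is then immediate, and the whole problem reduces to evaluating the explicit expression \eqref{eq:RK_ball} on the diagonal $w=z$.

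First I would substitute $w=z$ into \eqref{eq:RK_ball}. Since $\langle z,z\rangle = |z|^2$, the rational prefactor collapses to $(1-|z|^2)^{m-1}/(1-|z|^2)^{n+m+\al} = (1-|z|^2)^{-(n+\al+1)}$, which already accounts for the $z$-dependence claimed in \eqref{eq:Kz_ball_norm}. Because $\phi_z(z)=0$, the pseudohyperbolic distance satisfies $\rho_{\bB_n}(z,z) = |\phi_z(z)| = 0$, so the Jacobi factor degenerates to the single constant $R_{m-1}^{(\al,n-1)}(0)$. Thus $K_z(z) = (1-|z|^2)^{-(n+\al+1)}\,R_{m-1}^{(\al,n-1)}(0)$, and it remains only to identify this constant.

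The remaining task is the purely computational evaluation of $R_{m-1}^{(\al,n-1)}(0)$, which I would read off from the explicit expansion \eqref{eq:R_explicit}: at $t=0$ only the $s=0$ summand survives, so with the substitutions $m\mapsto m-1$ and $\be\mapsto n-1$ one obtains
\[
R_{m-1}^{(\al,n-1)}(0)
= \frac{\Ga(\al+1)\,\Ga(n+m)\,\Ga(\al+n+m)}
{\Ga(\al+n+1)\,\Ga(\al+m)\,(m-1)!\,\Ga(n+1)}.
\]
The main (and only mildly tedious) obstacle is then the Gamma-function bookkeeping needed to recognize this ratio as $\binom{n+m-1}{n}\,\Be(\al+1,n)/\Be(\al+m,n)$. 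I would write $\binom{n+m-1}{n} = \Ga(n+m)/(\Ga(n+1)\,(m-1)!)$ and expand both Beta functions through $\Be(x,y)=\Ga(x)\Ga(y)/\Ga(x+y)$, so that the two factors $\Ga(n)$ cancel and $\Be(\al+1,n)/\Be(\al+m,n)$ collapses to $\Ga(\al+1)\Ga(\al+n+m)/(\Ga(\al+n+1)\Ga(\al+m))$. Multiplying this by the binomial coefficient reproduces the displayed ratio term by term, which completes the proof.
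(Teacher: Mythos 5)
Your proposal is correct and is essentially the proof the paper intends: the corollary is stated without proof precisely because it follows from the standard RKHS identity $\|K_z\|^2=\langle K_z,K_z\rangle=K_z(z)$ together with the diagonal evaluation of \eqref{eq:RK_ball}, where $\rho_{\bB_n}(z,z)=0$ reduces everything to the constant $R_{m-1}^{(\al,n-1)}(0)$. Your Gamma-function bookkeeping, identifying $R_{m-1}^{(\al,n-1)}(0)$ with $\binom{n+m-1}{n}\Be(\al+1,n)/\Be(\al+m,n)$ via \eqref{eq:R_explicit}, is accurate.
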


\begin{remark}
We get other formulas, equivalent to~\eqref{eq:RK_ball}, using~\eqref{eq:phi_property_zz} and~\eqref{eq:Jacobi_explicit}:
\begin{align}
\label{eq:RK_ball_Jacobi}
K_z(w)
&= \frac{(1-\langle z,w\rangle)^{m-1}}%
{(1-\langle w,z\rangle)^{n+m+\al}}\,%
\frac{(-1)^{m-1}\Be(\al+1,n)}{\Be(\al+m,n)}
P_{m-1}^{(\al,n)}(2\rho_{\bB_n}(z,w)^2-1)
\\[1.5ex]
\label{eq:RK_ball_Jacobi_2}
&=\frac{(1-\langle z,w\rangle)^{m-1}}%
{(1-\langle w,z\rangle)^{n+m+\al}}
\frac{(-1)^{m-1}\Be(\al+1,n)}{\Be(\al+m,n)}\;%
P_{m-1}^{(\al,n)}
\left(1-\frac{2(1-|z|^2)(1-|w|^2)}{|1-\langle w,z\rangle|^2}\right)
\\
\nonumber
&=
\frac{(1-\langle z,w\rangle)^{m-1}}%
{(1-\langle w,z\rangle)^{n+m+\al}}\,%
\frac{(-1)^{m-1}\,\Gamma(\al+1)}{\Gamma(\al+n+1)\,(m-1)!} \times
\\
\label{eq:RK_ball_explicit}
&\qquad \times\sum_{s=0}^{m-1}%
(-1)^s \binom{m-1}{s} \frac{\Gamma(\al+m+n+s)}{\Gamma(\al+s+1)}
\left(\frac{(1-|z|^2)(1-|w|^2)}{|1-\langle w,z\rangle|^2}\right)^s.
\end{align}
\end{remark}

\begin{remark}\label{rem:rotations_in_the_ball}
If $M$ is a unitary $n$ by $n$ matrix,
then the RK computed in Theorem~\ref{thm:RK_ball} is invariant under the simultaneous action of $M$ in both arguments:
\[
K_{Mz}(Mw)=K_z(w)\qquad(z,w\in\bB^n).
\]
Therefore, by~\cite[Proposition~4.1]{MaximenkoTelleria2020}, the space $\cA_m^2(\bB^n,\mu_\al)$
is invariant under the action of the rotation operator
\[
(R_M f)(z)\eqdef f(M^{-1}z).
\]
This follows also directly from~Proposition~\ref{prop:Am_is_invariant_under_linear_changes_of_variables}.
Notice that the unitary matrices include permutation matrices, diagonal matrices with unimodular complex entries, and real rotations in any two coordinates.
\end{remark}

\begin{remark}\label{rem:biholomorphic_changes_of_variables_in_the_ball}
Generalizing ideas of this section, it is possible to construct a unitary weighted shift operator $U_\phi$ acting in $\cA_m^2(\bB_n,\mu_\al)$,
for every biholomorphism $\phi$ of $\bB_n$.
\end{remark}

The next result was published by Engli\v{s}~ \cite[Section 2]{Englis2007}
for RKHS of harmonic functions.
We reformulate it for our situation and recall the idea of the proof.

\begin{proposition}\label{prop:Berezin_ball}
Let $H=\cA_m^2(\bB_n,\mu_\al)$, with $n\ge 1$ and $m\ge 2$. Then $\Ber_H$ is not injective.
\end{proposition}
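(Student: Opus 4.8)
The plan is to produce a single nonzero operator $A\in\cB(H)$ with $\Ber_H(A)\equiv0$, where $H=\cA_m^2(\bB_n,\mu_\al)$; by the definition of the Berezin transform this immediately shows that $\Ber_H$ is not injective. The operator I would take is $A=P-\lambda I$, where $P$ is the orthogonal projection of $H$ onto its subspace $\cA_1^2(\bB_n,\mu_\al)$ of holomorphic functions and $\lambda$ is a constant computed below; this is the polyanalytic analogue of Engli\v{s}'s harmonic-space argument, with the holomorphic part playing the role of the ``nice'' subspace. Since holomorphic functions are $m$-analytic and both spaces carry the inner product inherited from $L^2(\bB_n,\mu_\al)$, the space $\cA_1^2(\bB_n,\mu_\al)$ is a genuine closed subspace of $H$; it is nonzero (it contains the constants) and, because $m\ge2$, proper (for instance $\conju{z_1}$ lies in $H$ but is not holomorphic). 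Hence $P$ is a nontrivial projection.

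First I would show that $\Ber_H(P)$ is constant. Write $K_z$ for the reproducing kernel of $H$ (Theorem~\ref{thm:RK_ball}) and $k_z$ for the reproducing kernel of $\cA_1^2(\bB_n,\mu_\al)$. I claim $PK_z=k_z$: for every $f\in\cA_1^2(\bB_n,\mu_\al)$ one has $\langle f,PK_z\rangle=\langle Pf,K_z\rangle=\langle f,K_z\rangle=f(z)=\langle f,k_z\rangle$, the two reproducing identities using the same inner product; as $PK_z$ and $k_z$ both lie in $\cA_1^2(\bB_n,\mu_\al)$, they coincide. Since $P$ is a self-adjoint idempotent, this gives $\langle PK_z,K_z\rangle=\|PK_z\|^2=\|k_z\|^2=k_z(z)$. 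By Corollary~\ref{cor:RK_ball_norm}, both $K_z(z)$ and $k_z(z)$ (the latter being the $m=1$ case, whose combinatorial factor equals $1$) are constant multiples of $(1-|z|^2)^{-(n+\al+1)}$, so
\[
\Ber_H(P)(z)=\frac{\langle PK_z,K_z\rangle}{\langle K_z,K_z\rangle}=\frac{k_z(z)}{K_z(z)}=\binom{n+m-1}{n}^{-1}\frac{\Be(\al+m,n)}{\Be(\al+1,n)}=:\lambda,
\]
independent of $z$.

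It then follows that $\Ber_H(A)=\Ber_H(P)-\lambda\,\Ber_H(I)\equiv0$, because $\Ber_H(I)\equiv1$. Finally $A\ne0$: one checks $0<\lambda<1$ (here $\binom{n+m-1}{n}>1$ and $\Be(\al+m,n)<\Be(\al+1,n)$ for $m\ge2$), whereas the projection $P$ has only the eigenvalues $0$ and $1$, so $P\ne\lambda I$. This completes the argument. I do not expect a serious obstacle: the whole proof rests on the single structural fact, supplied by Corollary~\ref{cor:RK_ball_norm}, that the on-diagonal reproducing kernel of the polyanalytic space is just a scalar multiple of the holomorphic Bergman kernel. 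The only points requiring care are the identity $PK_z=k_z$ (which hinges on the two spaces sharing the same inner product) and the verification that $\lambda\ne0,1$, so that $A$ is genuinely nonzero.
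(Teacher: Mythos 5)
Your proof is correct, but it is a genuinely different argument from the one in the paper. The paper's proof (following Engli\v{s}) is a two-line rank-two construction: with $f(z)=z_1$ and $g(z)=\conju{z_1}$, both in $H$ and linearly independent, the operator $Sh=\langle h,f\rangle_H f-\langle h,g\rangle_H g$ is nonzero, yet $\Ber_H(S)(z)=\bigl(|f(z)|^2-|g(z)|^2\bigr)/\|K_z\|^2\equiv 0$ since $|z_1|^2=|\conju{z_1}|^2$; note this needs no information about the kernel beyond $\|K_z\|\ne 0$. Your argument instead takes $A=P-\lambda I$ with $P$ the orthogonal projection onto the holomorphic subspace $\cA_1^2(\bB_n,\mu_\al)$, and rests on the structural fact from Corollary~\ref{cor:RK_ball_norm} that the diagonal $K_z(z)$ of the polyanalytic kernel is a constant multiple of the holomorphic diagonal $k_z(z)$, so that $\Ber_H(P)\equiv\lambda$ is constant. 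All the steps check out: $PK_z=k_z$ is the standard identity for projections in a common $L^2$ inner product, $\langle PK_z,K_z\rangle=\|k_z\|^2=k_z(z)$, the constancy of $\lambda=\binom{n+m-1}{n}^{-1}\Be(\al+m,n)/\Be(\al+1,n)$ follows from the corollary (valid also at $m=1$, where $R_0\equiv 1$), and $A\ne 0$ because a nontrivial proper projection is never a scalar multiple of the identity (your explicit check $0<\lambda<1$ is more than enough). The trade-off: the paper's route is more elementary and self-contained, while yours requires the full kernel computation of Section~5 but produces a more structured element of $\ker\Ber_H$ and, as a by-product, the observation that the Berezin transform of the holomorphic projection is constant — a fact of independent interest. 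One small quibble: what you describe is not really "the polyanalytic analogue of Engli\v{s}'s argument"; Engli\v{s}'s argument is precisely the rank-two one the paper uses.
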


\begin{proof}
The functions $f(z)=z_1$ and $g(z)=\conju{z_1}$ are linearly independent elements of $H$.
Therefore,
the operator
$Sh
=\langle h,f\rangle_H f
-\langle h,g\rangle_H g$ is not zero, but the Berezin transform maps it into the zero function.
\end{proof}

\clearpage
\section{Reproducing kernel on the Siegel domain}
\label{sec:RK_Siegel}

Let $n,m\in\bN$ and $\al>-1$.
In this section we compute the RK
of the space $\cA_m^2(\bH_n,\nu_\al)$,
where $\bH_n$ is the standard Siegel domain (which can be considered as an unbounded realization of the unit ball) and $\nu_\al$ is a usual weighted measure on $\bH_n$:
\begin{gather}
\label{Siegel}
\bH_n\eqdef \{\xi=(\xi',\xi_n)\in\bC^{n-1}\times\bC\colon
\Im(\xi_n)-|\xi'|^2>0\},
\\[0.5ex]
\label{eq:measure_on_Siegel}
\dif\nu_\al(\xi)
\eqdef\frac{c_\al}{4}
(\Im(\xi_n)-|\xi'|^2)^\al\,\dif{}\mu(\xi).
\end{gather}
For this purpose,
we will construct a unitary operator $V\colon\cA_m^2(\bB_n,\mu_\al)\to\cA_m^2(\bH_n,\nu_\al)$,
using some recipes from~\cite[Section~2]{QuirogaVasilevski2007} and an analog of the Pessoa factor which helps to preserve the polyanalyticity.

\subsection*{Cayley transform}

Following~\cite[Section~2]{QuirogaVasilevski2007}, we employ the biholomorphism $\omega\colon\bB_n\to\bH_n$ defined by
\[
\omega(z)
\eqdef \left(\iu\,\frac{z_1}{1+z_n},\ldots,
\iu\,\frac{z_{n-1}}{1+z_n},
\iu\,\frac{1-z_n}{1+z_n}\right).
\]
Its inverse $\psi\colon\bH_n\to\bB_n$ is given by
\[
\psi(\xi)\eqdef
\left(-\frac{2\iu \xi_1}{1-\iu \xi_n},\ldots,
-\frac{2\iu \xi_{n-1}}{1-\iu \xi_n},
\frac{1+\iu \xi_n}{1-\iu \xi_n}\right).
\]
By a direct computation,
\begin{equation}\label{eq:psi_inner_prod}
1-\langle\psi(\xi),\psi(\eta)\rangle
= 4\,\frac{\frac{\xi_n-\conju{\eta_n}}{2\iu}-\langle \xi',\eta'\rangle}%
{(1-i\xi_n)(1+i\conju{\eta_n})}.
\end{equation}
In particular,
\begin{equation}\label{eq:psi_abs}
1 - |\psi(\xi)|^2
= 4\,\frac{\Im(\xi_n) -|\xi'|^2}{|1-i\xi_n|^2}.
\end{equation}
The complex Jacobian matrices of $\psi$ and $\omega$ are triangular, and their determinants are easy to compute:
\begin{equation}
\label{eq:psi_complex_Jacobian}
(J_\bC\omega)(z)
=-\frac{2\iu^n}%
{(1+z_n)^{n+1}},
\qquad
(J_\bC \psi)(\xi)
=-\frac{(-2\iu)^n}%
{(1-\iu \xi_n)^{n+1}}.
\end{equation}
Therefore, the real Jacobians of $\omega$ and $\psi$ are
\begin{equation}
\label{eq:psi_real_Jacobian}
(J_\bR\omega)(z)
=
\frac{4}{|1+z_n|^{2(n+1)}},\qquad
(J_\bR \psi)(\xi)
=\frac{4^n}{|1-\iu\xi_n|^{2(n+1)}}.
\end{equation}

\subsection*{Pseudohyperbolic distance on the Siegel domain}

\begin{definition}
\label{def:pseudohyperbolic_distance_Siegel}
Define a distance on $\bH_n$ by
\begin{equation}\label{eq:pseudohyperbolic_distance_Siegel}
\rho_{\bH_n}(\xi,\eta)\eqdef\rho_{\bB_n}(\psi(\xi),\psi(\eta)).
\end{equation}
\end{definition}

The following proposition provides an efficient formula to compute $\rho_{\bH_n}(\xi,\eta)$.

\begin{proposition}
For every $\xi,\eta$ in $\bH_n$,
\begin{equation}
\label{eq:rho_Siegel_efficient}
1-\rho_{\bH_n}(\xi,\eta)^2
=\frac{(\Im(\xi_n)-|\xi'|^2)%
(\Im(\eta_n)-|\eta'|^2)}%
{\left|\frac{\xi_n-\conju{\eta_n}}{2\iu}%
-\langle \xi',\eta'\rangle\right|^2}.
\end{equation}
\end{proposition}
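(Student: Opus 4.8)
The plan is to compute the right-hand side directly from the definition $\rho_{\bH_n}(\xi,\eta)=\rho_{\bB_n}(\psi(\xi),\psi(\eta))=|\phi_{\psi(\xi)}(\psi(\eta))|$, reducing everything to a known identity in the ball. By~\eqref{eq:phi_property_zz} applied with $a=\psi(\xi)$ and $z=\psi(\eta)$, we have
\[
1-\rho_{\bB_n}(\psi(\xi),\psi(\eta))^2
=\frac{(1-|\psi(\xi)|^2)(1-|\psi(\eta)|^2)}
{|1-\langle\psi(\eta),\psi(\xi)\rangle|^2}.
\]
So the whole statement follows once I translate each of the three factors on the right into Siegel-domain coordinates using the formulas already established for the Cayley transform.

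First I would substitute~\eqref{eq:psi_abs} for the two numerator factors $1-|\psi(\xi)|^2$ and $1-|\psi(\eta)|^2$, producing
\[
4\,\frac{\Im(\xi_n)-|\xi'|^2}{|1-\iu\xi_n|^2}
\quad\text{and}\quad
4\,\frac{\Im(\eta_n)-|\eta'|^2}{|1-\iu\eta_n|^2}.
\]
For the denominator I would use~\eqref{eq:psi_inner_prod}, taking the version with arguments $\eta,\xi$ in that order so that $1-\langle\psi(\eta),\psi(\xi)\rangle$ equals
\[
4\,\frac{\frac{\eta_n-\conju{\xi_n}}{2\iu}-\langle\eta',\xi'\rangle}
{(1-\iu\eta_n)(1+\iu\conju{\xi_n})},
\]
and then take $|\cdot|^2$. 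The key observation is that $\bigl|\frac{\eta_n-\conju{\xi_n}}{2\iu}-\langle\eta',\xi'\rangle\bigr|=\bigl|\frac{\xi_n-\conju{\eta_n}}{2\iu}-\langle\xi',\eta'\rangle\bigr|$, since the two quantities are complex conjugates of each other; this is precisely the modulus appearing in~\eqref{eq:rho_Siegel_efficient}.

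With these substitutions in place, the factor of $4$ in each numerator term gives $16$, while the denominator contributes $16$ from the $|4|^2$ in $|1-\langle\psi(\eta),\psi(\xi)\rangle|^2$, so the powers of $4$ cancel. The remaining modulus factors $|1-\iu\xi_n|^2$ and $|1-\iu\eta_n|^2$ from the numerators match exactly the factors $|1-\iu\eta_n|^2$ and $|1+\iu\conju{\xi_n}|^2=|1-\iu\xi_n|^2$ coming from squaring the denominator of~\eqref{eq:psi_inner_prod}, so they too cancel. What survives is exactly~\eqref{eq:rho_Siegel_efficient}. The computation is entirely routine bookkeeping; \textbf{the only point requiring a moment's care} is matching the conjugate pair in the denominator modulus and verifying that all the $|1\pm\iu\xi_n|^2$ and $|1\pm\iu\eta_n|^2$ factors cancel cleanly, which follows from $|1+\iu\conju{\xi_n}|=|\conju{1-\iu\xi_n}|=|1-\iu\xi_n|$.
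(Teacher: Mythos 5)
Your proposal is correct and follows essentially the same route as the paper: both apply~\eqref{eq:phi_property_zz} to the points $\psi(\xi),\psi(\eta)$ and then substitute~\eqref{eq:psi_inner_prod} and~\eqref{eq:psi_abs}, with all powers of $4$ and the factors $|1\pm\iu\xi_n|^2$, $|1\pm\iu\eta_n|^2$ cancelling. The only cosmetic difference is that you keep the inner product in the order $\langle\psi(\eta),\psi(\xi)\rangle$ and invoke conjugate symmetry explicitly, whereas the paper writes the denominator directly as $|1-\langle\psi(\xi),\psi(\eta)\rangle|^2$; the two are equal for the reason you state.
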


\begin{proof}
Substitute $\psi(\xi)$ and $\psi(\eta)$ instead of $z$ and $w$ in~\eqref{eq:phi_property_zz}:
\[
1-\rho_{\bH_n}(\xi,\eta)^2
=1-\rho_{\bB_n}(\psi(\xi),\psi(\eta))^2
= \frac{(1-|\psi(\xi)|^2)%
(1-|\psi(\eta)|^2)}%
{|1
-\langle \psi(\xi),\psi(\eta)\rangle|^2}.
\]
Applying~\eqref{eq:psi_inner_prod}
and~\eqref{eq:psi_abs}
we obtain~\eqref{eq:rho_Siegel_efficient}.
\end{proof}

\begin{remark}\label{rem:pseudohyperbolic_distance_upper_halfplane}
For $n=1$, formulas~\eqref{eq:pseudohyperbolic_distance_Siegel} and~\eqref{eq:rho_Siegel_efficient} simplify to
\begin{equation}\label{eq:pseudohyperbolic_distance_upper_halfplane}
\rho_{\bH_1}(\xi,\eta)
=\frac{|\xi-\eta|}{|\conju{\xi}-\eta|},\qquad
1-\rho_{\bH_1}(\xi,\eta)^2
=\frac{4\Im(\xi)\Im(\eta)}{|\conju{\xi}-\eta|^2}.
\end{equation}
\end{remark}

\subsection*{A factor to preserve the norm when passing from $\bH_n$ to $\bB_n$}

The material of this subsection
is equivalent to some computations from
\cite[Section~2]{QuirogaVasilevski2007}.
Define $h_\al\colon\bH_n\to\bC$ by
\begin{equation}\label{eq:hdef}
h_\al(\xi)
\eqdef\left(\frac{2}{1-\iu \xi_n}\right)^{n+\al+1}.
\end{equation}

\begin{lemma}\label{lem:h_main_property}
For every $\xi$ in $\bH_n$,
\begin{equation}\label{eq:h_abs}
|h_\al(\xi)|^2
=\frac{4(1-|\psi(\xi)|^2)^\al
(J_\bR \psi)(\xi)}%
{(\Im(\xi_n)-|\xi'|^2)^\al}.
\end{equation}
For any $z$ in $\bB_n$,
\begin{equation}\label{eq:h_main_property}
\frac{1}{4}
|h_{\al}(\omega(z))|^2 
\,\left(\frac{1-|z|^2}{|1+z_n|^2}\right)^\al
\,(J_\bR \omega)(z)
=(1-|z|^2)^\al.
\end{equation}
\end{lemma}

\begin{proof}
Formula~\eqref{eq:h_abs} is obtained by~\eqref{eq:psi_abs} and~\eqref{eq:psi_real_Jacobian}.
Then~\eqref{eq:h_main_property} follows from~\eqref{eq:h_abs} and the well-known formula for the Jacobian of the inverse function.
\end{proof}

\begin{lemma}\label{lem:h_isometry}
Let $u\in L^2(\bB_n,\mu_\al)$.
Then
\begin{equation}\label{eq:h_isometry}
\|(u\circ\psi)\cdot h_\al\|_{L^2(\bH_n,\nu_\al)}
=\|u\|_{L^2(\bB_n,\mu_\al)}.
\end{equation}
\end{lemma}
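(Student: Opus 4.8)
The plan is to prove the isometry~\eqref{eq:h_isometry} by a direct change of variables, reducing the integral over the Siegel domain $\bH_n$ to an integral over the unit ball $\bB_n$ via the biholomorphism $\omega$, and then invoking the pointwise Jacobian identity~\eqref{eq:h_main_property} that was established in Lemma~\ref{lem:h_main_property}. This is the standard pattern used earlier for the ball in~\eqref{eq:g_isometry}, and the present lemma is the natural analog for passing between the two domains.

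First I would write out the left-hand side explicitly using the definition~\eqref{eq:measure_on_Siegel} of $\nu_\al$:
\[
\|(u\circ\psi)\cdot h_\al\|_{L^2(\bH_n,\nu_\al)}^2
=\frac{c_\al}{4}\int_{\bH_n}
|u(\psi(\xi))|^2\,|h_\al(\xi)|^2\,
(\Im(\xi_n)-|\xi'|^2)^\al\,\dif\mu(\xi).
\]
Then I would substitute $\xi=\omega(z)$, so that $\psi(\xi)=z$ and $\dif\mu(\xi)=(J_\bR\omega)(z)\,\dif\mu(z)$, turning the integral into one over $\bB_n$. In the integrand the factor $\Im(\xi_n)-|\xi'|^2$ becomes, by~\eqref{eq:psi_abs} applied with $\xi=\omega(z)$ (equivalently by the direct relation $\Im(\omega(z)_n)-|\omega(z)'|^2=\frac{1-|z|^2}{|1+z_n|^2}$), the quantity $\frac{1-|z|^2}{|1+z_n|^2}$. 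After this substitution the full weight collapses to exactly what Lemma~\ref{lem:h_main_property} controls.

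The key step is then to recognize that the product
\[
\frac{1}{4}\,|h_\al(\omega(z))|^2\,
\left(\frac{1-|z|^2}{|1+z_n|^2}\right)^\al
(J_\bR\omega)(z)
\]
equals $(1-|z|^2)^\al$ by~\eqref{eq:h_main_property}. Multiplying by the normalizing constant $c_\al$ and recalling~\eqref{eq:mu_alpha}, the remaining integral is precisely $\int_{\bB_n}|u(z)|^2\,\dif\mu_\al(z)=\|u\|_{L^2(\bB_n,\mu_\al)}^2$, which finishes the proof.

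The only genuine obstacle is bookkeeping rather than conceptual: one must make sure the spurious factor $|1+z_n|^{2\al}$ arising from the weight $(\Im(\xi_n)-|\xi'|^2)^\al=\bigl(\frac{1-|z|^2}{|1+z_n|^2}\bigr)^\al$ cancels correctly against the matching factor already packaged inside~\eqref{eq:h_main_property}, and that the constant $\tfrac14$ from $\dif\nu_\al$ pairs with the $\tfrac14$ in~\eqref{eq:h_main_property}. Since Lemma~\ref{lem:h_main_property} was deliberately stated with exactly these factors in place, the verification is immediate and no hard estimate is needed; the measurability and integrability issues are automatic because $u\in L^2(\bB_n,\mu_\al)$ and $\omega$ is a biholomorphism.
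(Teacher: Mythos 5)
Your proposal is correct and follows essentially the same route as the paper's own proof: write out the norm with the definition of $\nu_\al$, change variables via $\xi=\omega(z)$ using the weight identity from~\eqref{eq:psi_abs}, and collapse the integrand with~\eqref{eq:h_main_property} so that the constant $c_\al$ and the factor $\tfrac14$ recombine to give $\|u\|_{L^2(\bB_n,\mu_\al)}^2$. Nothing is missing; the bookkeeping of the $\tfrac14$ and $|1+z_n|^{2\al}$ factors you flag is exactly the content of Lemma~\ref{lem:h_main_property}, just as in the paper.
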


\begin{proof}
First, using \eqref{eq:psi_abs}, we observe that the change of variable $z=\psi(\zeta)$ transforms the weight function in the following way:
\[
(\Im(\zeta_n)-|\zeta'|^2)^\al
= \left(\frac{1-|z|^2}{|1+z_n|^2}\right)^\al.
\]
Apply this change of variables in the integral:
\begin{align*}
\|(u\circ\psi)\cdot h_{\al}\|_{L^2(\bH_n,\nu_\al)}^2
&=
\frac{c_\al}{4}
\int_{\bH^n}|u(\psi(\zeta))|^2
|h_{\al}(\zeta)|^2 (\Im(\zeta_n)-|\zeta'|^2)^\al
\dif\mu(\zeta)
\\
&=
c_\al
\int_{\bB^n}|u(z)|^2
|h_{\al}(\omega(z))|^2 
\,\left(\frac{1-|z|^2}{|1+z_n|^2}\right)^\al
\,(J_\bR \omega)(z)\,\dif\mu(z)
\\
&=
c_\al
\int_{\bB^n}|u(z)|^2
(1-|z|^2)^{\al}\dif\mu(z)
=\|u\|_{L^2(\bB_n,\mu_\al)}^2.
\qedhere
\end{align*}
\end{proof}

\subsection*{A factor to preserve the polyanalyticity when passing from the unit ball to the Siegel domain}

\begin{definition}\label{def:q}
Define $q_m\colon\bH_n\to\bC$,
\[
q_m(\xi)
\eqdef\left(\frac{1+\iu \conju{\xi_n}}%
{1-\iu\xi_n}\right)^{m-1}.
\]
\end{definition}

\begin{lemma}
\label{lem:composition_and_q}
Let $f\in\cA_m(\bB_n)$.
Then
$(f\circ\psi)\cdot q_m\in\cA_m(\bH_n)$.
\end{lemma}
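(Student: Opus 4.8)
The plan is to exploit the structural characterization $\cA_m(\bB_n)=\wt{\cA}_m(\bB_n)$ from Theorem~\ref{thm:Am_characterization}, exactly as was done for the Pessoa factor on the ball in Lemma~\ref{lem:composition_by_Pessoa}. First I would write $f$ in the form~\eqref{eq:polyanalytic_function_as_polynomial}, say $f(z)=\sum_{|j|<m}h_j(z)\,\conju{z}^{\,j}$ with each $h_j$ analytic, and then substitute $z=\psi(\xi)$. The key point is to understand, for each coordinate, what $\conju{\psi(\xi)}_s$ looks like: the first $n-1$ components of $\psi$ are $-\frac{2\iu\xi_s}{1-\iu\xi_n}$, so their conjugates are $\frac{2\iu\conju{\xi_s}}{1+\iu\conju{\xi_n}}$, and the last component conjugate is $\frac{1-\iu\conju{\xi_n}}{1+\iu\conju{\xi_n}}$. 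Thus every $\conju{\psi(\xi)}_s$ carries a denominator which is a power of $(1+\iu\conju{\xi_n})$. The whole purpose of the factor $q_m(\xi)=\bigl(\tfrac{1+\iu\conju{\xi_n}}{1-\iu\xi_n}\bigr)^{m-1}$ is to supply exactly $(1+\iu\conju{\xi_n})^{m-1}$ in the numerator to cancel these denominators.

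Concretely, I would observe that $\conju{\psi(\xi)}^{\,j}$ is a product over the $n$ coordinates, and collecting denominators shows $\conju{\psi(\xi)}^{\,j}$ equals a polynomial in $\conju{\xi_1},\ldots,\conju{\xi_n}$ of total degree $|j|$, divided by $(1+\iu\conju{\xi_n})^{|j|}$. Multiplying by $q_m(\xi)$ then gives
\[
(f\circ\psi)(\xi)\,q_m(\xi)
=\sum_{|j|<m}
\frac{h_j(\psi(\xi))}{(1-\iu\xi_n)^{m-1}}\,
\wt{N}_j(\conju{\xi})\,(1+\iu\conju{\xi_n})^{m-1-|j|},
\]
where $\wt{N}_j(\conju{\xi})$ is a polynomial in $\conju{\xi_1},\ldots,\conju{\xi_n}$ of total degree $|j|$ arising from the numerators. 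The coefficient $h_j(\psi(\xi))/(1-\iu\xi_n)^{m-1}$ is analytic on $\bH_n$ because $\psi$ is holomorphic and $1-\iu\xi_n$ is non-vanishing zero-free there; the remaining two factors are antiholomorphic polynomials of total degree $|j|+(m-1-|j|)=m-1$. Hence each summand lies in $\wt{\cA}_m(\bH_n)$, and by Lemma~\ref{lem:f_in_A_p_g_in_A_q_implies_fg_in_A_p+q-1} (or simply by the degree bound) the whole sum does too, so $(f\circ\psi)\cdot q_m\in\cA_m(\bH_n)$.

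The main obstacle is bookkeeping rather than conceptual: I must verify that the exponent $m-1-|j|$ is nonnegative for all terms, which holds precisely because $|j|<m$, and that the total antiholomorphic degree never exceeds $m-1$. The one subtlety worth care is the use of non-integer or formal powers in $q_m$ and in $(1-\iu\xi_n)^{m-1}$: here $m-1$ is a nonnegative integer, so these are genuine polynomial/rational expressions and no branch-cut issues from Remark~\ref{rem:def_power} intervene. I would also note that $1-\iu\xi_n$ and $1+\iu\conju{\xi_n}$ never vanish on $\bH_n$, since $\Im(\xi_n)>|\xi'|^2\ge0$ forces $\xi_n$ into the upper half-plane, guaranteeing the analyticity of the coefficients. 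With these checks, the proof mirrors Lemma~\ref{lem:composition_by_Pessoa} line for line, the factor $q_m$ playing on the Siegel side the role that $p_{m,a}$ played on the ball.
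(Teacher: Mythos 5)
Your proposal is correct and follows essentially the same route as the paper: write $f$ in the form~\eqref{eq:polyanalytic_function_as_polynomial}, compose with $\psi$, and use the factor $(1+\iu\conju{\xi_n})^{m-1}$ in the numerator of $q_m$ to cancel the denominators of $\conju{\psi(\xi)}^{\,j}$, leaving each summand as an analytic coefficient times a polynomial in $\conju{\xi_1},\ldots,\conju{\xi_n}$ of total degree at most $m-1$. Your added checks (nonnegativity of $m-1-|j|$, non-vanishing of $1-\iu\xi_n$ and $1+\iu\conju{\xi_n}$ on $\bH_n$) are details the paper leaves implicit, and they are verified correctly.
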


\begin{proof}
This proof is similar to the proof of Lemma~\ref{lem:composition_by_Pessoa}.
The main idea is that the factor
$(1+\iu\overline{\xi_n})^{m-1}$,
appearing in the numerator of $q_m(\xi)$,
cancels the denominators of the expressions $\overline{\psi(\xi)}^{\,j}$, where $|j|<m$.
We represent $f$ in the form~\eqref{eq:polyanalytic_function_as_polynomial}, compose with $\psi$, and multiply by $q_m$:
\begin{align*}
u(\xi)
&\eqdef
(f\circ\psi)(\xi) q_m(\xi)
=
\sum_{|j|\le m}
h_j(\psi(\xi))%
\,\left(\prod_{s=1}^{n-1}
\frac{(2\iu\conju{\xi_s})^{j_s}}%
{(1+\iu\conju{\xi_n})^{j_s}}
\right)
\frac{(1-\iu\conju{\xi_n})^{j_n}}%
{(1+\iu\conju{\xi_n})^{j_n}}
\;
\frac{(1+\iu\conju{\xi_n})^{m-1}}%
{(1-\iu\xi_n)^{m-1}}
\\
&=
\sum_{|j|\le m}
\frac{h_j(\psi(\xi))}%
{(1-\iu\xi_n)^{m-1}}
\left(\prod_{s=1}^{n-1}
(2\iu\conju{\xi_s})^{j_s}\right)
(1-\iu \conju{\xi_n})^{j_n}
(1+\iu\conju{\xi_n})^{m-|j|-1}.
\end{align*}
For each $j$, the corresponding summand is the product of an analytic function
by a polynomial in $\conju{\xi_1},\ldots,\conju{\xi_n}$ of total degree $m-1$.
\end{proof}

\begin{remark}
Another way to prove Lemma~\ref{lem:composition_and_q},
computing $\conju{D}^j u$,
seems to be more complicated.
We will show it only for $n=2$ and $m=2$.
In this case,
\[
u(\xi)
=
\frac{1+\iu\conju{\xi_2}}%
{1-\iu\xi_2}
\,
f\left(
-\frac{2\iu\xi_1}{1-\iu\xi_2},
\frac{1+\iu\xi_2}{1-\iu\xi_2}
\right),
\]
By the well-known chain rule and product rule for Wirtinger derivatives,
\begin{align*}
(\conju{D}^{(1,0)}u)(\xi)
&=\frac{2\iu(\conju{D}^{(1,0)}f)(\psi(\xi))}{1-\iu\xi_2}, \\
(\conju{D}^{(0,1)}u)(\xi)&=\frac{\iu f(\psi(\xi))}{1-\iu\xi_2}\, + \, \frac{2\left(\left(\conju{\xi_1}\,\conju{D}^{(1,0)}-\iu\conju{D}^{(0,1)}\right) f \right)(\psi(\xi))}{(1-\iu\xi_2)(1+\iu\conju{\xi_2})}, \\ 
(\conju{D}^{(2,0)}u)(\xi)
&=\frac{-4(\conju{D}^{(2,0)}f)(\psi(\xi))}{(1-\iu\xi_2)(1+\iu\conju{\xi_2})}, \\
(\conju{D}^{(1,1)}u)(\xi)
&=\frac{4\left(\left(\iu\conju{\xi_1}\,\conju{D}^{(2,0)}+\conju{D}^{(1,1)}\right)f\right)(\psi(\xi))}{(1-\iu\xi_2)(1+\iu\conju{\xi_2})^2}, \\
(\conju{D}^{(0,2)}u)(\xi)
&=\frac{4\left(\left(\conju{\xi_2}^2\,\conju{D}^{(2,0)}
-2\iu\conju{\xi_1}\,\conju{D}^{(1,1)}
-\conju{D}^{(0,2)}\right)f\right)(\psi(\xi))}{(1-\iu\xi_2)(1+\iu\conju{\xi_2})^3}.
\end{align*}
Since $f\in\cA_2(\bB_n)$,
we conclude that $u\in\cA_2(\bH_n)$.
\end{remark}

\subsection*{A weighted change of variables which unitarily maps $\cA_m^2(\bB_n,\mu_\al)$
onto
$\cA_m^2(\bH_n,\nu_\al)$}

\begin{definition}
Define
$V\colon \cA_m^2(\bB_n,\mu_\al)\to \cA_m^2(\bH_n,\nu_\al)$ by
$Vu\eqdef (u\circ\psi)\cdot h_\al\cdot q_m$, i.e.,
\[
(Vu)(\xi)\eqdef u(\psi(\xi)) h_\al(\xi) q_m(\xi).
\]
\end{definition}

\begin{proposition}
\label{prop:V_is_well_defined_and_unitary}
$V$ is a well-defined unitary operator
$\cA_m^2(\bB_n,\mu_\al)\to\cA_m^2(\bH_n,\nu_\al)$.
\end{proposition}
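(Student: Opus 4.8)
The plan is to verify that $V$ has all three required properties: it maps $\cA_m^2(\bB_n,\mu_\al)$ into $\cA_m^2(\bH_n,\nu_\al)$, it is an isometry, and it is surjective. Each of these has essentially already been prepared by the three lemmas of this section, so the proof amounts to assembling them in the right order.

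\begin{proof}
We must check three things.

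First, $V$ maps into the correct space. Given $u\in\cA_m^2(\bB_n,\mu_\al)$, Lemma~\ref{lem:composition_and_q} shows that $(u\circ\psi)\cdot q_m\in\cA_m(\bH_n)$. Multiplying by the holomorphic factor $h_\al$ preserves homogeneous polyanalyticity of total order $m$, since by Lemma~\ref{lem:f_in_A_p_g_in_A_q_implies_fg_in_A_p+q-1} (with $h_\al\in\cA(\bH_n)=\wt{\cA}_1(\bH_n)$) the product lies in $\wt{\cA}_m(\bH_n)=\cA_m(\bH_n)$. Thus $Vu\in\cA_m(\bH_n)$.

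Second, $V$ is an isometry. By Definition~\ref{def:q}, $|q_m(\xi)|=1$ for every $\xi$ in $\bH_n$, because $|1+\iu\conju{\xi_n}|=|1-\iu\xi_n|$. Therefore
\[
\|Vu\|_{L^2(\bH_n,\nu_\al)}
=\|(u\circ\psi)\cdot h_\al\cdot q_m\|_{L^2(\bH_n,\nu_\al)}
=\|(u\circ\psi)\cdot h_\al\|_{L^2(\bH_n,\nu_\al)}
=\|u\|_{L^2(\bB_n,\mu_\al)},
\]
where the last equality is exactly Lemma~\ref{lem:h_isometry}. In particular $Vu\in L^2(\bH_n,\nu_\al)$, so combined with the first step we have $Vu\in\cA_m^2(\bH_n,\nu_\al)$ and $V$ is a well-defined isometry.

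Third, $V$ is surjective. Since $\omega=\psi^{-1}$ is the inverse biholomorphism, one builds the inverse operator explicitly: for $w\in\cA_m^2(\bH_n,\nu_\al)$ set
\[
(V^{-1}w)(z)\eqdef\frac{w(\omega(z))}{h_\al(\omega(z))\,q_m(\omega(z))}.
\]
The factors $h_\al$ and $q_m$ are nonvanishing on $\bH_n$, so this is well defined, and a direct check gives $V(V^{-1}w)=w$ and $V^{-1}(Vu)=u$. An isometry that is invertible is unitary, so $V$ is unitary.
\end{proof}

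The main obstacle, as in the analogous Proposition~\ref{prop:Ua_is_unitary} on the ball, is the verification that the weighted composition preserves the polyanalytic class; but this is precisely the content of Lemma~\ref{lem:composition_and_q}, whose proof exploits the cancellation of the denominators of $\overline{\psi(\xi)}^{\,j}$ by the numerator of $q_m$. Once that lemma is in hand, the isometry reduces to the unimodularity of $q_m$ together with Lemma~\ref{lem:h_isometry}, and surjectivity follows formally from the invertibility of the underlying Cayley transform.
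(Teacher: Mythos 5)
Your proof is correct and follows essentially the same route as the paper: Lemma~\ref{lem:composition_and_q} for the polyanalyticity of $Vu$, unimodularity of $q_m$ combined with Lemma~\ref{lem:h_isometry} for the isometric property, and the explicit inverse $w\mapsto\frac{w\circ\omega}{(h_\al\circ\omega)\,(q_m\circ\omega)}$ (which the paper writes as the adjoint $V^\ast$) for surjectivity. Your additional appeal to Lemma~\ref{lem:f_in_A_p_g_in_A_q_implies_fg_in_A_p+q-1} to absorb the analytic factor $h_\al$ is a slight refinement over the paper, since Lemma~\ref{lem:composition_and_q} as stated covers only $(u\circ\psi)\cdot q_m$.
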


\begin{proof}
Lemma~\ref{lem:composition_and_q}
assures that $Vu\in\cA_m(\bH_n)$
for every $u$ in $\cA_m^2(\bB_n,\mu_\al)$.
Lemma~\ref{lem:h_isometry},
combined with the identity
$|q_m(\xi)|=1$,
provides the isometric property of $V$.
It is easy to verify that the adjoint operator $V^\ast$ acts by
\begin{equation}\label{eq:Vast}
(V^*f)(z)
=\frac{f(\omega(z))}{h_\al(\omega(z))q_m(\omega(z))},
\end{equation}
and that $V^\ast$
is the inverse operator to $V$.
\end{proof}

\subsection*{Computation of the RK on the Siegel domain}

We define $t^\be$
via the principal argument of $t$,
see Remark~\ref{rem:def_power}.
The formulas $(tu)^\be=t^\be u^\be$ and $(t/u)^\be$ are not always true.
Let us recall some sufficient conditions for these formulas to be true.

\begin{lemma}\label{lem:power_of_product_or_quotient}
Let $t,u\in\bC\setminus\{0\}$ and $\be\in\bC$.
\begin{enumerate}
\item If $\Re(t)>0$ and $\Re(u)>0$,
then $(tu)^\be=t^\be u^\be$.
\item If $\Re(t)>0$ and $\Re(t/u)>0$,
then $(t/u)^\be=t^\be/u^\be$.
\end{enumerate}
\end{lemma}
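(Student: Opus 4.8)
The plan is to reduce both statements to the elementary additivity property of the complex logarithm, keeping careful track of when the relevant arguments stay inside the principal branch. Recall from Remark~\ref{rem:def_power} that $t^\be=\exp(\be\log(t))$ with $\log(t)=\log_\bR|t|+\iu\arg(t)$ and $\arg(t)\in(-\pi,\pi]$. The general identity we would like is $\log(tu)=\log(t)+\log(u)$, which holds precisely when $\arg(t)+\arg(u)\in(-\pi,\pi]$; when this fails, the two sides differ by an additive $2\pi\iu$, and exponentiating by $\be$ can then produce a discrepancy. So the whole matter comes down to controlling the sum of the principal arguments.

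For part~(1), the plan is to observe that $\Re(t)>0$ forces $\arg(t)\in(-\tfrac{\pi}{2},\tfrac{\pi}{2})$, and likewise $\arg(u)\in(-\tfrac{\pi}{2},\tfrac{\pi}{2})$. Hence $\arg(t)+\arg(u)\in(-\pi,\pi)$, which lies inside the principal range, so $\arg(tu)=\arg(t)+\arg(u)$ and therefore $\log(tu)=\log(t)+\log(u)$. Multiplying by $\be$ and exponentiating yields
\[
(tu)^\be
=\exp\bigl(\be\log(tu)\bigr)
=\exp\bigl(\be\log(t)+\be\log(u)\bigr)
=\exp\bigl(\be\log(t)\bigr)\exp\bigl(\be\log(u)\bigr)
=t^\be u^\be,
\]
where the middle step uses that $\exp$ turns sums into products. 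This is the routine direction.

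For part~(2), the plan is to apply part~(1) to a suitable pair. Set $w\eqdef t/u$, so $\Re(w)>0$ by hypothesis, and also $\Re(t)>0$. The idea is to write $t=w\cdot u$ and try to invoke part~(1); however, part~(1) requires $\Re(u)>0$, which is \emph{not} among the hypotheses, so a direct application is not available. Instead I would argue directly at the level of arguments: from $\Re(t)>0$ and $\Re(w)>0$ we get $\arg(t),\arg(w)\in(-\tfrac{\pi}{2},\tfrac{\pi}{2})$, hence $\arg(t)-\arg(w)\in(-\pi,\pi)$. Since $u=t/w$ with $|u|=|t|/|w|$ and $\arg(u)\equiv\arg(t)-\arg(w)\pmod{2\pi}$, the containment just established shows $\arg(u)=\arg(t)-\arg(w)$ exactly, giving $\log(t/w)=\log(t)-\log(w)$, i.e. $\log(u)=\log(t)-\log(w)$. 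Equivalently $\log(t/u)=\log(t)-\log(u)$, and then exponentiating by $\be$ as before yields $(t/u)^\be=t^\be/u^\be$.

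The one subtle point, and the only place requiring genuine care, is the half-plane bound on the principal argument together with the boundary behaviour at $\arg=\pi$: the principal range is the half-open interval $(-\pi,\pi]$, so one must confirm that the relevant sums and differences of arguments land in the \emph{open} interval $(-\pi,\pi)$, avoiding any ambiguity at the endpoint. Both hypotheses, phrased via strict inequalities $\Re(\cdot)>0$, are exactly what guarantee the strict bounds $|\arg(\cdot)|<\tfrac{\pi}{2}$ and hence the safe open-interval containments; this is why the positivity of real parts, rather than mere nonvanishing, is assumed. I expect no real obstacle beyond stating these interval containments precisely.
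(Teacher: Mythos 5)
Your proof is correct, and part (1) coincides with the paper's argument: strict positivity of the real parts confines both principal arguments to $(-\tfrac{\pi}{2},\tfrac{\pi}{2})$, hence their sum to $(-\pi,\pi)$, so $\arg(tu)=\arg(t)+\arg(u)$ and the identity follows by exponentiating. For part (2) you take a genuinely different route. The paper's proof is a one-line reduction: apply part (1) to the pair $t$ and $u/t$. You rejected a reduction to part (1) because the pair you tried, namely $t/u$ and $u$, lacks the hypothesis $\Re(u)>0$; but the paper's choice of pair does work, since $\Re(1/z)=\Re(z)/|z|^2$ shows that $\Re(t/u)>0$ forces $\Re(u/t)>0$, whence part (1) yields $u^\be=t^\be\,(u/t)^\be$, and then $(t/u)^\be=t^\be/u^\be$ follows because $\log(t/u)=-\log(u/t)$ (valid as $\arg(u/t)\ne\pi$). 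Your replacement, a direct computation with principal arguments showing $\arg(u)=\arg(t)-\arg(t/u)$ exactly and hence $\log(t/u)=\log(t)-\log(u)$, is complete and correct; it has the merit of being self-contained and of making explicit the endpoint issue at the argument value $\pi$, which the paper's terse reduction leaves implicit. The trade-off is brevity versus transparency: the paper reuses part (1) and is shorter, while your version shows exactly where the strict inequalities $\Re(\cdot)>0$ enter, at the cost of redoing the argument bookkeeping; it would be worth noting in your write-up that the reduction to part (1) is in fact available with the other pair.
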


\begin{proof}
1. The assumptions on $t$ and $u$
imply that $\arg(tu)=\arg(t)+\arg(u)$.\\
2. Follows from part 1 applied to $t$ and $u/t$.
\end{proof}

\begin{lemma}\label{lem:magic_power}
Let $\xi,\eta\in\bH_n$ and $\be\ge0$.
Then
\[
(1-\langle\psi(\xi),\psi(\eta)\rangle)^\be
=\frac{4^\be
\left(\frac{\xi_n-\conju{\eta_n}}{2\iu}-\langle \xi',\eta'\rangle\right)^\be}%
{(1-\iu\xi_n)^\be\,(1+\iu\conju{\eta_n})^\be}.
\]
\end{lemma}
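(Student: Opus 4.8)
The plan is to raise formula~\eqref{eq:psi_inner_prod} to the power $\be$ and then distribute this power across the product and the quotient, which is justified only after verifying the real-part hypotheses of Lemma~\ref{lem:power_of_product_or_quotient}. To this end I would abbreviate $A\eqdef\frac{\xi_n-\conju{\eta_n}}{2\iu}-\langle\xi',\eta'\rangle$, $B\eqdef 1-\iu\xi_n$, and $C\eqdef 1+\iu\conju{\eta_n}$, so that~\eqref{eq:psi_inner_prod} becomes $1-\langle\psi(\xi),\psi(\eta)\rangle=\frac{4A}{BC}$. The whole argument then reduces to checking that $\Re(A)$, $\Re(B)$, $\Re(C)$, and $\Re\bigl(\frac{4A}{BC}\bigr)$ are all positive; note that these conditions do not involve $\be$.

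First I would dispatch the easy positivity conditions. A direct separation of real and imaginary parts gives $\Re(B)=1+\Im(\xi_n)$ and $\Re(C)=1+\Im(\eta_n)$, both of which exceed $1$ because $\xi,\eta\in\bH_n$ force $\Im(\xi_n)>|\xi'|^2\ge 0$ and $\Im(\eta_n)>|\eta'|^2\ge 0$. Since $\psi$ maps $\bH_n$ into $\bB_n$, the Cauchy--Schwarz inequality gives $|\langle\psi(\xi),\psi(\eta)\rangle|\le|\psi(\xi)|\,|\psi(\eta)|<1$, so that $\Re\bigl(\frac{4A}{BC}\bigr)=\Re\bigl(1-\langle\psi(\xi),\psi(\eta)\rangle\bigr)>0$.

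The one genuine obstacle is the positivity of $\Re(A)$. Using $\frac{1}{2\iu}=-\frac{\iu}{2}$ one finds $\Re\bigl(\frac{\xi_n-\conju{\eta_n}}{2\iu}\bigr)=\frac{\Im(\xi_n)+\Im(\eta_n)}{2}$, hence $\Re(A)=\frac{\Im(\xi_n)+\Im(\eta_n)}{2}-\Re\langle\xi',\eta'\rangle$. Combining $\Im(\xi_n)>|\xi'|^2$ and $\Im(\eta_n)>|\eta'|^2$ with the chain $\Re\langle\xi',\eta'\rangle\le|\langle\xi',\eta'\rangle|\le|\xi'|\,|\eta'|\le\frac{|\xi'|^2+|\eta'|^2}{2}$ (Cauchy--Schwarz followed by the AM--GM inequality) yields $\Re(A)>0$. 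With all four real parts positive, the conclusion is pure bookkeeping: part~1 of Lemma~\ref{lem:power_of_product_or_quotient}, applied to $(4,A)$ and to $(B,C)$, gives $(4A)^\be=4^\be A^\be$ and $(BC)^\be=B^\be C^\be$; part~2, applied to $(4A,BC)$, gives $\bigl(\frac{4A}{BC}\bigr)^\be=\frac{(4A)^\be}{(BC)^\be}$. Substituting these into the $\be$-th power of~\eqref{eq:psi_inner_prod} produces the stated identity.
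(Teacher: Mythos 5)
Your proof is correct and follows essentially the same route as the paper's: both raise \eqref{eq:psi_inner_prod} to the power $\be$ and distribute the exponent via Lemma~\ref{lem:power_of_product_or_quotient}, after checking positivity of the relevant real parts. The one substantive difference is in your favor: part~2 of Lemma~\ref{lem:power_of_product_or_quotient}, applied to the pair $(4A,BC)$, requires $\Re(4A)>0$ in addition to $\Re\bigl(4A/(BC)\bigr)>0$, and the paper's proof verifies only the latter (together with $\Re(B)>0$ and $\Re(C)>0$), leaving the numerator's real part unexamined; your Cauchy--Schwarz/AM--GM argument that $\Re(A)=\tfrac{1}{2}\bigl(\Im(\xi_n)+\Im(\eta_n)\bigr)-\Re\langle\xi',\eta'\rangle>0$ supplies exactly the hypothesis the printed proof leaves unchecked, so your write-up is slightly more complete than the paper's.
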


\begin{proof}
Due to~\eqref{eq:psi_inner_prod},
$1-\langle\psi(\xi),\psi(\eta)\rangle=t/(uv)$,
where
\[
t\eqdef
4\left(\frac{\xi_n-\conju{\eta_n}}{2\iu}-\langle \xi',\eta'\rangle\right),\qquad
u\eqdef 1-\iu\xi_n,\qquad
v\eqdef 1+\iu\conju{\eta_n}.
\]
Since $|\psi(\xi)|<1$ and $|\psi(\eta)|<1$,
we obtain
\[
\Re(t/(uv))
=\Re(1-\langle\psi(\xi),\psi(\eta)\rangle)>0.
\]
Furthermore, $\Re(u)=1+\Im(\xi_n)>0$ and $\Re(v)=1+\Im(\eta_n)>0$.
So, by Lemma~\ref{lem:power_of_product_or_quotient},
\[
\left(\frac{t}{uv}\right)^\be
=\frac{t^\be}{(uv)^\be}
=\frac{t^\be}{u^\be v^\be}.
\qedhere
\]
\end{proof}

\begin{theorem}
\label{thm:RK_Siegel}
Let $n,m\in\bN$ and $\al>-1$.
Then for every $\xi$ in $\bH_n$,
the following function $\wt{K}_\xi$
is the reproducing kernel of $\cA_m^2(\bH_n,\nu_\al)$
at the point $\xi$:
\begin{equation}\label{eq:RK_Siegel}
\wt{K}_{\xi}(\eta)
=\frac{\left(\frac{\xi_n-\conju{\eta_n}}{2\iu}
-\langle \xi',\eta'\rangle\right)^{m-1}}%
{\left(\frac{\eta_n-\conju{\xi_n}}{2\iu}
-\langle \eta',\xi'\rangle\right)^{n+m+\al}}\;
R_{m-1}^{(\al,n-1)}(\rho_{\bH_n}(\xi,\eta)^2).
\end{equation}
\end{theorem}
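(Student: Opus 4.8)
The plan is to obtain $\wt{K}_\xi$ by transporting the ball kernel through the unitary operator $V$. By Proposition~\ref{prop:V_is_well_defined_and_unitary}, $V$ is a unitary operator of exactly the form required in Proposition~\ref{prop:pushforward_RK}, with $H_1=\cA_m^2(\bB_n,\mu_\al)$, $H_2=\cA_m^2(\bH_n,\nu_\al)$, the Cayley inverse $\psi\colon\bH_n\to\bB_n$ playing the role of the symbol, and $J\eqdef h_\al\cdot q_m$ the weight. Writing $K_z$ for the kernel of $\cA_m^2(\bB_n,\mu_\al)$ from Theorem~\ref{thm:RK_ball}, Proposition~\ref{prop:pushforward_RK} gives
\[
\wt{K}_\xi(\eta)
=\overline{h_\al(\xi)\,q_m(\xi)}\;h_\al(\eta)\,q_m(\eta)\;K_{\psi(\xi)}(\psi(\eta)).
\]
It then only remains to substitute the explicit expressions and simplify.

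The Jacobi factor needs no work: since $\rho_{\bB_n}(\psi(\xi),\psi(\eta))=\rho_{\bH_n}(\xi,\eta)$ by Definition~\ref{def:pseudohyperbolic_distance_Siegel}, the factor $R_{m-1}^{(\al,n-1)}(\rho_{\bB_n}(\psi(\xi),\psi(\eta))^2)$ inside $K_{\psi(\xi)}(\psi(\eta))$ is already $R_{m-1}^{(\al,n-1)}(\rho_{\bH_n}(\xi,\eta)^2)$, as in~\eqref{eq:RK_Siegel}. So I would reduce the claim to checking that the remaining rational prefactor equals $A^{m-1}/B^{\,n+m+\al}$, where
\[
A\eqdef\frac{\xi_n-\conju{\eta_n}}{2\iu}-\langle\xi',\eta'\rangle,
\qquad
B\eqdef\frac{\eta_n-\conju{\xi_n}}{2\iu}-\langle\eta',\xi'\rangle.
\]
Using Lemma~\ref{lem:magic_power} with $\be=m-1\ge0$ on the numerator and with $\be=n+m+\al>0$ (note $n,m\ge1$, $\al>-1$) on the denominator, I rewrite $(1-\langle\psi(\xi),\psi(\eta)\rangle)^{m-1}$ and $(1-\langle\psi(\eta),\psi(\xi)\rangle)^{n+m+\al}$ in terms of $A$, $B$ and powers of $1\pm\iu\xi_n$, $1\pm\iu\conju{\xi_n}$ and their $\eta$-analogues. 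The conjugated weight $\overline{h_\al(\xi)\,q_m(\xi)}$ is handled by $\overline{t^\be}=\conju{t}^{\,\be}$ for real $\be$ whenever $t$ avoids the negative real axis; this applies since $\Re(1-\iu\xi_n)=1+\Im(\xi_n)>0$, yielding $\overline{h_\al(\xi)}=(2/(1+\iu\conju{\xi_n}))^{n+\al+1}$ and $\overline{q_m(\xi)}=((1-\iu\xi_n)/(1+\iu\conju{\xi_n}))^{m-1}$.

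At this point the computation is pure bookkeeping of exponents. I expect the constants to cancel exactly: the factor $2^{n+\al+1}$ from each of $\overline{h_\al(\xi)}$ and $h_\al(\eta)$, together with $4^{m-1}$ from the numerator of $K$, cancels $4^{n+m+\al}$ from its denominator. Each of the four linear factors $1-\iu\xi_n$, $1+\iu\conju{\xi_n}$, $1-\iu\eta_n$, $1+\iu\conju{\eta_n}$ collects total exponent $0$ once the contributions of $\overline{h_\al(\xi)}$, $\overline{q_m(\xi)}$, $h_\al(\eta)$, $q_m(\eta)$ and the numerator and denominator of $K_{\psi(\xi)}(\psi(\eta))$ are summed. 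What survives is precisely $A^{m-1}/B^{\,n+m+\al}$, which is~\eqref{eq:RK_Siegel}.

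The only genuine subtlety — and the step deserving care rather than mechanical manipulation — is the legitimacy of splitting and conjugating the non-integer complex powers, since in general $(tu)^\be\ne t^\be u^\be$ under the principal branch. This is exactly what Lemmas~\ref{lem:power_of_product_or_quotient} and~\ref{lem:magic_power} are built to license, and the half-space geometry (all relevant quantities having positive real part) is what makes their hypotheses hold; I would therefore invoke them explicitly at each splitting instead of tracking branch cuts by hand.
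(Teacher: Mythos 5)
Your proposal is correct and follows essentially the same route as the paper's own proof: transport the ball kernel of Theorem~\ref{thm:RK_ball} through the unitary $V$ of Proposition~\ref{prop:V_is_well_defined_and_unitary} with weight $J=h_\al\, q_m$, then apply Lemma~\ref{lem:magic_power} and exponent bookkeeping (your accounting of the powers of $2$, $4$, and the four linear factors matches the paper's displayed computation exactly) to arrive at~\eqref{eq:RK_Siegel}. The only differences are cosmetic: you cite Proposition~\ref{prop:pushforward_RK}, which is indeed the statement being used, whereas the paper's text points to Proposition~\ref{prop:pusforward_kernel} (a slip, since that one concerns abstract positive kernels rather than reproducing kernels), and you spell out the branch-of-power justifications for conjugating $h_\al(\xi)\,q_m(\xi)$ that the paper leaves implicit.
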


\begin{proof}
Due to Proposition~\ref{prop:V_is_well_defined_and_unitary},
we can apply Proposition~\ref{prop:pusforward_kernel}
with $H_1=\cA_m(\bB_n,\mu_\al)$, $H_2=\cA_m(\bH_n,\nu_\al)$,
and $J(\xi)\eqdef h_\al(\xi)q_m(\xi)$.
So, for every $\xi$ in $\bH_n$,
the next function is the RK of $\cA_m(\bH_n,\nu_\al)$ associated to the point $\xi$:
\[
\wt{K}_\xi(\eta)
=\conju{h_\al(\xi)q_m(\xi)}
h_\al(\eta) q_m(\eta)
K_{\psi(\xi)}(\psi(\eta)).
\]
Substitute formula~\eqref{eq:RK_ball} for $K$:
\[
\wt{K}_\xi(\eta)
=\conju{h_\al(\xi)q_m(\xi)}
h_\al(\eta) q_m(\eta)\;
\frac{\left(1-\langle \psi(\xi),\psi(\eta) \rangle \right)^{m-1}}%
{\left( 1-\langle \psi(\eta),\psi(\xi) \rangle \right)^{n+m+\al}} 
\;R_{m-1}^{(\al,n-1)}(\rho_{\bH_n}(\xi,\eta)^2).
\]
Then, substitute
the definitions of $h_\al$, $q_m$
and use Lemma~\ref{lem:magic_power}:
\begin{align*}
\wt{K}_\xi(\eta)
&=
R_{m-1}^{(\al,n-1)}(\rho_{\bH_n}(\xi,\eta)^2)\;
\frac{2^{n+\al+1}(1+\iu\conju{\eta_n})^{m-1}}{(1-\iu\eta_n)^{n+m+\al}}\;
\frac{2^{n+\al+1}\,(1-\iu\xi_n)^{m-1}}%
{(1+\iu\conju{\xi_n})^{n+m+\al}}
\\
&\qquad\times
\frac{4^{m-1} \left(\frac{\xi_n-\conju{\eta_n}}{2\iu}-\langle \xi',\eta'\rangle\right)^{m-1}}%
{(1-\iu\xi_n)^{m-1}\,
(1+\iu\conju{\eta_n})^{m-1}}\;
\frac{(1+\iu\conju{\xi_n})^{n+m+\al}\,
(1-\iu\eta_n)^{n+m+\al}}%
{4^{n+m+\al}
\left(\frac{\eta_n-\conju{\xi_n}}{2\iu}
-\langle \eta',\xi'\rangle\right)^{n+m+\al}}.
\end{align*}
Simplifying this expression we obtain
the right-hand side of~\eqref{eq:RK_Siegel}.
\end{proof}

\begin{corollary}
\label{cor:RK_Siegel_norm}
Let $n,m\in\bN$ and $\al>-1$.
Then for every $\xi$ in $\bH_n$,
\begin{equation}\label{eq:Kz_Siegel_norm}
\|\wt{K}_\xi\|_{\cA_m^2(\bH_n,\nu_\al)}^2
=\wt{K}_\xi(\xi)
=\binom{n+m-1}{n}\,%
\frac{\Be(\al+1,n)}%
{\Be(\al+m,n)}\,
\frac{1}{(\Im(\xi_n)-|\xi'|^2)^{\al+n+1}}.
\end{equation}
\end{corollary}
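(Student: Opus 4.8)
The plan is to evaluate the kernel formula~\eqref{eq:RK_Siegel} on the diagonal $\eta=\xi$ and to identify the result with the known diagonal value of the ball kernel from Corollary~\ref{cor:RK_ball_norm}. First I would recall the general RKHS fact that $\|\wt{K}_\xi\|_{\cA_m^2(\bH_n,\nu_\al)}^2=\langle \wt{K}_\xi,\wt{K}_\xi\rangle=\wt{K}_\xi(\xi)$, which justifies the first equality in~\eqref{eq:Kz_Siegel_norm}. So the whole problem reduces to computing $\wt{K}_\xi(\xi)$.

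The cleanest route, which avoids re-deriving anything, is to exploit the unitary $V$ of Proposition~\ref{prop:V_is_well_defined_and_unitary}: since $V$ is unitary and the pushforward construction gives $\wt{K}_\xi=\overline{J(\xi)}\,(VK_{\psi(\xi)})$ with $J=h_\al q_m$ (as established in the proof of Theorem~\ref{thm:RK_Siegel}), we have $\|\wt{K}_\xi\|_{\cA_m^2(\bH_n,\nu_\al)}^2=|J(\xi)|^2\,\|K_{\psi(\xi)}\|_{\cA_m^2(\bB_n,\mu_\al)}^2$. Now $|q_m(\xi)|=1$, and $|h_\al(\xi)|^2=(2/|1-\iu\xi_n|^2)^{n+\al+1}\cdot 2^{\,?}$—more precisely, by Definition~\ref{def:q} and~\eqref{eq:hdef}, $|h_\al(\xi)|^2=\bigl(\tfrac{4}{|1-\iu\xi_n|^2}\bigr)^{n+\al+1}$. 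Combining this with Corollary~\ref{cor:RK_ball_norm} evaluated at $z=\psi(\xi)$, and then substituting the identity~\eqref{eq:psi_abs}, namely $1-|\psi(\xi)|^2=4(\Im(\xi_n)-|\xi'|^2)/|1-\iu\xi_n|^2$, the factors of $|1-\iu\xi_n|^2$ cancel and the powers of $4$ collapse, leaving $\binom{n+m-1}{n}\frac{\Be(\al+1,n)}{\Be(\al+m,n)}\,(\Im(\xi_n)-|\xi'|^2)^{-(\al+n+1)}$.

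Alternatively, and perhaps more in the spirit of the preceding proof, I would substitute $\eta=\xi$ directly into~\eqref{eq:RK_Siegel}. On the diagonal, $\rho_{\bH_n}(\xi,\xi)=0$, so the Jacobi factor reduces to $R_{m-1}^{(\al,n-1)}(0)$, which by the explicit formula~\eqref{eq:R_explicit} (only the $s=0$ term survives) equals $\binom{n+m-1}{n}\frac{\Be(\al+1,n)}{\Be(\al+m,n)}$—this matches the constant appearing in~\eqref{eq:Kz_ball_norm}, as expected since $R_{m-1}^{(\al,n-1)}(0)=K_0(0)$ in the ball. Meanwhile the rational prefactor becomes $\bigl(\tfrac{\xi_n-\conju{\xi_n}}{2\iu}-|\xi'|^2\bigr)^{m-1}\big/\bigl(\tfrac{\xi_n-\conju{\xi_n}}{2\iu}-|\xi'|^2\bigr)^{n+m+\al}$, and since $\tfrac{\xi_n-\conju{\xi_n}}{2\iu}=\Im(\xi_n)$, both the numerator and the denominator are positive real powers of $\Im(\xi_n)-|\xi'|^2$, so no branch-cut subtlety arises and they combine to $(\Im(\xi_n)-|\xi'|^2)^{-(\al+n+1)}$.

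The main obstacle is purely bookkeeping: one must verify the normalizing constant $R_{m-1}^{(\al,n-1)}(0)=\binom{n+m-1}{n}\frac{\Be(\al+1,n)}{\Be(\al+m,n)}$ from the Jacobi/Beta-function formulas, and confirm that combining the numerator and denominator powers of the positive real quantity $\Im(\xi_n)-|\xi'|^2$ is legitimate (it is, by Lemma~\ref{lem:power_of_product_or_quotient}, since the base is a positive real number). No genuinely hard step is involved; the unitary route essentially reduces the claim to Corollary~\ref{cor:RK_ball_norm} with a one-line Jacobian computation, which is why I would present that as the primary argument and mention the direct substitution as a check.
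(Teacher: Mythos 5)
Your proposal is correct and matches the intended argument: the paper states this corollary without a separate proof precisely because it follows by evaluating~\eqref{eq:RK_Siegel} on the diagonal, which is your second route --- $\rho_{\bH_n}(\xi,\xi)=0$, the prefactor is a quotient of powers of the positive real number $\Im(\xi_n)-|\xi'|^2$ (so no branch issues arise and the exponents combine to $-(n+\al+1)$), and the constant $R_{m-1}^{(\al,n-1)}(0)=\binom{n+m-1}{n}\,\Be(\al+1,n)/\Be(\al+m,n)$ checks out from~\eqref{eq:R_explicit}, matching the constant in Corollary~\ref{cor:RK_ball_norm}. Your primary route, via the unitary $V$ and the ball norm formula~\eqref{eq:Kz_ball_norm} together with $|q_m(\xi)|=1$, $|h_\al(\xi)|^2=\bigl(4/|1-\iu\xi_n|^2\bigr)^{n+\al+1}$ and~\eqref{eq:psi_abs}, is an equivalent and equally valid reduction.
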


\begin{remark}
Analogously to the case of the unit ball, we get some formulas equivalent  to~\eqref{eq:RK_Siegel}, using~\eqref{eq:rho_Siegel_efficient} and~\eqref{eq:Jacobi_explicit}:
\begin{align}
\label{eq:RK_Siegel_Jacobi}
\wt{K}_\xi(\eta)
&= \frac{\left(\frac{\xi_n-\conju{\eta_n}}{2\iu}
-\langle \xi',\eta'\rangle\right)^{m-1}}%
{\left(\frac{\eta_n-\conju{\xi_n}}{2\iu}
-\langle \eta',\xi'\rangle\right)^{n+m+\al}}\;%
\frac{(-1)^{m-1}\Be(\al+1,n)}{\Be(\al+m,n)}
P_{m-1}^{(\al,n)}(2\rho_{\bH_n}(\xi,\eta)^2-1)
\\[1.5ex]
\nonumber
&=\frac{\left(\frac{\xi_n-\conju{\eta_n}}{2\iu}
-\langle \xi',\eta'\rangle\right)^{m-1}}%
{\left(\frac{\eta_n-\conju{\xi_n}}{2\iu}
-\langle \eta',\xi'\rangle\right)^{n+m+\al}}\;
\frac{(-1)^{m-1}\Be(\al+1,n)}{\Be(\al+m,n)}
\times
\\
\label{eq:RK_Siegel_Jacobi_2}
&\qquad \times P_{m-1}^{(\al,n)}
\left(1-\frac{2(\Im(\xi_n)-|\xi'|^2)%
(\Im(\eta_n)-|\eta'|^2)}%
{\left|\frac{\xi_n-\conju{\eta_n}}{2\iu}%
-\langle \xi',\eta'\rangle\right|^2}\right)
\\[1.5ex]
\nonumber
&=
\frac{\left(\frac{\xi_n-\conju{\eta_n}}{2\iu}
-\langle \xi',\eta'\rangle\right)^{m-1}}%
{\left(\frac{\eta_n-\conju{\xi_n}}{2\iu}
-\langle \eta',\xi'\rangle\right)^{n+m+\al}}\;%
\frac{(-1)^{m-1}\,\Gamma(\al+1)}{\Gamma(\al+n+1)\,(m-1)!} \times
\\
\label{eq:RK_Siegel_explicit}
&\qquad \times\sum_{s=0}^{m-1}%
(-1)^s \binom{m-1}{s} \frac{\Gamma(\al+m+n+s)}{\Gamma(\al+s+1)}
\left(\frac{(\Im(\xi_n)-|\xi'|^2)%
(\Im(\eta_n)-|\eta'|^2)}%
{\left|\frac{\xi_n-\conju{\eta_n}}{2\iu}%
-\langle \xi',\eta'\rangle\right|^2}\right)^s.
\end{align}

\end{remark}

\begin{remark}
In the case $n=1$, i.e., for the upper halfplane $\bH_1$, formula~\eqref{eq:RK_Siegel} simplifies to
\begin{equation}\label{eq:RK_halfplane}
\wt{K}_\xi(\eta)
= \frac{\left(\frac{\xi-\conju{\eta}}{2\iu}\right)^{m-1}}%
{\left(\frac{\eta-\conju{\xi}}{2\iu}\right)^{m+\al+1}}
R_{m-1}^{(\al,0)}\left(\frac{|\xi-\eta|^2}{|\conju{\xi}-\eta|^2}\right) \qquad (\xi,\eta\in\bH_1).
\end{equation}
In particular, for $\al=0$, this expression coincides with formula~\cite[Corollary 2.5]{Pessoa2013} obtained by another method.
\end{remark}

\begin{remark}\label{rem:biholomophic_changes_of_variables_in_Siegel}
Generalizing ideas of this paper, it is possible to associate a unitary operator
(namely, a certain weighted shift)
in $\cA_m^2(\bH_n,\nu_\al)$ to every biholomorphism of the Siegel domain $\bH_n$.
In particular, using~\eqref{eq:RK_Siegel}, we have verified that the space $\cA_m(\bH_n,\nu_\al)$
is invariant under the unweighted changes of variables, corresponding to the quasi-parabolic, nilpotent, and quasi-nilpotent groups from~\cite[Section~3]{QuirogaVasilevski2007}.
\end{remark}

\bigskip

\section*{Acknowledgements}

The authors have been
partially supported by
Proyecto CONACYT
``Ciencia de Frontera'' FORDECYT-PRONACES/61517/2020,
by CONACYT (Mexico) scholarships,
and by SIP-IPN projects (Instituto Polit\'{e}cnico Nacional, Mexico).
We are grateful to professor Nikolai Vasilevski for inviting us to this area of mathematics and to professor Armando S\'{a}nchez-Nungaray for the term ``homogeneously polyanalytic function''.

\medskip\noindent
Christian Rene Leal-Pacheco\newline
Centro de Investigaci\'{o}n y de Estudios Avanzados
del Instituto Polit\'{e}cnico Nacional\newline
Departamento de Matem\'{a}ticas\newline
Apartado Postal 07360\newline
Ciudad de M\'{e}xico\newline
Mexico\newline
e-mail: christian.leal.pacheco@gmail.com\newline
\myurl{https://orcid.org/0000-0001-5738-4904}

\bigskip\noindent
Egor A. Maximenko\newline
Instituto Polit\'{e}cnico Nacional\newline
Escuela Superior de F\'{i}sica y Matem\'{a}ticas\newline
Apartado Postal 07730\newline
Ciudad de M\'{e}xico\newline
Mexico\newline
e-mail: egormaximenko@gmail.com\newline
\myurl{https://orcid.org/0000-0002-1497-4338}

\bigskip\noindent
Gerardo Ramos-Vazquez\newline
Centro de Investigaci\'{o}n y de Estudios Avanzados
del Instituto Polit\'{e}cnico Nacional\newline
Departamento de Matem\'{a}ticas\newline
Apartado Postal 07360\newline
Ciudad de M\'{e}xico\newline
Mexico\newline
e-mail: ger.ramosv@gmail.com\newline
\myurl{https://orcid.org/0000-0001-9363-8043}

\end{document}